\documentclass[preprint,12pt]{elsarticle}

\usepackage{amssymb}
\usepackage{amsmath}

\usepackage{amsfonts}
\usepackage{amsthm}
\usepackage{graphicx}
\usepackage{bm}
\usepackage{dsfont}

\journal{}

\numberwithin{equation}{section}
\newtheorem{theorem}{Theorem}[section]
\newtheorem{lemma}{Lemma}[section]
\newtheorem{corollary}{Corollary}[section]

\newtheorem{remark}{Remark}[section]
\usepackage{todonotes}
\let\vec\bm

\makeatletter
\newcommand{\mat}[1]{{\mathpalette\mat@{#1}}}
\newcommand{\mat@}[2]{%
  \begingroup
  \sbox\z@{$\m@th#1\underline{#2}$}%
  \dimen@=\dp\z@ \advance\dimen@ -2\mat@dimen{#1}%
  \dp\z@=\dimen@
  \sbox\z@{$\m@th\underline{\box\z@}$}%
  \box\z@
  \endgroup
}
\newcommand\mat@dimen[1]{%
  \fontdimen8
  \ifx#1\displaystyle\textfont\else
  \ifx#1\textstyle\textfont\else
  \ifx#1\scriptstyle\scriptfont\else
  \scriptscriptfont\fi\fi\fi 3
}
\makeatother
\let \vec \bm

\usepackage{verbatim}

\usepackage{caption}
\usepackage{subcaption}
\usepackage{fullpage} 
\usepackage{adjustbox}

\usepackage{hyperref}
\usepackage{cleveref}
\usepackage{mathtools}

\crefname{equation}{}{}
\Crefname{equation}{Equation}{Equations}
\crefname{lemma}{Lemma}{Lemmas}
\crefname{theorem}{Theorem}{Theorems}
\crefname{corollary}{Corollary}{Corollaries}
\crefname{figure}{Figure}{Figures}
\Crefname{figure}{Figure}{Figures}
\crefname{table}{Table}{Tables}
\Crefname{table}{Table}{Tables}
\crefname{remark}{Remark}{Remarks}

\usepackage{stmaryrd} 
\usepackage{pgf,tikz}
\usetikzlibrary{spy,matrix, calc, arrows,shapes,decorations}
\usepackage{tkz-euclide}
\usetikzlibrary{positioning,arrows,calc,decorations.markings,math,arrows.meta}
\usepackage{pgfplots} 
\usepgfplotslibrary{groupplots}
\usepackage{csvsimple} 
\usepackage{siunitx} 
\usepackage{tabularx}
\usepackage{booktabs}
\usepackage{multirow}
\usepackage[normalem]{ulem}
\useunder{\uline}{\ul}{}

\pgfplotscreateplotcyclelist{unfmixed1}{%
 teal,every mark/.append style={solid,fill=teal!80!black},mark=*,mark size=2.5pt\\
 red!70!white,every mark/.append style={solid,fill=red!80!black},mark=pentagon*,mark size=2.5pt\\
cyan!60!black,every mark/.append style={solid,fill=cyan!80!black},mark=diamond*,mark size=2.5pt\\
}

\pgfplotscreateplotcyclelist{unfmixed2}{%
teal,every mark/.append style={solid,fill=teal!80!black},mark=*,mark size=2.5pt,line width=1.15pt\\
orange,every mark/.append style={solid,fill=orange!80!black},mark=square*,mark size=2.5pt,line width=1.15pt\\
teal,every mark/.append style={solid,fill=teal!80!black},mark=*,mark size=2.5pt,dashed,line width=1.15pt\\
orange,every mark/.append style={solid,fill=orange!80!black},mark=square*,mark size=2.5pt,dashed,line width=1.15pt\\
}

\pgfplotscreateplotcyclelist{unfmixed3}{%
 orange,every mark/.append style={solid,fill=orange!80!black},mark=square*,mark size=2.5pt\\
 teal,every mark/.append style={solid,fill=teal!80!black},mark=*,mark size=2.5pt\\
 cyan!60!black,every mark/.append style={solid,fill=cyan!80!black},mark=diamond*,mark size=2.5pt\\
 red!70!white,every mark/.append style={solid,fill=red!80!black},mark=pentagon*,mark size=2.5pt\\
 green,every mark/.append style={solid,fill=green!80!black},mark=triangle*,mark size=2.5pt\\
}

\pgfplotscreateplotcyclelist{colors1}{%
teal,dashed,every mark/.append style={solid,fill=teal!80!black},mark=*,mark size=2.5pt\\
 red!70!white,dashed,every mark/.append style={solid,fill=red!80!black},mark=pentagon*,mark size=2.5pt\\
 green,dashed,every mark/.append style={solid,fill=green!80!black},mark=triangle*,mark size=2.5pt\\
cyan!60!black,dashed,every mark/.append style={solid,fill=cyan!80!black},mark=diamond*,mark size=2.5pt\\
 teal,every mark/.append style={solid,fill=teal!80!black},mark=*,mark size=2.5pt\\
 red!70!white,every mark/.append style={solid,fill=red!80!black},mark=pentagon*,mark size=2.5pt\\
 green,every mark/.append style={solid,fill=green!80!black},mark=triangle*,mark size=2.5pt\\
cyan!60!black,every mark/.append style={solid,fill=cyan!80!black},mark=diamond*,mark size=2.5pt\\
}

\pgfplotscreateplotcyclelist{colors2}{%
black,every mark/.append style={solid,fill=teal!80!black},mark=*,mark size=2.5pt\\
violet,every mark/.append style={solid,fill=violet!80!black},mark=square*,mark size=2.5pt\\
teal,every mark/.append style={solid,fill=teal!80!black},mark=*,mark size=2.5pt\\
red!70!white,every mark/.append style={solid,fill=red!80!black},mark=pentagon*,mark size=2.5pt\\
green,every mark/.append style={solid,fill=green!80!black},mark=triangle*,mark size=2.5pt\\
cyan!60!black,every mark/.append style={solid,fill=cyan!80!black},mark=diamond*,mark size=2.5pt\\
orange,every mark/.append style={solid,fill=orange!80!black},mark=square*,mark size=2.5pt\\
}

\pgfplotscreateplotcyclelist{colors3}{%
teal,every mark/.append style={solid,fill=teal!80!black},mark=*,mark size=2.5pt\\
orange,every mark/.append style={solid,fill=orange!80!black},mark=square*,mark size=2.5pt\\
cyan!60!black,every mark/.append style={solid,fill=cyan!80!black},mark=pentagon*,mark size=2.5pt\\
}

\pgfplotscreateplotcyclelist{paulcolors}{%
violet,every mark/.append style={solid,fill=violet},mark=square*,very thick,mark size=3pt\\%
teal,every mark/.append style={solid,fill=teal},mark=*,very thick,mark size=3pt\\%
orange,every mark/.append style={solid,fill=orange},mark=diamond*,very thick,mark size=3pt\\%
violet,densely dashed,every mark/.append style={solid,fill=violet},mark=square*,very thick,mark size=3pt\\%
teal,densely dashed,every mark/.append style={solid,fill=teal},mark=*,very thick,mark size=3pt\\%
orange,densely dashed,every mark/.append style={solid,fill=orange},mark=diamond*,very thick,mark size=3pt\\%
violet,dash dot,every mark/.append style={solid,fill=violet},mark=square*,very thick,mark size=3pt\\%
teal,dash dot,every mark/.append style={solid,fill=teal},mark=*,very thick,mark size=3pt\\%
orange,dash dot,every mark/.append style={solid,fill=orange},mark=diamond*,very thick,mark size=3pt\\%
}
\pgfplotscreateplotcyclelist{paulcolors2}{%
teal,every mark/.append style={solid,fill=teal},mark=*,very thick,mark size=3pt\\%
orange,every mark/.append style={solid,fill=orange},mark=diamond*,very thick,mark size=3pt\\%
teal,densely dashed,every mark/.append style={solid,fill=teal},mark=*,very thick,mark size=3pt\\%
orange,densely dashed,every mark/.append style={solid,fill=orange},mark=diamond*,very thick,mark size=3pt\\%
teal,dash dot,every mark/.append style={solid,fill=teal},mark=*,very thick,mark size=3pt\\%
orange,dash dot,every mark/.append style={solid,fill=orange},mark=diamond*,very thick,mark size=3pt\\%
}
\pgfplotscreateplotcyclelist{paulcolors1}{%
teal,every mark/.append style={solid,fill=teal},mark=*,very thick,mark size=3pt\\%
}
\pgfplotscreateplotcyclelist{paulcolorsfill}{%
teal,fill=teal,fill opacity=0.5,thick\\
orange,fill=orange,fill opacity=0.5,thick\\
violet,fill=violet,fill opacity=0.5,thick\\
}

\pgfplotscreateplotcyclelist{barcolors}{%
teal,fill=teal,fill opacity=0.5,thick\\
teal,fill=teal,fill opacity=0.5,thick,postaction={pattern=crosshatch dots,pattern color=teal}\\
orange,fill=orange,fill opacity=0.5,thick\\
orange,fill=orange,fill opacity=0.5,thick,postaction={pattern=crosshatch dots, pattern color=orange}\\
}

\pgfplotscreateplotcyclelist{barcolors2}{%
teal,fill=teal,fill opacity=0.5,thick\\
orange,fill=orange,fill opacity=0.5,thick\\
teal,fill=teal,fill opacity=0.5,thick,postaction={pattern=crosshatch dots,pattern color=teal}\\
orange,fill=orange,fill opacity=0.5,thick,postaction={pattern=crosshatch dots, pattern color=orange}\\
}

\pgfplotscreateplotcyclelist{barcolors3}{%
teal,fill=teal,fill opacity=0.7,thick,postaction={pattern=crosshatch dots,pattern color=teal}\\
orange,fill=orange,fill opacity=0.7,thick,postaction={pattern=crosshatch dots, pattern color=orange}\\
}

\pgfplotsset{
    discard if not/.style 2 args={
        x filter/.append code={
            \edef\tempa{\thisrow{#1}}
            \edef\tempb{#2}
            \ifx\tempa\tempb
            \else
                
            \fi
        }
    },
    my legend style/.style={
            legend entries={
                blue plot,
                red plot,
                green plot
            },
            legend style={
                at={([yshift=2pt]0,1)},
                anchor=south west,
            },
            legend columns=4,
    }
}
\pgfplotsset{compat=1.16}

\pgfplotsset{tick label style={font=\small},label style={font=\small},legend style={font=\small},}
\pgfplotsset{ width=.49\linewidth}

\usepackage{listings}
\usepackage[plain]{algorithm}

\definecolor{maroon}{cmyk}{0, 0.87, 0.68, 0.32}
\definecolor{halfgray}{gray}{0.55}
\definecolor{ipython_frame}{RGB}{207, 207, 207}
\definecolor{ipython_bg}{RGB}{247, 247, 247}
\definecolor{ipython_red}{RGB}{186, 33, 33}
\definecolor{ipython_green}{RGB}{0, 128, 0}
\definecolor{ipython_cyan}{RGB}{64, 128, 128}
\definecolor{ipython_purple}{RGB}{170, 34, 255}

\lstdefinelanguage{iPython}{
    morekeywords=[2]{abs,all,any,basestring,bin,bool,bytearray,callable,chr,classmethod,cmp,compile,complex,delattr,dict,dir,divmod,enumerate,eval,execfile,file,filter,float,format,frozenset,getattr,globals,hasattr,hash,help,hex,id,input,int,isinstance,issubclass,iter,len,list,locals,long,map,max,memoryview,min,next,object,oct,open,ord,pow,property,range,raw_input,reduce,reload,repr,reversed,round,set,setattr,slice,sorted,staticmethod,str,sum,super,tuple,type,unichr,unicode,vars,xrange,zip,apply,buffer,coerce,intern},%
    sensitive=true,%
    morecomment=[l]\#,%
    morestring=[b]',%
    morestring=[b]",%
    morestring=[s]{'''}{'''},
    morestring=[s]{"""}{"""},
    morestring=[s]{r'}{'},
    morestring=[s]{r"}{"},%
    morestring=[s]{r'''}{'''},%
    morestring=[s]{r"""}{"""},%
    morestring=[s]{u'}{'},
    morestring=[s]{u"}{"},%
    morestring=[s]{u'''}{'''},%
    morestring=[s]{u"""}{"""},%
    literate=
    {á}{{\'a}}1 {é}{{\'e}}1 {í}{{\'i}}1 {ó}{{\'o}}1 {ú}{{\'u}}1
    {Á}{{\'A}}1 {É}{{\'E}}1 {Í}{{\'I}}1 {Ó}{{\'O}}1 {Ú}{{\'U}}1
    {à}{{\`a}}1 {è}{{\`e}}1 {ì}{{\`i}}1 {ò}{{\`o}}1 {ù}{{\`u}}1
    {À}{{\`A}}1 {È}{{\'E}}1 {Ì}{{\`I}}1 {Ò}{{\`O}}1 {Ù}{{\`U}}1
    {ä}{{\"a}}1 {ë}{{\"e}}1 {ï}{{\"i}}1 {ö}{{\"o}}1 {ü}{{\"u}}1
    {Ä}{{\"A}}1 {Ë}{{\"E}}1 {Ï}{{\"I}}1 {Ö}{{\"O}}1 {Ü}{{\"U}}1
    {â}{{\^a}}1 {ê}{{\^e}}1 {î}{{\^i}}1 {ô}{{\^o}}1 {û}{{\^u}}1
    {Â}{{\^A}}1 {Ê}{{\^E}}1 {Î}{{\^I}}1 {Ô}{{\^O}}1 {Û}{{\^U}}1
    {œ}{{\oe}}1 {Œ}{{\OE}}1 {æ}{{\ae}}1 {Æ}{{\AE}}1 {ß}{{\ss}}1
    {ç}{{\c c}}1 {Ç}{{\c C}}1 {ø}{{\o}}1 {å}{{\r a}}1 {Å}{{\r A}}1
    {€}{{\EUR}}1 {£}{{\pounds}}1
    {^}{{{\color{ipython_purple}\^{}}}}1
    {=}{{{\color{ipython_purple}=}}}1
    {+}{{{\color{ipython_purple}+}}}1
    {*}{{{\color{ipython_purple}$^\ast$}}}1
    {/}{{{\color{ipython_purple}/}}}1
    {+=}{{{+=}}}1
    {-=}{{{-=}}}1
    {*=}{{{$^\ast$=}}}1
    {/=}{{{/=}}}1,
    literate=
    *{-}{{{\color{ipython_purple}-}}}1
     {?}{{{\color{ipython_purple}?}}}1,
    identifierstyle=\color{black}\ttfamily,
    commentstyle=\color{ipython_cyan}\ttfamily,
    stringstyle=\color{ipython_red}\ttfamily,
    keepspaces=true,
    showspaces=false,
    showstringspaces=false,
    rulecolor=\color{ipython_frame},
    framexleftmargin=0mm,
    numbers=left,
    numberstyle=\tiny\color{halfgray},
    numbersep=1mm,
    xleftmargin=1mm,
    basicstyle=\scriptsize,
    keywordstyle=\color{ipython_green}\ttfamily,
}

\lstdefinestyle{trefftzy}{
    language=iPython,
    emptylines=1,
    breaklines=true,
    basicstyle=\footnotesize\ttfamily\color{black},    
    moredelim=**[is][\color{teal}]{<}{>},
    moredelim=**[is][\color{purple}]{'}{'},
}

\renewcommand{\d}[1]{\,\mathrm{d}{#1}}

\let\div\undefined
\DeclareMathOperator{\div}{div}

\DeclareMathOperator{\tr}{tr}
\DeclareMathOperator{\sym}{sym}
\DeclareMathOperator{\anti}{skw}
\DeclareMathOperator{\curl}{curl}
\DeclareMathOperator{\Ra}{Ra}
\DeclareMathOperator{\symgrad}{\mat{\varepsilon}}

\begin{document}

\begin{frontmatter}



\title{Achieving Material Robustness via Symmetric Stress Finite Element 
Discretizations \tnoteref{t1}}

\tnotetext[t1]{This project has received funding through the UKRI Digital 
Research Infrastructure Programme through the Science and Technology Facilities 
Council's Computational Science Centre for Research
Communities (CoSeC). CP was supported by an appointment to the NRC Research
Associateship Program at the U.S. Naval Research Laboratory, 
administered by the Fellowships Office of the National Academies of Sciences, 
Engineering, and Medicine. 
UZ gratefully acknowledges the Erwin Schrödinger International Institute for 
Mathematics and Physics (ESI) for supporting the stay in Vienna where this 
paper was completed.
Distribution Statement A.  Approved for public release: distribution is unlimited.}


\author[Oxford]{Pablo Brubeck}
\author[NRL]{Charles Parker}
\author[Oxford]{Umberto Zerbinati} 
\affiliation[Oxford]{organization={University of Oxford},
            addressline={Mathematics Institute, Woodstock Road},
            city={Oxford},
            postcode={OX2 6GG},
            state={Oxfordshire},
            country={United Kingdom}}
\affiliation[NRL]{organization={U.S. Naval Research Laboratory},
            addressline={4555 Overlook Ave. S.W.},
            city={Washington},
            postcode={20375},
            state={DC},
            country={USA}}

\begin{abstract}
	When discretizing symmetric stress tensors in variational problems
	arising in continuum mechanics, one has to choose 
	how to enforce the symmetry of the stress tensor: (i) strongly
	by requiring the discrete tensors to be pointwise symmetric or 
	(ii) weakly by introducing a Lagrange multiplier. For 
	$H(\div)$-conforming finite element discretizations
	of Hellinger--Reissner elasticity and velocity--stress formulations
	of incompressible flow, where symmetry of the Cauchy stress tensor
	is tied to the conservation of angular momentum, we show that 
	this choice may substantially impact the accuracy of the numerical scheme.
	Through a series of benchmark problems featuring anisotropic constitutive 
	laws inspired by fiber reinforced material, liquid crystal polymer networks, 
	and polar fluids, we show that schemes enforcing symmetry weakly can yield 
	arbitrarily poor stress approximations 
	-- even for zero-stress configurations. However,
	schemes enforcing symmetry strongly deliver accurate stress approximations 
	independently of the constitutive law, a property we term 
	\emph{material robustness}.
	We present a unifying theory that rigorously explains this behavior.
\end{abstract}

\begin{keyword}
	finite element 
	\sep elasticity
	\sep incompressible flow 
	\sep stress elements

	\MSC[2020] 65N30 \sep 74S05 \sep 76M10  
\end{keyword}

\end{frontmatter}

\section{Introduction}
\label{sec:intro}

Variational formulations appearing in many applications involve 
spaces of symmetric tensors. Perhaps the most well-known is the 
Hellinger-Reissner formulation of elasticity\footnote{See \cite{Eugster2022} 
for an English translation of Hellinger's work \cite{Hellinger13}} 
\cite{Hellinger13,Reissner50,Reissner53}, 
where the Cauchy and second Piola-Kirchhoff stress tensors
are symmetric, corresponding to the preservation of angular momentum
\footnote{Conservation of angular momentum is not a direct consequence of the
symmetry of the Cauchy stress tensor alone as we will see in greater detail
later. Yet, for the specific constitutive relations of the
Hellinger--Reissner formulation, the symmetry of the Cauchy stress tensor
indeed corresponds to the conservation of angular momentum.}.
Other examples include the linearized strain field and the 
(right) Cauchy-Green tensors in intrinsic formulations of elasticity 
\cite{Ciarlet09}, the bending-moments tensor in a mixed formulation of 
Reissner-Mindlin plates \cite{Sky23}, the elasticity tensor in linear 
Cosserat elasticity \cite{Dziubek25,Neff09}, and 
the viscous stress tensor in incompressible flow \cite{Gopalakrishnan20}
and in multicomponent convection-diffusion \cite{Aznaran25}, to name a few. 

When designing discretizations for these problems, one must 
decide whether to enforce symmetry of the tensor strongly (pointwise) or 
weakly (e.g. with a Lagrange multiplier). For $H(\div)$-conforming 
finite element discretizations of symmetric tensors, 
schemes with weak symmetry  
are often preferred over those with strong symmetry in the literature, largely
owing to the perceived complexity of symmetric 
$H(\div)$-conforming tensor elements. Indeed,
many strongly symmetric tensor elements possess supersmoothness 
at mesh vertices and/or edges 
\cite{AdamsCockburn05,ArnoldAwanouWinther08,ArnoldWinther02,%
HuZhang15d2,HuZhang15d3} or 
are macroelements 
\cite{ChenHuang25,Gong23,Gopalakrishnan25,JohnsonMercier78,WatwoodHartz68}
or both \cite{Christiansen24}. 
In contrast, schemes with weak symmetry 
\cite{AmaraThomas79,ArnoldBrezziDouglas84,ArnoldFalkWinther07,%
BoffiBrezziFortin08,CockburnGopalakrishnanGuzman10,FarhloulFortin97,%
GopalakrishnanGuzman12,Stenberg88}
involve more familiar finite element 
spaces that are readily available in software packages. 
Generally, weak enforcement of 
symmetry is not equivalent to strong enforcement unless the finite 
element spaces are carefully chosen \cite{GopalakrishnanGuzman12}, but
quasioptimal error estimates for the Hellinger-Reissner problem
are available for most of the above schemes;
see also \cite{Lederer23,Lederer24} for a unified analysis of schemes with 
strong \cite{Lederer23} and weak symmetry \cite{Lederer24}.

We revisit the importance of strongly versus weakly enforcing the symmetry 
of the (Cauchy) stress tensor in the context of Hellinger-Reissner formulations 
of linear elasticity and stress-velocity formulations of incompressible flow. 
In these contexts, symmetry of the stress tensor
is intimately related to the preservation of angular momentum. Preserving 
physical constraints in discretizations can be crucial in applications. 
For example, in incompressible flow, discretizations that are 
pointwise divergence-free lead to so-called \textit{pressure-robust}
error estimates, while schemes that are only weakly divergence-free 
can produce arbitrarily poor velocity approximations to problems 
with a smooth flow (including no flow) --- see 
\cite{JohnLinkeMerdonNeilanRebholz17} for a review and 
\cref{sec:pressure-robustness} for an example. However, the importance
of the symmetry constraint in our setting appears to be unaddressed in the 
literature.

We develop benchmark problems with materials with a variety of  
constitutive laws, 
including anisotropic settings inspired by liquid crystal polymer networks 
and polar fluids, that demonstrate that schemes with weakly imposed symmetry
can produce arbitrarily poor stress approximations, even for zero stress 
configurations. However, most schemes with strongly imposed symmetry 
do not suffer from this phenomenon, regardless of the constitutive law, 
a property we call \textit{material robustness}. 
Fortunately, symmetric tensor elements
and other ``exotic'' finite elements have become increasingly available 
in open source software packages such as NGSolve 
\cite{SchoberNetgenl97,SchoberlNGSolve14,Sky23} and 
Firedrake \cite{Aznaran22,BrubeckKirby25,FiredrakeUserManual},
which both implement 
the 2D Hu-Zhang element \cite{HuZhang15d2},
while the 2D Arnold-Winther element \cite{ArnoldWinther02}
and the 2D and 3D Johnson-Mercier-K\v{r}\'{i}\v{z}ek element 
\cite{JohnsonMercier78,Krizek82,WatwoodHartz68} are also 
available in Firedrake (see \cite{BrubeckKirby25}
for a list of other available exotic elements).
We additionally develop an abstract
theory to unify the construction and behavior of these examples.
These insights advocate for the 
broader adoption of material robust methods
in computational continuum mechanics, especially in applications sensitive to 
rotational invariants.

The remainder of the paper is organized as follows.
In \cref{sec:general-setup}, we introduce a general setup for 
Hellinger-Reissner problems that we consider. 
We then present a series of numerical examples in \cref{sec:examples} to 
demonstrate the discrepancies of enforcing symmetry 
strongly versus weakly that motivate our notion of material robustness. 
Abstract analysis for general saddle point problems
is developed in \cref{sec:theory} to explain the behavior 
observed in the numerical examples. Finally, we present 
in \cref{sec:conclusions} a transient example to show how the observations 
in \cref{sec:examples} for static problems carry over to the time-dependent 
setting.

\section{General problem setup}
\label{sec:general-setup}

Consider the balance law for a generic continuum in 
Eulerian coordinates 
\cite[Result 5.5, Result 7, \& Exercise 5.7]{GonzalezStuart08} in an open 
domain $\Omega \subset \mathbb{R}^3$:
\begin{subequations}
	\label{eq:general_balance_law}
	\begin{align}
		\label{eq:balance}
		\partial_t \rho + \nabla \cdot (\rho \vec{u}) &= 0,\\
		\label{eq:linear_momentum}
		\rho \left(\partial_t \vec{u}  + \vec{u} \cdot \nabla \vec{u}\right) 
			- \nabla \cdot \mat{\sigma} &= \rho \vec{f},\\
		\label{eq:angular_momentum}
		\rho \left(\partial_t \vec{\eta} + \vec{u} \cdot \nabla \vec{\eta}\right) 
			- \nabla \cdot \mat{\zeta} &= \vec{\xi} + \rho \vec{\tau},
	\end{align}	
\end{subequations}
where $\rho$ is the density, $\vec{u}$ is the velocity, $\vec{\eta}$ is the 
angular momentum, $\mat{\sigma}$ is the 
Cauchy stress tensor, $\mat{\zeta}$ is the couple stress tensor 
\footnote{The couple stress tensor is the analog of the Cauchy stress 
tensor but with respect to body torque rather than force \cite{GonzalezStuart08}, 
in the same way that the angular momentum and the body torque are analogous to 
the velocity and the body force respectively.}, $\vec{f}$ is 
the body force, $\vec{\tau}$ is an external body torque and $\vec{\xi}$ is the 
vectorified form of the antisymmetric part of the stress tensor, i.e.
\begin{equation}
	\vec{\xi} = \begin{pmatrix}
		\sigma_{32} - \sigma_{23}, 
		& \sigma_{13} - \sigma_{31}, 
		& \sigma_{21} - \sigma_{12}
	\end{pmatrix}^{\top}.
\end{equation}
We remark that \eqref{eq:balance} expresses the conservation of mass, 
\eqref{eq:linear_momentum} the conservation of the total linear momentum, 
and \eqref{eq:angular_momentum} the conservation of the angular momentum.
To close the system, 
the balance laws \cref{eq:general_balance_law} are augmented with initial
and boundary conditions and constitutive laws relating the 
stress tensors $\mat{\sigma}$, $\mat{\zeta}$ to the kinematic variables
$\rho$, $\vec{u}$, and $\vec{\eta}$.

It is common to say that the angular momentum is conserved if and only if 
the antisymmetric part of the Cauchy stress tensor vanishes 
($\vec{\xi} \equiv \vec{0}$). 
In absence of body torques and couple stresses 
($\mat{\zeta} \equiv \mat{0}$ and $\vec{\tau} \equiv \vec{0}$), this is indeed 
true, as the angular momentum balance law \cref{eq:angular_momentum} reduces to
\begin{equation}
	\label{eq:angular_momentum_simplified}
	\rho \left(\partial_t \vec{\eta} + \vec{u} \cdot \nabla \vec{\eta}\right) 
	= \vec{\xi},
\end{equation}
and so angular momentum is conserved 
(i.e.~the material time derivative of $\vec{\eta}$ vanishes) if and only if 
$\vec{\xi} \equiv \vec{0}$ (i.e.~$\mat{\sigma} = \mat{\sigma}^{\top}$) --- 
see \cite[Exercise 5.7]{GonzalezStuart08}. In the presence of
couple stresses ($\mat{\zeta} \neq 0$) and/or external body torques 
($\vec{\tau} \neq \vec{0}$), the Cauchy stress tensor may be 
symmetric even though angular momentum is not conserved, or 
the angular momentum may be conserved even though the Cauchy stress tensor
is not symmetric. We proceed below in
a stationary linearized setting where body torques and couple stresses are 
absent, so the symmetry of the Cauchy stress tensor does indeed 
correspond to the conservation of angular momentum.

\subsection{PDE formulation of the linearized static problem}

Let $\Omega \subset \mathbb{R}^d$, $d \in \{2,3\}$, be a Lipschitz
polyhedral domain.
In the three examples below, we assume that the system is in equilibrium and
neglect the body torques and couple stresses. 
For flow problems, we linearize about an equilibrium velocity 
$\vec{u}_0$ with $|\vec{u}_0| \ll 1$ in $\Omega$,
and assume that $\rho \equiv 1$. In 
this case, the conservation of mass \cref{eq:balance}
and linear momentum \cref{eq:linear_momentum} read
\begin{align}
	\label{eq:mass-conservation}
	\nabla \cdot \vec{u} = -\nabla \cdot \vec{u}_0 =: g_{\div}
	\quad \text{and} \quad \nabla \cdot \mat{\sigma} = \vec{f},
\end{align}
where we abuse notation and use $\vec{u}$ to also denote the perturbation 
velocity.
To close \cref{eq:general_balance_law}, we require a constitutive 
law relating the Cauchy stress $\mat{\sigma}$ to 
the symmetric part of the gradient of the velocity 
$\symgrad(\vec{u}) := (\nabla \vec{u} + (\nabla \vec{u})^{\top})/2$. 
Linearizing a generic constitutive law of the form 
\begin{align*}
	\mat{\sigma} = \mat{G}(\mat{\varepsilon}(\vec{u}), q, \tilde{\mat{H}}), 
\end{align*}
where $q$ is the spherical response and $\tilde{\mat{H}}$ is a symmetric tensor
associated with some external field, possibly depending nonlinearly on 
$\vec{u}$ and $q$, often leads to a relation of the form
\begin{align}
	\label{eq:constitutive_law_fluid}
	\mat{\sigma} = 2\mu \mat{\varepsilon}(\vec{u}) - p \mat{I} + \tilde{\mat{F}},
\end{align}
where $\mu > 0$ is a parameter (often the viscosity), 
$p$ is a Lagrange multiplier enforcing the divergence constraint in 
\cref{eq:mass-conservation}, and 
$\tilde{\mat{F}}$ is a linearization of $\tilde{\mat{H}}$.
The physical interpretation of $\tilde{\mat{F}}$ varies depending on 
the continuum we are modelling and will be discussed for each example in 
section \Cref{sec:examples}.

Due to the presence of the term $\tilde{\mat{F}}$ and the nonvanishing bulk 
viscosity, $p$ is neither the mechanical nor the thermodynamic pressure 
\cite{Rajagopal15}. For this reason, it appears compelling from an 
engineering point of view to eliminate such Lagrange multiplier
from our formulation. We take the trace of \eqref{eq:constitutive_law_fluid} to 
obtain
\begin{equation}
	p = \frac{1}{d}\tr\left( 2\mu \symgrad(\vec{u}) 
		+ \tilde{\mat{F}} - \mat{\sigma} \right).
\end{equation}
Substituting this relation back into \cref{eq:constitutive_law_fluid}
gives 
\begin{align*}
\mat{\sigma}^D = 2 \mu \symgrad(\vec{u})^D 
	+ \tilde{\mat{F}}^D 
\implies 
\symgrad(\vec{u})^D = \frac{1}{2\mu} \left( \mat{\sigma}^D 
	- \tilde{\mat{F}}^D \right),
\end{align*}
where $\mat{\sigma}^D := \mat{\sigma} - d^{-1} \tr(\sigma) \mat{\mathbb{I}}$ is 
the deviatoric part of $\mat{\sigma}$.
On noting that
$\tr \symgrad(\vec{u}) = \nabla \cdot \vec{u} = g_{\mathrm{div}}$,
we have
\begin{align}
	\label{eq:eq:constitutive_law_fluid_strain_stress}
\symgrad(\vec{u}) = \frac{1}{2\mu} \mat{\sigma}^D 
	- \left( \frac{1}{2\mu}\tilde{\mat{F}}^D
		- \frac{1}{d} g_{\div} \mat{\mathbb{I}}  \right) 
	=: \frac{1}{2\mu} \mat{\sigma}^D - \mat{F}.
\end{align}

We also consider solid materials linearized about a zero displacement state.
Similar manipulations \cite[Chapter 7.1]{GonzalezStuart08} give rise to 
constitutive relations of the form
\begin{align}
	\label{eq:linear_constituitive_law_intro}
	\symgrad(\vec{u}) = \frac{1}{2\mu} \mat{\sigma}^D 
		+ \frac{1}{d(2\mu + \lambda)} (\tr \mat{\sigma}) \mat{\mathbb{I}} 
		- \mat{F},
\end{align}
where $\vec{u}$ is now the displacement and $\mu, \lambda > 0$ are material
parameters. Since
\cref{eq:eq:constitutive_law_fluid_strain_stress} is precisely of the form 
\cref{eq:linear_constituitive_law_intro} with $\lambda = \infty$,
\cref{eq:general_balance_law}
augmented with the constitutive law \cref{eq:linear_constituitive_law_intro}
becomes the following for both fluids and solids:
\begin{subequations}
	\label{eq:linear_laws_strong}
	\begin{alignat}{2}
		\label{eq:linear_constitutive_law}
		\frac{1}{2\mu} \mat{\sigma}^D + \frac{1}{d(2\mu + d \lambda)} 
			(\tr \mat{\sigma}) \mat{\mathbb{I}} -  \symgrad(\vec{u})
		&= \mat{F}
		\qquad & &\text{in } \Omega, \\
		\label{eq:linear_momentum_balance}
		\nabla \cdot \mat{\sigma} &= \vec{f} \qquad & &\text{in } \Omega,\\
		\label{eq:linear_u_dirichlet}
		\vec{u} &= \vec{g} \qquad & &\text{on } \partial \Omega,
	\end{alignat}	
\end{subequations}
where $\vec{f}$ encodes the external forces and $\vec{g}$ is the 
prescribed value of $\vec{u}$ on $\partial \Omega$. For simplicity,
we assume that $\mat{F}$ is independent of $\vec{u}$ so that 
\cref{eq:linear_laws_strong} is a linear system of partial 
differential equations.
If $\lambda = \infty$, we additionally require the compatibility condition 
$\int_{\partial \Omega} \vec{g} \cdot \vec{n} \d{s} 
= -\int_{\Omega} \tr \mat{F} \d{x}$, which can be seen by 
taking the trace of \cref{eq:linear_constitutive_law}, integrating over
$\Omega$, and applying the divergence theorem.
The formulation \cref{eq:linear_laws_strong} for incompressible flow
($\lambda = \infty$) is sometimes called the stress-velocity formulation 
and, at least in the numerical analysis community, seems to date back to 
\cite{CaiLeeWang04} for the design of least-squares finite element methods.


\begin{remark}
Constitutive laws for linear isotropic material with 
an external anisotropy may also be expressed as
\begin{align}
	\label{eq:linear_elasticity_constituitive_typical}
	\mat{\sigma} &= 2\mu \symgrad(\vec{u}) + \lambda 
		\mathrm{tr}(\symgrad(\vec{u})) \mat{\mathbb{I}} 
		+ \tilde{\mat{F}}.
\end{align}
Taking the trace of \cref{eq:linear_elasticity_constituitive_typical} gives
\begin{align*}
	\tr \mat{\sigma} = (2\mu + d\lambda) \tr \symgrad(\vec{u})
	+ \tr \mat{\tilde{F}}
	\implies
	\symgrad(\vec{u}) = 
	\frac{1}{2\mu}(\mat{\sigma} - \mat{\tilde{F}}) - 
	\frac{\lambda}{2\mu(2\mu + d\lambda)} 
	\tr (\mat{\sigma} - \mat{\tilde{F}}) \mat{\mathbb{I}}. 
\end{align*}
After a little more algebraic manipulation, we arrive at 
\cref{eq:linear_constituitive_law_intro} with 
\begin{align}
	\label{eq:strain-func-stress-anisotropy-from-stress-func-strain}
	\mat{F} = \frac{1}{2\mu} \mat{\tilde{F}}^D 
		+ \frac{1}{d(2\mu + d\lambda)} \tr (\tilde{\mat{F}}) \mat{\mathbb{I}}.
\end{align}
\end{remark}

\subsection{Variational formulation with strong symmetry}
\label{sec:general-setup-strong-sym}

We consider a variational formulation of \cref{eq:linear_laws_strong},
also known as the Hellinger-Reissner formulation 
\cite{Hellinger13,Reissner50,Reissner53},
where $\vec{u}$ and $\mat{\sigma}$ are independent variables. 
We use standard 
notation for Sobolev spaces of scalar-valued functions. 
For vector- or matrix-valued spaces, we include the codomain in the space; e.g.,
$L^2(\Omega; \mathbb{R}^d)$ is the space of square-integrable
vector-valued functions. Other codomains include 
$\mathbb{R}^{d\times d}_{\sym}$ the set of symmetric matrices,
and $\mathbb{R}^{d\times d}_{\anti}$ the set of skew-symmetric matrices.
We also define matrix-valued spaces with square-integrable divergence 
\begin{alignat*}{2}
	H(\div; \Omega, \circ)
		&:= \{ \mat{\tau} \in L^2(\Omega, \circ) : 
				\nabla \cdot \mat{\tau} \in L^2(\Omega, \mathbb{R}^d) \},
		\qquad & &\circ \in \{ \mathbb{R}^{d \times d}, 
			\mathbb{R}_{\sym}^{d \times d} \}, \\
	H_{*}(\div; \Omega, \circ) &:= \left\{ 
		\mat{\tau} \in H(\div; \Omega, \circ) : 
			\int_{\Omega} \tr \mat{\tau} \d{x} = 0 \right\}, 
		\qquad & &\circ \in \{ \mathbb{R}^{d \times d},
			\mathbb{R}_{\sym}^{d \times d} \},
\end{alignat*}
where the divergence of matrix is taken row-wise. 

The Hellinger-Reissner formulation of \cref{eq:linear_laws_strong} then reads
as follows: Find $\mat{\sigma} \in \Sigma^{\sym}$ and $\vec{u} \in V$ such that
\begin{subequations}
	\label{eq:linear_laws_weak_strong_sym}
	\begin{alignat}{2}
		\label{eq:linear_laws_weak_strong_sym_1}
		a(\mat{\sigma}, \mat{\tau}) + b(\mat{\tau}, \vec{u}) 
			&= \langle \mat{\tau} \vec{n}, \vec{g} \rangle_{\partial \Omega}
				+ (\mat{F}, \mat{\tau})_{L^2(\Omega)} 
		\qquad & &\forall \mat{\tau} \in \Sigma^{\sym}, \\
		\label{eq:linear_laws_weak_strong_sym_2}
		b(\mat{\sigma}, \vec{v}) &= (\vec{f}, \vec{v})_{L^2(\Omega)} \qquad & 
		&\forall \vec{v} \in V,
	\end{alignat}	
\end{subequations}
where the spaces are chosen as 
\begin{align}
	\label{eq:sigma-sym-v-def}
	\Sigma^{\sym} := \begin{cases}
		H(\div; \Omega, \mathbb{R}^{d\times d}_{\sym}) 
			& \text{if } \lambda < \infty, \\
		H_*(\div; \Omega, \mathbb{R}^{d\times d}_{\sym}) 
			& \text{if } \lambda = \infty,
	\end{cases}
	\quad \text{and} \quad
	V := L^2(\Omega; \mathbb{R}^d),
\end{align}
and the bilinear forms are given by
\begin{subequations}
	\label{eq:bilinear_forms}
	\begin{align}
	\label{eq:a-bilinear}
	a(\mat{\sigma}, \mat{\tau}) &:= 
		\frac{1}{2\mu} (\mat{\sigma}^D, \mat{\tau}^D)_{L^2(\Omega)} 
		+ \frac{1}{d(2\mu + d \lambda)} 
			(\tr \mat{\sigma}, \tr \mat{\tau})_{L^2(\Omega)},\\
	\label{eq:b-bilinear}
	b(\mat{\sigma}, \vec{v}) &:= 
		(\nabla \cdot \mat{\sigma}, \vec{v})_{L^2(\Omega)}.
\end{align}	
\end{subequations}
The angle brackets $\langle \cdot, \cdot \rangle_{\partial \Omega}$ 
in \cref{eq:linear_laws_weak_strong_sym_1}
denote the duality pairing between $H^{-1/2}(\partial \Omega; \mathbb{R}^d)$
and $H^{1/2}(\partial \Omega; \mathbb{R}^d)$. Since the normal trace
$\mat{\tau} \mapsto \tau \vec{n}|_{\partial \Omega}$ is a bounded operator
from $H(\div; \Omega, \mathbb{R}^{d \times d})$
to $H^{-1/2}(\partial \Omega; \mathbb{R}^d)$, all terms in
\cref{eq:linear_laws_weak_strong_sym} are well-defined
provided that
$\vec{f} \in L^2(\Omega; \mathbb{R}^d)$ and 
$\vec{g} \in H^{1/2}(\partial \Omega; \mathbb{R}^d)$, which
we shall assume for the remainder of the manuscript. 
We review the well-posedness of \cref{eq:linear_laws_weak_strong_sym} in 
\cref{sec:well-posedness-mixed} and note here that 
\cref{eq:linear_laws_weak_strong_sym} admits a unique solution.

A standard conforming Galerkin approximation of 
\cref{eq:linear_laws_weak_strong_sym} reads as follows: 
Find $\mat{\sigma}_h \in \Sigma^{\sym}_h$ and 
$\vec{u}_h \in V_h$ such that
\begin{subequations}
	\label{eq:linear_laws_weak_strong_sym_fem}
	\begin{alignat}{2}
		\label{eq:linear_laws_weak_strong_sym_fem_1}
		a(\mat{\sigma}_h, \mat{\tau}_h) + b(\mat{\tau}_h, \vec{u}_h) 
			&= \langle \mat{\tau}_h \vec{n}, \vec{g} \rangle_{\partial \Omega}
				+ (\mat{F}, \mat{\tau}_h)_{L^2(\Omega)} 
		\qquad & &\forall \mat{\tau}_h \in \Sigma_h^{\sym}, \\
		\label{eq:linear_laws_weak_strong_sym_fem_2}
		b(\mat{\sigma}_h, \vec{v}_h) &= (\vec{f}, \vec{v}_h)_{L^2(\Omega)} 
			\qquad & &\forall \vec{v}_h \in V_h,
	\end{alignat}	
\end{subequations}
where $\Sigma^{\sym}_h \subset \Sigma^{\sym}$ and $V_h \subset V$
are finite dimensional spaces.
\noindent As mentioned in \cref{sec:intro}, conforming finite elements for 
$\Sigma^{\sym}$ have been recently adopted in open source software.
One challenge is constructing a finite
element space $\Sigma^{\sym}_h$ that is both 
$H(\div; \Omega, \mathbb{R}^{d \times d})$-conforming and symmetric.
Another challenge is choosing a space $V_h$ such that
both the discrete kernel coercivity condition
\begin{align}
	\label{eq:a-elliptic-kernel-strong-sym-fem}
	a(\mat{\sigma}_h, \mat{\sigma}_h) 
		\geq \alpha_h^{\sym} \|\mat{\sigma}_h\|_{\div}^2 
		\qquad \forall \mat{\sigma}_h \in 
		\{ \mat{\tau}_h \in \Sigma^{\sym}_h : b(\mat{\tau}_h, \vec{v}_h) = 0 
			\ \forall \vec{v}_h \in V_h  \}
\end{align}
holds for some $\alpha_h^{\sym} > 0$ and the discrete inf-sup condition holds
\begin{align}
	\label{eq:b-infsup-strong-sym-fem}
	\beta_h^{\sym} := \inf_{\vec{v}_h \in V_h} 
		\sup_{\mat{\tau}_h \in \Sigma_h^{\sym}}
		\frac{b(\mat{\tau}_h, \vec{v}_h)}{ \| \mat{\tau}_h \|_{\div} 
			\|\vec{v}_h\| }
	 > 0.
\end{align}
Here, we use $\|\cdot\|_{\div}$ to denote the $H(\div)$ norm
and $\|\cdot\|$ to denote the $L^2$ norm. 
Nevertheless, more elements satisfying these conditions are
appearing in the literature; we refer to the recent manuscripts 
\cite{ChenHuang25,Gopalakrishnan25} and references therein for a
review. The same arguments in the beginning of this section show that
any scheme of the form of \cref{eq:linear_laws_weak_strong_sym_fem} will 
conserve the angular momentum since 
$\mat{\sigma}_h \in \Sigma_{h}^{\mathrm{sym}}$ is pointwise symmetric.

In the numerical examples below, we consider the following
two schemes on a shape regular, conforming simplicial mesh $\mathcal{T}_h$:
\begin{enumerate}
	\item the Johnson-Mercier-K\v{r}\'{i}\v{z}ek (JMK) scheme 
	(see \cite{JohnsonMercier78,WatwoodHartz68} for $d=2$ 
	and \cite{Krizek82} for $d=3$),
	which takes $\Sigma_h^{\sym}$ to be 
	$H(\div; \Omega, \mathbb{R}_{\sym}^{d \times d})$-conforming
	piecewise linear matrices
	on a barycentric refinement of $\mathcal{T}_h$ and 
	$V_h$ to be the space of piecewise constants
	on the barycentrically-refined mesh. See also \cite{Gopalakrishnan25}
	for an extension to any dimension and for reduced elements.

   \item The Hu-Zhang ($\mathrm{HZ}_k$) scheme (see \cite{HuZhang15d2} 
	for $d=2$ and \cite{HuZhang15d3} for $d=3$). For $d=2$, $\Sigma_h^{\sym}$
	is the space of $H(\div; \Omega, \mathbb{R}_{\sym}^{d \times d})$-conforming
	piecewise polynomial matrices of degree $k \geq 3$ that are also continuous
	at element vertices and 
	$V_h = \mathcal{P}_{k-1}(\mathcal{T}_h; \mathbb{R}^2)$, where
	$\mathcal{P}_{k-1}(\mathcal{T}_h; \circ)$ is the space of
	discontinuous piecewise polynomials of degree $k-1$ on $\mathcal{T}_h$
   	over the space $\circ$.
\end{enumerate}

\noindent The finite element spaces in the JMK scheme ($d=2,3$) and the 
$\mathrm{HZ}_k$ schemes ($d=2$) are implemented in the finite element software 
package Firedrake \cite{BrubeckKirby25,FiredrakeUserManual}. We also note 
that both schemes satisfy $\div \Sigma^{\sym}_h = V_h$, a common 
feature for strongly symmetric schemes.


\subsection{Variational formulation with weak symmetry}
\label{sec:general-setup-weak-sym}

A more common technique in the literature is to enforce
the symmetry constraint weakly. More precisely, we remove the
symmetry constraint in $\Sigma^{\sym}$ and instead enforce symmetry with
a Lagrange multiplier: Find $\mat{\sigma} \in \Sigma$, $\vec{u} \in V$,
and $\mat{\omega} \in \Xi$ such that
\begin{subequations}
	\label{eq:linear_laws_weak_weak_sym}
	\begin{alignat}{2}
		\label{eq:linear_laws_weak_weak_sym_1}
		a(\mat{\sigma}, \mat{\tau}) + b(\mat{\tau}, \vec{u}) 
			+ c(\mat{\tau}, \mat{\omega})  
		&= \langle \mat{\tau} \vec{n}, \vec{g} \rangle_{\partial \Omega} 
			+ (\mat{F}, \mat{\tau})_{L^2(\Omega)} \qquad 
			& &\forall \mat{\tau} \in \Sigma, \\
		\label{eq:linear_laws_weak_weak_sym_2}
		b(\mat{\sigma}, \vec{v}) &= (\vec{f}, \vec{v})_{L^2(\Omega)} 
			\qquad & &\forall \vec{v} \in V, \\
		\label{eq:linear_laws_weak_weak_sym_3}
		c(\mat{\sigma}, \mat{\xi}) &= 0 
			\qquad & &\forall \mat{\xi} \in 
			\Xi,
	\end{alignat}	
\end{subequations}
where $V$ is defined in \cref{eq:sigma-sym-v-def},
\begin{align}
	\label{eq:sigma-xi-def}
	\Sigma := \begin{cases}
		H(\div; \Omega, \mathbb{R}^{d\times d}) 
			& \text{if } \lambda < \infty, \\
		H_*(\div; \Omega, \mathbb{R}^{d\times d}) 
			& \text{if } \lambda = \infty,
	\end{cases}
	\quad 
	\Xi := L^2(\Omega, \mathbb{R}_{\anti}^{d \times d}),
	\quad \text{and} \quad
	c(\mat{\tau}, \mat{\xi}) := (\mat{\tau}, \mat{\xi})_{L^2(\Omega)}.
\end{align}
Problem \cref{eq:linear_laws_weak_weak_sym} is also well-posed --- 
see \cref{sec:well-posedness-mixed} for a review. A direct calculation shows
that \cref{eq:linear_laws_weak_strong_sym} and 
\cref{eq:linear_laws_weak_weak_sym} have the same solution.

\begin{remark}
	\label{rem:xi-lag-mult-antisym}
	Note that \cref{eq:linear_laws_weak_weak_sym_3} shows that 
	$\mat{\sigma}$ is pointwise symmetric, and so 
	taking $\tau \in C^{\infty}_0(\Omega; \mathbb{R}^{d \times d}_{\anti})$
	in \cref{eq:linear_laws_weak_weak_sym_1} gives
	\begin{align*}
		0 = a(\mat{\sigma}, \mat{\tau}) + b(\mat{\tau}, \vec{u}) 
			+ c(\mat{\tau}, \mat{\omega}) 
			= (\div \mat{\tau}, \vec{u})_{L^2(\Omega)}
				+ (\mat{\tau}, \mat{\omega})_{L^2(\Omega)}
			= -\langle \anti(\nabla \vec{u}), \mat{\tau} \rangle_{\Omega}
				+ (\mat{\tau}, \mat{\omega})_{L^2(\Omega)},
	\end{align*}
	where we used that $\mat{F}$ is symmetric, 
	$\langle \cdot, \cdot \rangle_{\Omega}$
	denotes the action of the distribution, and $\anti$ denotes the 
	skew-symmetric part of a tensor. Thus, the Lagrange multiplier 
	$\mat{\omega} = \anti(\nabla \vec{u})$, and
	we will refer to $\mat{\omega}$ as the rotation tensor, the tensorization of 
	the ``vorticity'' $\nabla \times \vec{u}$.
\end{remark}

A standard Galerkin formulation of \cref{eq:linear_laws_weak_weak_sym}
reads as follows:
Find $\mat{\sigma}_h \in \Sigma_h$, $\vec{u}_h \in V_h$,
and $\mat{\omega}_h \in \Xi_h$ such that
\begin{subequations}
	\label{eq:linear_laws_weak_weak_sym_fem}
	\begin{alignat}{2}
		\label{eq:linear_laws_weak_weak_sym_fem_1}
		a(\mat{\sigma}_h, \mat{\tau}_h) + b(\mat{\tau}_h, \vec{u}_h) 
			+ c(\mat{\tau}_h, \mat{\omega}_h)  
		&= \langle \mat{\tau}_h \vec{n}, \vec{g} \rangle_{\partial \Omega} 
			+ (\mat{F}, \mat{\tau}_h)_{L^2(\Omega)} \qquad 
			& &\forall \mat{\tau}_h \in \Sigma_h, \\
		\label{eq:linear_laws_weak_weak_sym_fem_2}
		b(\mat{\sigma}_h, \vec{v}_h) &= (\vec{f}, \vec{v}_h)_{L^2(\Omega)} 
			\qquad & &\forall \vec{v}_h \in V_h, \\
		\label{eq:linear_laws_weak_weak_sym_fem_3}
		c(\mat{\sigma}_h, \mat{\xi}_h) &= 0 
			\qquad & &\forall \mat{\xi}_h \in \Xi_h,
	\end{alignat}	
\end{subequations}
where $\Sigma_h \subset \Sigma$, $V_h \subset V$, and $\Xi_h \subset \Xi$
are finite dimensional spaces. $H(\div; \mathbb{R}^{d \times d})$-conforming 
spaces $\Sigma_h$ can be constructed by taking each row of a matrix
in $\Sigma_h$ to be a (vector-valued) 
$\mathrm{H}(\div; \Omega, \mathbb{R}^d)$-conforming 
element --- some examples are below.
Note that $V_h$ and $\Xi_h$ do not have any continuity requirement, so their
construction is also simple. Here, the challenge lies in choosing spaces so that
both the discrete kernel coercivity condition 
\begin{align}
	\label{eq:a-elliptic-kernel-weak-sym-fem}
	a(\mat{\sigma}_h, \mat{\sigma}_h) \geq \alpha_h \|\mat{\sigma}_h\|_{\div}^2 
		\qquad \forall \mat{\sigma}_h \in 
		\{ \mat{\tau}_h \in \Sigma_h : 
			b(\mat{\tau}_h, \vec{v}_h) + c(\mat{\tau}_h, \mat{\xi}_h) = 0 
			\ \forall \vec{v}_h \in V_h, \ \forall \mat{\xi}_h \in \Xi_h  \}
\end{align}
holds for some $\alpha_h > 0$ and the discrete inf-sup condition holds
\begin{align}
	\label{eq:b-infsup-weak-sym-fem}
	\beta_h := \inf_{\substack{\vec{v}_h \in V_h \\ \mat{\xi}_h \in \Xi_h}} 
		\sup_{\mat{\tau}_h \in \Sigma_h}
		\frac{b(\mat{\tau}_h, \vec{v}_h) + c(\mat{\tau}_h, \mat{\xi}_h)}{ 
			\| \mat{\tau}_h \|_{\div} (\|\vec{v}_h\| + \|\mat{\xi}_h\|) }
	 > 0.
\end{align}
Typically, one chooses the spaces so that $\div \Sigma_h = V_h$, and
\cref{eq:a-elliptic-kernel-weak-sym-fem} follows from the corresponding
result on the continuous level. Thus, inf-sup compatibility 
\cref{eq:b-infsup-weak-sym-fem} is the main difficulty. We refer to
Chapter 9 of \cite{BoffiBrezziFortin13} for a review of many available 
choices. Generally, $\mat{\sigma}_h \in \Sigma_h$ given by 
\cref{eq:linear_laws_weak_weak_sym_fem} will not be pointwise symmetric; 
thus, the arguments in the beginning of this section show that 
angular momentum is \textit{not} conserved. However, we do have 
weak conservation in the sense of 
\cref{eq:linear_laws_weak_weak_sym_fem_3}.

In the examples below, we consider the following two schemes defined
on a shape-regular, conforming mesh $\mathcal{T}_h$:

\begin{enumerate}
	\item The PEERS scheme \cite{ArnoldBrezziDouglas84} defined 
	for $d=2$ as follows. 
	For $\ell \geq 1$, let $\mathcal{RT}_{\ell}(\mathcal{T}_h)$ 
	be the standard Raviart-Thomas space \cite{RaviartThomas77},
	$\mathcal{CG}_{\ell}(\mathcal{T}_h)$ the space of continuous piecewise 
	polynomials of degree $\ell$, and $\mathcal{B}_{\ell}(\mathcal{T}_h)$
	the subspace of $\mathcal{CG}_{\ell}(\mathcal{T}_h)$ that vanishes on
	all mesh edges. Then, $\Sigma_h$, $V_h$, and $\Xi_h$ are chosen
	as follows:
	\begin{subequations}
		\label{eq:peers}
		\begin{align}
		\label{eq:peers-stress}
		\Sigma_h &= (\mathcal{RT}_1(\mathcal{T}_h) 
			\oplus \curl \mathcal{B}_{3}(\mathcal{T}_h))^2, \\
		\label{eq:peers-displacement}
		V_h &= \mathcal{P}_{0}(\mathcal{T}_h; \mathbb{R}^2), \\
		\label{eq:peers-anti}
		\Xi_h &= \left\{ \begin{bmatrix}
			0 & \phi \\
			-\phi & 0
			\end{bmatrix} : \phi \in  \mathcal{CG}_{1}(\mathcal{T}_h) \right\},
	\end{align}
	\end{subequations}
	where $\curl$ is a $\pi/2$ counterclockwise rotation of the gradient and
	the notation in \cref{eq:peers-stress} means that the rows of matrices in
	of $\Sigma_h$ belong to 
	$\mathcal{RT}_1(\mathcal{T}_h) \oplus \curl \mathcal{B}_3(\mathcal{T}_h)$. 
	The extension of PEERS to $d=3$ can be found in 
	\cite[Example 9.4.2]{BoffiBrezziFortin13} and to higher-order 
	in \cite[Example 9.4.1]{BoffiBrezziFortin13}.

\item The Arnold-Falk-Winther ($\mathrm{AFW}_k$) scheme 
	\cite{ArnoldFalkWinther07} defined for 
	$d \in \{2, 3\}$ and $k \geq 1$ as follows. For $\ell \geq 1$, let 
	$\mathcal{BDM}_{\ell}(\mathcal{T}_h)$ denote the standard
	Brezzi-Douglas-Marini \cite{BrezziDouglasMarini85} space of order $\ell$.
	Then, $\Sigma_h$, $V_h$, and $\Xi_h$ are chosen
	as follows:
	\begin{align}
		\label{eq:afw}
		\Sigma_h &= \mathcal{BDM}_k(\mathcal{T}_h)^d, 
		\quad
		V_h = \mathcal{P}_{k-1}(\mathcal{T}_h; \mathbb{R}^d),
		\quad \text{and} \quad 
		\Xi_h = \mathcal{P}_{k-1}(\mathcal{T}_h; 
			\mathbb{R}^{d \times d}_{\anti}).
	\end{align}
	At lowest order ($k=1$), this scheme is similar to the 
	hybridized scheme in \cite{AmaraThomas79}.
\end{enumerate}

\begin{remark}
	For all the schemes above, we also incorporate the constraint
	$\int_{\Omega} \tr \mat{\sigma} \d{x} = 0$ into the space 
	$\Sigma_h^{\sym}$ or $\Sigma_h$ if $\lambda = \infty$. However, this is
	only for theoretical purposes. In the implementation, this additional
	constraint is instead applied as a post-processing procedure. 
	See \cref{sec:linear-solvers} for additional discussion.
\end{remark}

\begin{remark}
	Suppose we start with a constitutive law of the form
	\cref{eq:linear_elasticity_constituitive_typical} with $\lambda < \infty$
	and perform the change of variables 
	$\tilde{\mat{\sigma}} = \mat{\sigma}  - \tilde{\mat{F}}$. Then,
	$\tilde{\mat{\sigma}}$ and $\vec{u}$ satisfy the same
	law \cref{eq:linear_constitutive_law} with $\mat{F} \equiv 0$.
	Thus, in this setting, one could always assume without loss 
	of generality that $\mat{F} \equiv 0$. Similarly, if $g_{\div} \equiv 0$,
	one could also eliminate $\mat{F}$ when $\lambda = \infty$.
	However, we do not pursue this simplification as we consider
	$g_{\div} \neq 0$ and want to highlight the potential behavior in the 
	nonlinear setting where such a change of variable may not be possible.
\end{remark}

\begin{remark}
	That we only consider ``enclosed flow'' or ``pure displacement'' 
	boundary conditions $(\vec{u} = \vec{g}$ on $\partial \Omega$)
	is neither essential to the behavior of the methods 
	nor the analysis presented below. If we instead have 
	$\vec{u} = \vec{g}_1$ on $\Gamma_D$ and 
	$\mat{\sigma} \vec{n} = \vec{g}_2$ on $\Gamma_T$ for some 
	partition of the boundary $\partial \Omega = \Gamma_D \cup \Gamma_T$,
	then the condition $\mat{\sigma} \vec{n} = \vec{g}_2$ becomes 
	an essential boundary condition on the stress spaces in 
	\cref{eq:linear_laws_weak_strong_sym_fem,eq:linear_laws_weak_weak_sym_fem}.
 	However, the Firedrake library, which we use for the numerical examples 
	below, does not support the strong imposition of essential boundary 
	conditions for finite elements with supersmoothness, which includes the 
	stress space in the Hu-Zhang scheme.
\end{remark}

\section{Examples}
\label{sec:examples}

We now consider three different anisotropy terms $\mat{F}$ in 
\cref{eq:linear_laws_weak_strong_sym_1} inspired by the constitutive laws
of various materials. In each example, we construct a manufactured solution
so that $\vec{f} \equiv \vec{0}$, $\mat{\sigma} \equiv \mat{0}$, and
$\vec{u} \neq \vec{0}$ but known analytically.
We fix $\mu = 10^{-4}$ and choose $\lambda$ depending on the 
example. 
However, we note that the behavior of the methods highlighted below is 
insensitive to the choice of $\mu$.

We set $\Omega = (0, 1)^d$ and generate an unstructured mesh using 
ngsPETSc \cite{ngsPETSc} with \texttt{maxh} as specified in each
example. These meshes avoid a superconvergence
phenomenon we observed for the PEERS scheme on a structured mesh.
We then measure the convergence of the JMK and or $\mathrm{HZ}_3$ scheme 
described at
the end of \cref{sec:general-setup-strong-sym} for the strongly symmetric 
formulation \cref{eq:linear_laws_weak_strong_sym_fem}
and the PEERS (only if $d=2$) and 
$\mathrm{AFW}_k$ schemes, $k \geq 1$, described at the end of 
\cref{sec:general-setup-weak-sym}, for the weakly symmetric formulation
\cref{eq:linear_laws_weak_weak_sym_fem}. 
We compute the stress errors $\|\mat{\sigma} - \mat{\sigma}_h\|_{\div}$ 
and displacement errors $\|\vec{u} - \vec{u}_h\|$ on a sequence of meshes
uniformly refined by bisecting every edge.
For the weakly symmetric schemes, we also compute the error in the 
rotation tensor $\|\mat{\omega} - \mat{\omega}_h\|$, where we recall 
from \cref{rem:xi-lag-mult-antisym} that
$\mat{\omega} = \anti(\nabla \vec{u})$.

All the examples are implemented using the Firedrake 
\cite{FiredrakeUserManual} library. The code to reproduce the results
can be found at \cite{BrubeckZerbinatiParker26}. 
A discussion of the linear solvers appears in \cref{sec:linear-solvers}.

\subsection{Example 1: Linear isotropic solid in 2D} 
\label{sec:lin_el}

We first consider the case of a linear isotropic solid with no external
anisotropy ($\mat{F} \equiv \mat{0}$). In this case, the only displacement
fields that are stress-free are rigid body motions. Here, we are not 
interested in the nearly incompressible regime, so we set $\lambda = 1$
and $\texttt{maxh}=1/8$.
See \cref{sec:well-posedness-mixed} for some remarks on the stability and
well-posedness of the Hellinger-Reissner formulation in the limit as $\lambda
\to \infty$.

We choose the exact displacement to be the following rigid
body motion 
\begin{equation}
	\label{eq:lin_el_exact}
\vec{u}(x, y) = \delta \begin{bmatrix} -y \\ x \end{bmatrix}
\implies \mat{\sigma} \equiv \mat{0}, \quad  
\vec{g} = \vec{u}|_{\partial \Omega}, \quad \text{and} \quad
\mat{\omega} = \delta \begin{bmatrix}
		0 & 1 \\
		-1 & 0
	\end{bmatrix},
\end{equation}
where $\delta \in \{ 10, 10^3, 10^5\}$ is a scaling parameter that we vary.
Note that the exact rotation tensor satisfies $\mat{\omega} \in \Xi_h$ for both 
the PEERS and the $\mathrm{AFW}_k$ schemes for all $k \geq 1$.

The numerical results are displayed in 
\cref{fig:le2d-errors}. Note that
all three methods produce about the same errors and the displacement
is converging linearly. As we will see 
in \cref{sec:structure-preservation-examples}, 
$\mat{\sigma}_h \equiv \mat{0}$ in exact
arithmetic for each of the schemes. 
Here, the growth in the error as $\delta$ increases arises
from the accumulation of roundoff errors that are proportional to $\delta$.
The same phenomenon is observed for 
discretizations of incompressible flow --- see \cref{sec:pressure-robustness}.

\definecolor{alpha1}{RGB}{100, 143, 255}
\definecolor{alpha2}{RGB}{120, 94, 240}
\definecolor{alpha3}{RGB}{220, 38, 127}
\definecolor{alpha4}{RGB}{254, 97, 0}
\definecolor{alpha5}{RGB}{255, 176, 0}

\pgfplotscreateplotcyclelist{myplots}{
	{mark=*, alpha1, mark options={fill=alpha1}},
	{mark=triangle*, alpha2, mark options={fill=alpha2}},
	{mark=square*, alpha3, mark options={fill=alpha3}},
	{mark=diamond*, alpha4, mark options={fill=alpha4}},
	{mark=pentagon*, alpha5, mark options={fill=alpha5}}
}

\begin{figure}[htbp]
	\centering
	\begin{tikzpicture}
		\centering
		\begin{groupplot}[%
			group style={group size=3 by 3, 
				         horizontal sep=45pt, 
						 vertical sep=50pt},
			width=0.32\linewidth,
			height=0.32\linewidth,
			domain=2:5, 
			xtick={0,1,2,3,4}, 
			ymode=log, 
			xlabel={refinement level},
			ymajorgrids=true,
			grid style=dashed,
			every axis plot/.append style={line width=1.1pt},
			cycle list name=myplots,
		]
		\nextgroupplot[ylabel={JMK}]
		\addplot+[discard if not={Bnd}{10.0}] 
				table [x=ref, y=sigma_error, col sep=comma] 
				{data/rigid_body_motion_jm_1.csv};
				\label{plot:le-jm-hdiv-alpha1}
		\addplot+[discard if not={Bnd}{1000.0}] 
				table [x=ref, y=sigma_error, col sep=comma] 
				{data/rigid_body_motion_jm_1.csv};
				\label{plot:le-jm-hdiv-alpha2}
		\addplot+[discard if not={Bnd}{100000.0}] 
				table [x=ref, y=sigma_error, col sep=comma] 
				{data/rigid_body_motion_jm_1.csv};
				\label{plot:le-jm-hdiv-alpha3}
		\nextgroupplot
		\addplot+[discard if not={Bnd}{10.0}] 
				table [x=ref, y=displacement_error, col sep=comma] 
				{data/rigid_body_motion_jm_1.csv};
		\addplot+[discard if not={Bnd}{1000.0}] 
				table [x=ref, y=displacement_error, col sep=comma] 
				{data/rigid_body_motion_jm_1.csv};
		\addplot+[discard if not={Bnd}{100000.0}] 
				table [x=ref, y=displacement_error, col sep=comma] 
				{data/rigid_body_motion_jm_1.csv};
		\nextgroupplot[group/empty plot]
		\nextgroupplot[ylabel={PEERS}]
		\addplot+[discard if not={Bnd}{10.0}] 
				table [x=ref, y=sigma_error, col sep=comma] 
				{data/rigid_body_motion_peers_1.csv};
		\addplot+[discard if not={Bnd}{1000.0}] 
				table [x=ref, y=sigma_error, col sep=comma] 
				{data/rigid_body_motion_peers_1.csv};
		\addplot+[discard if not={Bnd}{100000.0}] 
				table [x=ref, y=sigma_error, col sep=comma] 
				{data/rigid_body_motion_peers_1.csv};
		\nextgroupplot
		\addplot+[discard if not={Bnd}{10.0}] 
				table [x=ref, y=displacement_error, col sep=comma] 
				{data/rigid_body_motion_peers_1.csv};
		\addplot+[discard if not={Bnd}{1000.0}] 
				table [x=ref, y=displacement_error, col sep=comma] 
				{data/rigid_body_motion_peers_1.csv};
		\addplot+[discard if not={Bnd}{100000.0}] 
				table [x=ref, y=displacement_error, col sep=comma] 
				{data/rigid_body_motion_peers_1.csv};
		\nextgroupplot
		\addplot+[discard if not={Bnd}{10.0}] 
				table [x=ref, y=omega_err, col sep=comma] 
				{data/rigid_body_motion_peers_1.csv};
		\addplot+[discard if not={Bnd}{1000.0}] 
				table [x=ref, y=omega_err, col sep=comma] 
				{data/rigid_body_motion_peers_1.csv};
		\addplot+[discard if not={Bnd}{100000.0}] 
				table [x=ref, y=omega_err, col sep=comma] 
				{data/rigid_body_motion_peers_1.csv};
		\nextgroupplot[ylabel=$\mathrm{AFW}_1$]
		\addplot+[discard if not={Bnd}{10.0}] 
				table [x=ref, y=sigma_error, col sep=comma] 
				{data/rigid_body_motion_afw_1.csv};
		\addplot+[discard if not={Bnd}{1000.0}] 
				table [x=ref, y=sigma_error, col sep=comma] 
				{data/rigid_body_motion_afw_1.csv};
		\addplot+[discard if not={Bnd}{100000.0}] 
				table [x=ref, y=sigma_error, col sep=comma] 
				{data/rigid_body_motion_afw_1.csv};
		\nextgroupplot
		\addplot+[discard if not={Bnd}{10.0}] 
				table [x=ref, y=displacement_error, col sep=comma] 
				{data/rigid_body_motion_afw_1.csv};
		\addplot+[discard if not={Bnd}{1000.0}] 
				table [x=ref, y=displacement_error, col sep=comma] 
				{data/rigid_body_motion_afw_1.csv};
		\addplot+[discard if not={Bnd}{100000.0}] 
				table [x=ref, y=displacement_error, col sep=comma] 
				{data/rigid_body_motion_afw_1.csv};
		\nextgroupplot
		\addplot+[discard if not={Bnd}{10.0}] 
				table [x=ref, y=omega_err, col sep=comma] 
				{data/rigid_body_motion_afw_1.csv};
		\addplot+[discard if not={Bnd}{1000.0}] 
				table [x=ref, y=omega_err, col sep=comma] 
				{data/rigid_body_motion_afw_1.csv};
		\addplot+[discard if not={Bnd}{100000.0}] 
				table [x=ref, y=omega_err, col sep=comma] 
				{data/rigid_body_motion_afw_1.csv};
		\end{groupplot}
		\node at ($(group c1r3.south) + (0,-2)$) 
			{(A) $\|\mat{\sigma}-\mat{\sigma}_h\|_{\div}$};
		\node at ($(group c2r3.south) + (0,-2)$) 
			{(B) $\|\vec{u}-\vec{u}_h\|$};
		\node at ($(group c3r3.south) + (0,-2)$) 
			{(C) $\|\mat{\omega} - \mat{\omega}_h\|$};
	\end{tikzpicture}
	\caption{Numerical results for the 2D linear isotropic solid example
		in \cref{sec:lin_el} for
		$\delta = 10$ (\ref*{plot:le-jm-hdiv-alpha1}),
		$\delta = 10^3$ (\ref*{plot:le-jm-hdiv-alpha2}), and
		$\delta = 10^5$ (\ref*{plot:le-jm-hdiv-alpha3}) with the 
		JMK scheme (first row), the PEERS scheme (second row), and 
		$\mathrm{AFW}_1$ scheme (third row).
		(A) the stress errors in 
			$H(\div; \Omega, \mathbb{R}^{2\times 2}_{\sym})$  
		(B) the displacement errors in 
		$L^2(\Omega; \mathbb{R}^2)$, and 
		(C) $L^2(\Omega; \mathbb{R}_{\anti}^{2 \times 2})$ 
		errors in the Lagrange multiplier $\mat{\omega}$
		(only for the weakly symmetric methods). 
		Observe that three methods produce nearly the same errors.}
	\label{fig:le2d-errors}
\end{figure}

 \subsection{Example 2: Transversely isotropic solids in 2D}
\label{sec:trans_iso}

We now introduce a transverse isotropy by adding a directional contribution 
aligned with a prescribed vector field, 
i.e.~taking $\mat{\tilde{F}} = \delta (\vec{\nu} \otimes \vec{\nu})$
in \cref{eq:linear_elasticity_constituitive_typical},
where the director $\vec{\nu} : \Omega \to \mathbb{R}^2$ specifies the axis of 
transverse isotropy and $\delta > 0$ is a parameter. 
Such material laws arise in colloidal 
suspensions \cite{ericksen} and 
fiber-reinforced materials \cite{MerodioOgden05}.
Note that $\mat{F}$ is then given by 
\cref{eq:strain-func-stress-anisotropy-from-stress-func-strain}.
 
Suppose that we choose exact displacement $\vec{u}$, the director $\vec{\nu}$,
and the boundary data $\vec{g}$ as follows:
\begin{equation}
	\vec{u} = -\frac{\delta}{2\mu} \begin{bmatrix}
		\frac{1}{3}x^3 - \frac{1}{3}y^3\\
		x^2 y + xy^2 + \frac{1}{3} y^3 + \frac{2}{3} x^3
	\end{bmatrix},
	\quad 
	\vec{\nu}(x,y) =  \begin{bmatrix}
		x \\ x+y
	\end{bmatrix},
	\quad \text{and} \quad
	\vec{g} = \vec{u}|_{\partial \Omega},
\end{equation} 
where $\delta \in \{10, 10^3, 10^5\}$ is again a scaling parameter
that we vary. The rotation tensor is given by
\begin{align}
	\mat{\omega} = \frac{\delta(x^2 + xy + y^2)}{2\mu} \begin{bmatrix}
		0 & 1 \\
		-1 & 0
	\end{bmatrix}.
\end{align}
A direct calculation shows that for any choice of $\mu$ and $\delta$,
$\mat{\sigma} = \lambda (\tr \symgrad(\vec{u}))\mat{\mathbb{I}}$,
and so $\mat{\sigma} \equiv \mat{0}$ if and only if $\lambda = 0$. 
Thus, we prescribe $\lambda = 0$ and again set $\texttt{maxh}=1/8$.

The numerical results are displayed in 
\cref{fig:trans2d-errors}. First note that the JMK scheme gives results
that have the same behavior as the previous example. Again, we will see
below in \cref{sec:structure-preservation-examples} 
that for the JMK scheme in exact arithmetic, 
$\mat{\sigma}_h \equiv \mat{0}$. In contrast to the previous example,
the PEERS and $\mathrm{AFW}_1$ schemes produce large
errors in the stress $\mat{\sigma}$ and the rotation tensor $\mat{\omega}$
that also scale like $\delta$. These errors
cannot be explained by the accumulation of roundoff errors or stopping criteria
of the iterative solver. To investigate this phenomenon further, we also
consider the $\mathrm{AFW}_k$ scheme with larger $k$. The results
for $k \in \{1,2,3\}$ are displayed in \cref{fig:trans2d-afw-errors},
which show that at exactly $k=3$, the scheme recovers the behavior in
the previous example. Namely, the stress and rotation tensor errors are
zero up to roundoff
error and solver tolerances. Note that the exact rotation tensor
satisfies $\mat{\omega} \in \Xi_h$ for the $\mathrm{AFW}_k$ scheme for all 
$k \geq 3$, while in the previous example we have $\mat{\omega} \in \Xi_h$
for all the schemes. Thus, the inclusion $\mat{\omega} \in \Xi_h$ seems to
be crucial for the robustness of stress errors with respect
to $\delta$ for the weakly symmetric schemes. This finding will be
confirmed theoretically in \cref{sec:theory}.

\begin{figure}[htbp]
	\centering
	\begin{tikzpicture}
		\centering
		\begin{groupplot}[%
			group style={group size=3 by 3, 
				         horizontal sep=45pt, 
						 vertical sep=50pt},
			width=0.32\linewidth,
			height=0.32\linewidth,
			domain=2:5, 
			xtick={0,1,2,3,4}, 
			ymode=log, 
			xlabel={refinement level},
			ymajorgrids=true,
			grid style=dashed,
			every axis plot/.append style={line width=1.1pt},
			cycle list name=myplots,
		]
		\nextgroupplot[ylabel={JMK}]
		\addplot+[discard if not={Bnd}{10.0}] 
				table [x=ref, y=sigma_error, col sep=comma] 
				{data/transverse_isotropic_jm_1.csv};
				\label{plot:trans-jm-hdiv-alpha1}
		\addplot+[discard if not={Bnd}{1000.0}] 
				table [x=ref, y=sigma_error, col sep=comma] 
				{data/transverse_isotropic_jm_1.csv};
				\label{plot:trans-jm-hdiv-alpha2}
		\addplot+[discard if not={Bnd}{100000.0}] 
				table [x=ref, y=sigma_error, col sep=comma] 
				{data/transverse_isotropic_jm_1.csv};
				\label{plot:trans-jm-hdiv-alpha3}
		\nextgroupplot
		\addplot+[discard if not={Bnd}{10.0}] 
				table [x=ref, y=displacement_error, col sep=comma] 
				{data/transverse_isotropic_jm_1.csv};
		\addplot+[discard if not={Bnd}{1000.0}] 
				table [x=ref, y=displacement_error, col sep=comma] 
				{data/transverse_isotropic_jm_1.csv};
		\addplot+[discard if not={Bnd}{100000.0}] 
				table [x=ref, y=displacement_error, col sep=comma] 
				{data/transverse_isotropic_jm_1.csv};
		\nextgroupplot[group/empty plot]
		\nextgroupplot[ylabel={PEERS}]
		\addplot+[discard if not={Bnd}{10.0}] 
				table [x=ref, y=sigma_error, col sep=comma] 
				{data/transverse_isotropic_peers_1.csv};
		\addplot+[discard if not={Bnd}{1000.0}] 
				table [x=ref, y=sigma_error, col sep=comma] 
				{data/transverse_isotropic_peers_1.csv};
		\addplot+[discard if not={Bnd}{100000.0}] 
				table [x=ref, y=sigma_error, col sep=comma] 
				{data/transverse_isotropic_peers_1.csv};
		\nextgroupplot
		\addplot+[discard if not={Bnd}{10.0}] 
				table [x=ref, y=displacement_error, col sep=comma] 
				{data/transverse_isotropic_peers_1.csv};
		\addplot+[discard if not={Bnd}{1000.0}] 
				table [x=ref, y=displacement_error, col sep=comma] 
				{data/transverse_isotropic_peers_1.csv};
		\addplot+[discard if not={Bnd}{100000.0}] 
				table [x=ref, y=displacement_error, col sep=comma] 
				{data/transverse_isotropic_peers_1.csv};
		\nextgroupplot
		\addplot+[discard if not={Bnd}{10.0}] 
				table [x=ref, y=omega_err, col sep=comma] 
				{data/transverse_isotropic_peers_1.csv};
		\addplot+[discard if not={Bnd}{1000.0}] 
				table [x=ref, y=omega_err, col sep=comma] 
				{data/transverse_isotropic_peers_1.csv};
		\addplot+[discard if not={Bnd}{100000.0}] 
				table [x=ref, y=omega_err, col sep=comma] 
				{data/transverse_isotropic_peers_1.csv};
		\nextgroupplot[ylabel={$\mathrm{AFW}_1$}]
		\addplot+[discard if not={Bnd}{10.0}] 
				table [x=ref, y=sigma_error, col sep=comma] 
				{data/transverse_isotropic_afw_1.csv};
		\addplot+[discard if not={Bnd}{1000.0}] 
				table [x=ref, y=sigma_error, col sep=comma] 
				{data/transverse_isotropic_afw_1.csv};
		\addplot+[discard if not={Bnd}{100000.0}] 
				table [x=ref, y=sigma_error, col sep=comma] 
				{data/transverse_isotropic_afw_1.csv};
		\nextgroupplot
		\addplot+[discard if not={Bnd}{10.0}] 
				table [x=ref, y=displacement_error, col sep=comma] 
				{data/transverse_isotropic_afw_1.csv};
		\addplot+[discard if not={Bnd}{1000.0}] 
				table [x=ref, y=displacement_error, col sep=comma] 
				{data/transverse_isotropic_afw_1.csv};
		\addplot+[discard if not={Bnd}{100000.0}] 
				table [x=ref, y=displacement_error, col sep=comma] 
				{data/transverse_isotropic_afw_1.csv};
		\nextgroupplot
		\addplot+[discard if not={Bnd}{10.0}] 
				table [x=ref, y=omega_err, col sep=comma] 
				{data/transverse_isotropic_afw_1.csv};
		\addplot+[discard if not={Bnd}{1000.0}] 
				table [x=ref, y=omega_err, col sep=comma] 
				{data/transverse_isotropic_afw_1.csv};
		\addplot+[discard if not={Bnd}{100000.0}] 
				table [x=ref, y=omega_err, col sep=comma] 
				{data/transverse_isotropic_afw_1.csv};
		\end{groupplot}
		\node at ($(group c1r3.south) + (0,-2)$) 
			{(A) $\|\mat{\sigma}-\mat{\sigma}_h\|_{\div}$};
		\node at ($(group c2r3.south) + (0,-2)$) {(B) $\|\vec{u}-\vec{u}_h\|$};
		\node at ($(group c3r3.south) + (0,-2)$) 
			{(C) $\|\mat{\omega} - \mat{\omega}_h\|$};
	\end{tikzpicture}
	\caption{Numerical results for the 2D transversely isotropic example
		in \cref{sec:trans_iso} for
		$\delta = 10$ (\ref*{plot:le-jm-hdiv-alpha1}),
		$\delta = 10^3$ (\ref*{plot:le-jm-hdiv-alpha2}), and
		$\delta = 10^5$ (\ref*{plot:le-jm-hdiv-alpha3}) with the 
		JMK, PEERS, and $\mathrm{AFW}_1$ schemes.
		The errors are displayed as in \cref{fig:le2d-errors}. 
		Observe that the stress errors for the PEERS and 
		$\mathrm{AFW}_1$ schemes are no longer zero up to roundoff errors,
      in contrast to
		\cref{fig:le2d-errors}, even though both examples satisfy
		$\mat{\sigma} \equiv \mat{0}$.
      Here, the exact rotation tensor $\mat{\omega}$
      is not included in the discrete space $\Xi_h$ for the weakly symmetric schemes.
   }
	\label{fig:trans2d-errors}
\end{figure}

\begin{figure}[htbp]
	\centering
	\begin{tikzpicture}
		\centering
		\begin{groupplot}[%
			group style={group size=3 by 3, 
				         horizontal sep=45pt, 
						 vertical sep=50pt},
			width=0.32\linewidth,
			height=0.32\linewidth,
			domain=2:5, 
			xtick={0,1,2,3,4}, 
			ymode=log, 
			xlabel={refinement level},
			ymajorgrids=true,
			grid style=dashed,
			every axis plot/.append style={line width=1.1pt},
			cycle list name=myplots,
		]
		\nextgroupplot[ylabel={$\mathrm{AFW}_1$}]
		\addplot+[discard if not={Bnd}{10.0}] 
				table [x=ref, y=sigma_error, col sep=comma] 
				{data/transverse_isotropic_afw_1.csv};
				\label{plot:trans-at-hdiv-alpha1}
		\addplot+[discard if not={Bnd}{1000.0}] 
				table [x=ref, y=sigma_error, col sep=comma] 
				{data/transverse_isotropic_afw_1.csv};
				\label{plot:trans-at-hdiv-alpha2}
		\addplot+[discard if not={Bnd}{100000.0}] 
				table [x=ref, y=sigma_error, col sep=comma] 
				{data/transverse_isotropic_afw_1.csv};
				\label{plot:trans-at-hdiv-alpha3}
		\nextgroupplot
		\addplot+[discard if not={Bnd}{10.0}] 
				table [x=ref, y=displacement_error, col sep=comma] 
				{data/transverse_isotropic_afw_1.csv};
		\addplot+[discard if not={Bnd}{1000.0}] 
				table [x=ref, y=displacement_error, col sep=comma] 
				{data/transverse_isotropic_afw_1.csv};
		\addplot+[discard if not={Bnd}{100000.0}] 
				table [x=ref, y=displacement_error, col sep=comma] 
				{data/transverse_isotropic_afw_1.csv};
		\nextgroupplot
		\addplot+[discard if not={Bnd}{10.0}] 
				table [x=ref, y=omega_err, col sep=comma] 
				{data/transverse_isotropic_afw_1.csv};
		\addplot+[discard if not={Bnd}{1000.0}] 
				table [x=ref, y=omega_err, col sep=comma] 
				{data/transverse_isotropic_afw_1.csv};
		\addplot+[discard if not={Bnd}{100000.0}] 
				table [x=ref, y=omega_err, col sep=comma] 
				{data/transverse_isotropic_afw_1.csv};
		\nextgroupplot[ylabel={$\mathrm{AFW}_2$}]
		\addplot+[discard if not={Bnd}{10.0}] 
				table [x=ref, y=sigma_error, col sep=comma] 
				{data/transverse_isotropic_afw_2.csv};
		\addplot+[discard if not={Bnd}{1000.0}] 
				table [x=ref, y=sigma_error, col sep=comma] 
				{data/transverse_isotropic_afw_2.csv};
		\addplot+[discard if not={Bnd}{100000.0}] 
				table [x=ref, y=sigma_error, col sep=comma] 
				{data/transverse_isotropic_afw_2.csv};
		\nextgroupplot
		\addplot+[discard if not={Bnd}{10.0}] 
				table [x=ref, y=displacement_error, col sep=comma] 
				{data/transverse_isotropic_afw_2.csv};
		\addplot+[discard if not={Bnd}{1000.0}] 
				table [x=ref, y=displacement_error, col sep=comma] 
				{data/transverse_isotropic_afw_2.csv};
		\addplot+[discard if not={Bnd}{100000.0}] 
				table [x=ref, y=displacement_error, col sep=comma] 
				{data/transverse_isotropic_afw_2.csv};
		\nextgroupplot
		\addplot+[discard if not={Bnd}{10.0}] 
				table [x=ref, y=omega_err, col sep=comma] 
				{data/transverse_isotropic_afw_2.csv};
		\addplot+[discard if not={Bnd}{1000.0}] 
				table [x=ref, y=omega_err, col sep=comma] 
				{data/transverse_isotropic_afw_2.csv};
		\addplot+[discard if not={Bnd}{100000.0}] 
				table [x=ref, y=omega_err, col sep=comma] 
				{data/transverse_isotropic_afw_2.csv};
		\nextgroupplot[ylabel={$\mathrm{AFW}_3$}]
		\addplot+[discard if not={Bnd}{10.0}] 
				table [x=ref, y=sigma_error, col sep=comma] 
				{data/transverse_isotropic_afw_3.csv};
		\addplot+[discard if not={Bnd}{1000.0}] 
				table [x=ref, y=sigma_error, col sep=comma] 
				{data/transverse_isotropic_afw_3.csv};
		\addplot+[discard if not={Bnd}{100000.0}] 
				table [x=ref, y=sigma_error, col sep=comma] 
				{data/transverse_isotropic_afw_3.csv};
		\nextgroupplot
		\addplot+[discard if not={Bnd}{10.0}] 
				table [x=ref, y=displacement_error, col sep=comma] 
				{data/transverse_isotropic_afw_3.csv};
		\addplot+[discard if not={Bnd}{1000.0}] 
				table [x=ref, y=displacement_error, col sep=comma] 
				{data/transverse_isotropic_afw_3.csv};
		\addplot+[discard if not={Bnd}{100000.0}] 
				table [x=ref, y=displacement_error, col sep=comma] 
				{data/transverse_isotropic_afw_3.csv};
		\nextgroupplot
		\addplot+[discard if not={Bnd}{10.0}] 
				table [x=ref, y=omega_err, col sep=comma] 
				{data/transverse_isotropic_afw_3.csv};
		\addplot+[discard if not={Bnd}{1000.0}] 
				table [x=ref, y=omega_err, col sep=comma] 
				{data/transverse_isotropic_afw_3.csv};
		\addplot+[discard if not={Bnd}{100000.0}] 
				table [x=ref, y=omega_err, col sep=comma] 
				{data/transverse_isotropic_afw_3.csv};
		\end{groupplot}
		\node at ($(group c1r3.south) + (0,-2)$) 
			{(A) $\|\mat{\sigma}-\mat{\sigma}_h\|_{\div}$};
		\node at ($(group c2r3.south) + (0,-2)$) {(B) $\|\vec{u}-\vec{u}_h\|$}; 
		\node at ($(group c3r3.south) + (0,-2)$) 
			{(C) $\|\mat{\omega} - \mat{\omega}_h\|$};;
	\end{tikzpicture}
	\caption{Numerical results for the 2D transversely isotropic example
		in \cref{sec:trans_iso} for
		$\delta = 10$ (\ref*{plot:trans-at-hdiv-alpha1}),
		$\delta = 10^3$ (\ref*{plot:trans-at-hdiv-alpha2}), and
		$\delta = 10^5$ (\ref*{plot:trans-at-hdiv-alpha3}) with the 
		$\mathrm{AFW}_k$, $k \in \{1,2,3\}$, schemes.
      The errors are displayed as in \cref{fig:le2d-errors}. 	
		Observe that the stress errors for the $\mathrm{AFW}_1$  and 
		$\mathrm{AFW}_2$ schemes are not zero up to roundoff errors,
      even though $\mat{\sigma} \equiv \mat{0}$.
      Meanwhile, $\mathrm{AFW}_3$ produces a stress-free solution.
      Note that the exact rotation tensor $\mat{\omega}$
      is only included in the discrete space $\Xi_h$ for the $\mathrm{AFW}_k$
      scheme with $k\geq 3$.
   }
	\label{fig:trans2d-afw-errors}
\end{figure}

\subsection{Example 3: Polar Fluids}
\label{sec:polar}

In the previous two examples, we have seen that the schemes for the 
strongly symmetric formulation \cref{eq:linear_laws_weak_strong_sym_fem}
and the weakly symmetric formulation \cref{eq:linear_laws_weak_weak_sym_fem}
behaved similarly with respect to the scaling parameter $\delta$ provided 
that the exact rotation tensor satisfied $\mat{\omega} \in \Xi_h$. In the 
example in \cref{sec:trans_iso}, the condition $\mat{\omega} \in \Xi_h$ required 
sufficiently high-order elements. We now turn to an example where the inclusion 
$\mat{\omega} \in \Xi_h$ is not possible for any piecewise polynomial space 
$\Xi_h$.

Consider polar fluids --- fluids 
composed by constituents endowed with a molecular orientation that while 
retaining fluid flow properties exhibit anisotropic elastic response.
Following Leslie's and Ericksen's works \cite{ericksen,leslie,leslie2} if we 
introduce a director field $\vec{\nu}:\Omega\to \mathbb{R}^d$ describing 
the average orientation of the molecular constituents, we can assume the stress 
response of the polar fluid subject to the divergence constraint 
$\nabla\cdot \vec{u} = g_{\mathrm{div}}$ is given by
\begin{equation}
	\label{eq:constitutive_law_polar}
    \mat{\sigma} = 2\mu \symgrad(\vec{u}) - {p\mat{I}} 
		+ K_{F} \nabla\vec{\nu}^{\top}\nabla\vec{\nu},
\end{equation}
where $\mu$ is the fluid viscosity, $p:\Omega\to \mathbb{R}$ is a Lagrange
multiplier enforcing the divergence constraint, 
and $K_F : \Omega \to \mathbb{R}$ is known as the Frank constant.
In general, $K_F$ and $\vec{\nu}$ depend nonlinearly on $\vec{u}$ and $p$
(see e.g. \cite[eq. (6.17)]{farrell} for compressible, 
inviscid liquid crystals).
Here, we assume that $K_F$ and $\vec{\nu}$ are given so that 
\cref{eq:constitutive_law_polar} is of the form 
\cref{eq:constitutive_law_fluid}.
To complete the specification of the variational form 
\cref{eq:linear_laws_weak_strong_sym} or \cref{eq:linear_laws_weak_weak_sym},
we also need to specify $\int_{\Omega} \tr\mat{\sigma} \d{x}$, which
we take to be zero since $\mat{\sigma} \equiv \mat{0}$ in the examples below.

\subsubsection{Two dimensions}
\label{sec:polar-2d}

Suppose we choose the velocity and pressure to be
\begin{align*}
	\vec{u}(x,y) = -\frac{\delta}{\mu}
	\begin{bmatrix}
		-\cos(x)\cosh(y)\\
		\sin(x)\sinh(y)
	\end{bmatrix}
	\quad \text{and} \quad
	p(x,y) = -\delta \sin(x)\cosh(y)
\end{align*}
and $\vec{\nu}$ and $K_F$ to be
\begin{align*}
	\vec{\nu}(x,y) =  \begin{bmatrix}
		x \\ y
	\end{bmatrix}
	\quad \text{and} \quad
	 K_F(x,y) = \delta \sin(x)\cosh(y).
\end{align*}
Then, a direct calculation shows that for any value of $\mu$ and $\delta$, 
we have
\begin{align*}
	\mat{\sigma} \equiv \mat{0} 
	\quad \text{and} \quad 
	\mat{\omega} = -\frac{\delta}{\mu} \begin{bmatrix}
		0 & -\cos(x) \sinh(y) \\
		\cos(x) \sinh(y) & 0
	\end{bmatrix}.
\end{align*}
Note that the inclusion $\mat{\omega} \in \Xi_h$ is not possible for any
piecewise polynomial space $\Xi_h$.

The numerical results for the JMK scheme with $\texttt{maxh} = 1/32$,
the $\mathrm{HZ}_3$ scheme with $\texttt{maxh} = 1/8$, and the 
$\mathrm{AFW}_3$ scheme with $\texttt{maxh} = 1/8$ are displayed
in \cref{fig:polar2d-errors}. We observe that for the strongly symmetric
schemes, the stress errors behave similarly as in the previous two 
examples, while for the weakly symmetric scheme $\mathrm{AFW}_3$, the stress
errors are well above solver tolerances and scale like $\delta$.

\begin{figure}[htbp]
	\centering
	\begin{tikzpicture}
		\centering
		\begin{groupplot}[%
			group style={group size=3 by 3, 
				         horizontal sep=45pt, 
						 vertical sep=50pt},
			width=0.32\linewidth,
			height=0.32\linewidth,
			domain=2:5, 
			xtick={0,1,2,3,4}, 
			ymode=log, 
			xlabel={refinement level},
			ymajorgrids=true,
			grid style=dashed,
			every axis plot/.append style={line width=1.1pt},
			cycle list name=myplots,
		]
		\nextgroupplot[ylabel={JMK}]
		\addplot+[discard if not={Bnd}{10.0}] 
				table [x=ref, y=sigma_error, col sep=comma] 
				{data/polar_jm_1.csv};
				\label{plot:polar-jm-hdiv-alpha1}
		\addplot+[discard if not={Bnd}{1000.0}] 
				table [x=ref, y=sigma_error, col sep=comma] 
				{data/polar_jm_1.csv};
				\label{plot:polar-jm-hdiv-alpha2}
		\addplot+[discard if not={Bnd}{100000.0}] 
				table [x=ref, y=sigma_error, col sep=comma] 
				{data/polar_jm_1.csv};
				\label{plot:polar-jm-hdiv-alpha3}
		\nextgroupplot
		\addplot+[discard if not={Bnd}{10.0}] 
				table [x=ref, y=displacement_error, col sep=comma] 
				{data/polar_jm_1.csv};
		\addplot+[discard if not={Bnd}{1000.0}] 
				table [x=ref, y=displacement_error, col sep=comma] 
				{data/polar_jm_1.csv};
		\addplot+[discard if not={Bnd}{100000.0}] 
				table [x=ref, y=displacement_error, col sep=comma] 
				{data/polar_jm_1.csv};
		\nextgroupplot[group/empty plot]
		\nextgroupplot[ylabel={$\mathrm{HZ}_3$}]
		\addplot+[discard if not={Bnd}{10.0}] 
				table [x=ref, y=sigma_error, col sep=comma] 
				{data/polar_hz_3.csv};
		\addplot+[discard if not={Bnd}{1000.0}] 
				table [x=ref, y=sigma_error, col sep=comma] 
				{data/polar_hz_3.csv};
		\addplot+[discard if not={Bnd}{100000.0}] 
				table [x=ref, y=sigma_error, col sep=comma] 
				{data/polar_hz_3.csv};
		\nextgroupplot
		\addplot+[discard if not={Bnd}{10.0}] 
				table [x=ref, y=displacement_error, col sep=comma] 
				{data/polar_hz_3.csv};
		\addplot+[discard if not={Bnd}{1000.0}] 
				table [x=ref, y=displacement_error, col sep=comma] 
				{data/polar_hz_3.csv};
		\addplot+[discard if not={Bnd}{100000.0}] 
				table [x=ref, y=displacement_error, col sep=comma] 
				{data/polar_hz_3.csv};
		\nextgroupplot[group/empty plot]
		\nextgroupplot[ylabel={$\mathrm{AFW}_3$}]
		\addplot+[discard if not={Bnd}{10.0}] 
				table [x=ref, y=sigma_error, col sep=comma] 
				{data/polar_afw_3.csv};
		\addplot+[discard if not={Bnd}{1000.0}] 
				table [x=ref, y=sigma_error, col sep=comma] 
				{data/polar_afw_3.csv};
		\addplot+[discard if not={Bnd}{100000.0}] 
				table [x=ref, y=sigma_error, col sep=comma] 
				{data/polar_afw_3.csv};
		\nextgroupplot
		\addplot+[discard if not={Bnd}{10.0}] 
				table [x=ref, y=displacement_error, col sep=comma] 
				{data/polar_afw_3.csv};
		\addplot+[discard if not={Bnd}{1000.0}] 
				table [x=ref, y=displacement_error, col sep=comma] 
				{data/polar_afw_3.csv};
		\addplot+[discard if not={Bnd}{100000.0}] 
				table [x=ref, y=displacement_error, col sep=comma] 
				{data/polar_afw_3.csv};
		\nextgroupplot
		\addplot+[discard if not={Bnd}{10.0}] 
				table [x=ref, y=omega_err, col sep=comma] 
				{data/polar_afw_3.csv};
		\addplot+[discard if not={Bnd}{1000.0}] 
				table [x=ref, y=omega_err, col sep=comma] 
				{data/polar_afw_3.csv};
		\addplot+[discard if not={Bnd}{100000.0}] 
				table [x=ref, y=omega_err, col sep=comma] 
				{data/polar_afw_3.csv};
		\end{groupplot}
		\node at ($(group c1r3.south) + (0,-2)$) 
			{(A) $\|\mat{\sigma}-\mat{\sigma}_h\|_{\div}$};
		\node at ($(group c2r3.south) + (0,-2)$) {(B) $\|\vec{u}-\vec{u}_h\|$};
		\node at ($(group c3r3.south) + (0,-2)$) 
			{(C) $\|\mat{\omega} - \mat{\omega}_h\|$};
	\end{tikzpicture}
	\caption{Numerical results for the 2D polar fluid example
		in \cref{sec:polar-2d} for
		$\delta = 10$ (\ref*{plot:polar-jm-hdiv-alpha1}),
		$\delta = 10^3$ (\ref*{plot:polar-jm-hdiv-alpha2}), and
		$\delta = 10^5$ (\ref*{plot:polar-jm-hdiv-alpha3}) with the 
		JMK scheme, the $\mathrm{HZ}_3$ scheme, and 
		the $\mathrm{AFW}_3$ scheme.
		The errors are displayed as in \cref{fig:le2d-errors}. 
		Observe that the strongly symmetric schemes JMK and $\mathrm{HZ}_3$
		have similar behavior to the strongly symmetric schemes in the previous 
		two examples, while
		the stress errors for $\mathrm{AFW}_3$ are well above solver tolerances
		and scale like $\delta$.}
	\label{fig:polar2d-errors}
\end{figure}

\subsubsection{Three dimensions}
\label{sec:polar-3d}

Similar phenomena occur in three dimensions. 
Consider the following velocity and pressure fields:
\begin{align*}
	\vec{u}(x, y, z) = -\frac{\delta}{2\mu} \begin{bmatrix}
		x + 2\sin(y) \\
		\frac{3}{2} y + \frac{1}{4} \sin(2y) \\
		z 
	\end{bmatrix}
	\quad \text{and} \quad 
	p(x, y, z) \equiv 0
\end{align*}
and $\vec{\nu}$ and $K_F$ to be 
\begin{align*}
	\vec{\nu}(x, y, z) = \begin{bmatrix}
		x + \sin(y) \\
		y \\ 
		z
	\end{bmatrix}
	\quad \text{and} \quad
	K_F(x, y, z) = \delta.
\end{align*}
The numerical results for the JMK and $\mathrm{AFW}_1$ schemes with 
$\texttt{maxh} = 1/4$ are displayed in \cref{fig:polar3d-errors}. Note that 
the behavior of the methods is analogous to the behavior in two dimensions.
\begin{figure}[htbp]
	\centering
	\begin{tikzpicture}
		\centering
		\begin{groupplot}[%
			group style={group size=3 by 2, 
				         horizontal sep=45pt, 
						 vertical sep=50pt},
			width=0.32\linewidth,
			height=0.32\linewidth,
			domain=2:5, 
			xtick={0,1,2,3,4}, 
			ymode=log, 
			xlabel={refinement level},
			ymajorgrids=true,
			grid style=dashed,
			every axis plot/.append style={line width=1.1pt},
			cycle list name=myplots,
		]
		\nextgroupplot[ylabel={JMK}]
		\addplot+[discard if not={Bnd}{10.0}] 
				table [x=ref, y=sigma_error, col sep=comma] 
				{data/polar_3d_jm_1.csv};
				\label{plot:polar3d-jm-hdiv-alpha1}
		\addplot+[discard if not={Bnd}{1000.0}] 
				table [x=ref, y=sigma_error, col sep=comma] 
				{data/polar_3d_jm_1.csv};
				\label{plot:polar3d-jm-hdiv-alpha2}
		\addplot+[discard if not={Bnd}{100000.0}] 
				table [x=ref, y=sigma_error, col sep=comma] 
				{data/polar_3d_jm_1.csv};
				\label{plot:polar3d-jm-hdiv-alpha3}
		\nextgroupplot
		\addplot+[discard if not={Bnd}{10.0}] 
				table [x=ref, y=displacement_error, col sep=comma] 
				{data/polar_3d_jm_1.csv};
		\addplot+[discard if not={Bnd}{1000.0}] 
				table [x=ref, y=displacement_error, col sep=comma] 
				{data/polar_3d_jm_1.csv};
		\addplot+[discard if not={Bnd}{100000.0}] 
				table [x=ref, y=displacement_error, col sep=comma] 
				{data/polar_3d_jm_1.csv};
		\nextgroupplot[group/empty plot]
		\nextgroupplot[ylabel={$\mathrm{AFW}_1$}]
		\addplot+[discard if not={Bnd}{10.0}] 
				table [x=ref, y=sigma_error, col sep=comma] 
				{data/polar_3d_afw_1.csv};
		\addplot+[discard if not={Bnd}{1000.0}] 
				table [x=ref, y=sigma_error, col sep=comma] 
				{data/polar_3d_afw_1.csv};
		\addplot+[discard if not={Bnd}{100000.0}] 
				table [x=ref, y=sigma_error, col sep=comma] 
				{data/polar_3d_afw_1.csv};
		\nextgroupplot
		\addplot+[discard if not={Bnd}{10.0}] 
				table [x=ref, y=displacement_error, col sep=comma] 
				{data/polar_3d_afw_1.csv};
		\addplot+[discard if not={Bnd}{1000.0}] 
				table [x=ref, y=displacement_error, col sep=comma] 
				{data/polar_3d_afw_1.csv};
		\addplot+[discard if not={Bnd}{100000.0}] 
				table [x=ref, y=displacement_error, col sep=comma] 
				{data/polar_3d_afw_1.csv};
		\nextgroupplot[]
		\addplot+[discard if not={Bnd}{10.0}] 
				table [x=ref, y=omega_err, col sep=comma] 
				{data/polar_3d_afw_1.csv};
		\addplot+[discard if not={Bnd}{1000.0}] 
				table [x=ref, y=omega_err, col sep=comma] 
				{data/polar_3d_afw_1.csv};
		\addplot+[discard if not={Bnd}{100000.0}] 
				table [x=ref, y=omega_err, col sep=comma] 
				{data/polar_3d_afw_1.csv};
		\end{groupplot}
		\node at ($(group c1r3.south) + (0,3.5)$) 
			{(A) $\|\mat{\sigma}-\mat{\sigma}_h\|_{\div}$};
		\node at ($(group c2r3.south) + (0,3.5)$) 
			{(B) $\|\vec{u}-\vec{u}_h\|$};
		\node at ($(group c3r3.south) + (0,3.5)$) 
			{(C) $\|\mat{\omega}-\mat{\omega}_h\|$};
	\end{tikzpicture}
	\caption{Numerical results for the 3D polar fluid example
		in \cref{sec:polar-3d} for
		$\delta = 10$ (\ref*{plot:polar3d-jm-hdiv-alpha1}),
		$\delta = 10^3$ (\ref*{plot:polar3d-jm-hdiv-alpha2}), and
		$\delta = 10^5$ (\ref*{plot:polar3d-jm-hdiv-alpha3}) with the 
		JMK and $\mathrm{AFW}_1$ schemes.
		The errors are displayed as in \cref{fig:le2d-errors}. 
		Observe that the strongly symmetric schemes JMK has similar behavior to 
		the strongly symmetric schemes in the previous 
		two examples, while the stress errors for $\mathrm{AFW}_1$ are well 
		above solver tolerances and scale like $\delta$.}
	\label{fig:polar3d-errors}
\end{figure}

\subsection{Summarizing example and material robustness}
\label{sec:ex_summary}

We have seen that any scheme that preserves angular momentum,
i.e.~is strongly symmetric, always produces a zero stress state, up to 
roundoff errors, whenever the true stress vanishes. However, the schemes
that violate the angular momentum conservation, 
i.e.~the weakly symmetric ones, can produce arbitrarily large stress states
unless the exact rotation tensor belongs to the discrete space $\Xi_h$. 
In general, the solution to \cref{eq:linear_laws_strong} will be a linear 
combination of stress-free and stressed states. To illustrate the
behavior of the schemes in this case, we add a smooth component to the exact 
velocity in the 2D polar fluid example in \cref{sec:polar-2d}:
\begin{align*}
	\vec{u}(x,y) = -\frac{\delta}{\mu}
	\begin{bmatrix}
		-\cos(x)\cosh(y)\\
		\sin(x)\sinh(y)
	\end{bmatrix} + \begin{bmatrix}
		\cos(x)y\\
		\sin(y)
	\end{bmatrix}
	\quad \text{and} \quad
	p(x,y) = -\delta \sin(x)\cosh(y)
\end{align*}
and define $\mat{\sigma}$ so that \cref{eq:linear_constitutive_law} holds 
with $\lambda = \infty$ for the same choice of $\mat{F}$ as in 
\cref{sec:polar-2d}. In particular, the exact stress is independent of 
$\delta$. In \cref{fig:polarextra-errors}, we observe that as 
$\delta \to \infty$ the stress error curves overlap up to solver tolerances 
for strongly symmetric schemes, while the stress errors scale like $\delta$ for 
weakly symmetric schemes.

\begin{figure}[htbp]
	\centering
	\begin{tikzpicture}
		\centering
		\begin{groupplot}[%
			group style={group size=3 by 3, 
				         horizontal sep=45pt, 
						 vertical sep=50pt},
			width=0.32\linewidth,
			height=0.32\linewidth,
			domain=2:5, 
			xtick={0,1,2,3,4}, 
			ymode=log, 
			xlabel={refinement level},
			ymajorgrids=true,
			grid style=dashed,
			every axis plot/.append style={line width=1.1pt},
			cycle list name=myplots,
		]
		\nextgroupplot[ylabel={JMK}]
		\addplot+[discard if not={Bnd}{10.0}] 
				table [x=ref, y=sigma_error, col sep=comma] 
				{data/polar_extra_jm_1.csv};
				\label{plot:polar-extra-jm-hdiv-alpha1}
		\addplot+[discard if not={Bnd}{1000.0}] 
				table [x=ref, y=sigma_error, col sep=comma] 
				{data/polar_extra_jm_1.csv};
				\label{plot:polar-extra-jm-hdiv-alpha2}
		\addplot+[discard if not={Bnd}{100000.0}] 
				table [x=ref, y=sigma_error, col sep=comma] 
				{data/polar_extra_jm_1.csv};
				\label{plot:polar-extra-jm-hdiv-alpha3}
		\nextgroupplot
		\addplot+[discard if not={Bnd}{10.0}] 
				table [x=ref, y=displacement_error, col sep=comma] 
				{data/polar_extra_jm_1.csv};
		\addplot+[discard if not={Bnd}{1000.0}] 
				table [x=ref, y=displacement_error, col sep=comma] 
				{data/polar_extra_jm_1.csv};
		\addplot+[discard if not={Bnd}{100000.0}] 
				table [x=ref, y=displacement_error, col sep=comma] 
				{data/polar_extra_jm_1.csv};
		\nextgroupplot[group/empty plot]
		\nextgroupplot[ylabel={$\mathrm{HZ}_3$}]
		\addplot+[discard if not={Bnd}{10.0}] 
				table [x=ref, y=sigma_error, col sep=comma] 
				{data/polar_extra_hz_3.csv};
		\addplot+[discard if not={Bnd}{1000.0}] 
				table [x=ref, y=sigma_error, col sep=comma] 
				{data/polar_extra_hz_3.csv};
		\addplot+[discard if not={Bnd}{100000.0}] 
				table [x=ref, y=sigma_error, col sep=comma] 
				{data/polar_extra_hz_3.csv};
		\nextgroupplot
		\addplot+[discard if not={Bnd}{10.0}] 
				table [x=ref, y=displacement_error, col sep=comma] 
				{data/polar_extra_hz_3.csv};
		\addplot+[discard if not={Bnd}{1000.0}] 
				table [x=ref, y=displacement_error, col sep=comma] 
				{data/polar_extra_hz_3.csv};
		\addplot+[discard if not={Bnd}{100000.0}] 
				table [x=ref, y=displacement_error, col sep=comma] 
				{data/polar_extra_hz_3.csv};
		\nextgroupplot[group/empty plot]
		\nextgroupplot[ylabel={$\mathrm{AFW}_3$}]
		\addplot+[discard if not={Bnd}{10.0}] 
				table [x=ref, y=sigma_error, col sep=comma] 
				{data/polar_extra_afw_3.csv};
		\addplot+[discard if not={Bnd}{1000.0}] 
				table [x=ref, y=sigma_error, col sep=comma] 
				{data/polar_extra_afw_3.csv};
		\addplot+[discard if not={Bnd}{100000.0}] 
				table [x=ref, y=sigma_error, col sep=comma] 
				{data/polar_extra_afw_3.csv};
		\nextgroupplot
		\addplot+[discard if not={Bnd}{10.0}] 
				table [x=ref, y=displacement_error, col sep=comma] 
				{data/polar_extra_afw_3.csv};
		\addplot+[discard if not={Bnd}{1000.0}] 
				table [x=ref, y=displacement_error, col sep=comma] 
				{data/polar_extra_afw_3.csv};
		\addplot+[discard if not={Bnd}{100000.0}] 
				table [x=ref, y=displacement_error, col sep=comma] 
				{data/polar_extra_afw_3.csv};
		\nextgroupplot
		\addplot+[discard if not={Bnd}{10.0}] 
				table [x=ref, y=omega_err, col sep=comma] 
				{data/polar_extra_afw_3.csv};
		\addplot+[discard if not={Bnd}{1000.0}] 
				table [x=ref, y=omega_err, col sep=comma] 
				{data/polar_extra_afw_3.csv};
		\addplot+[discard if not={Bnd}{100000.0}] 
				table [x=ref, y=omega_err, col sep=comma] 
				{data/polar_extra_afw_3.csv};
		\end{groupplot}
		\node at ($(group c1r3.south) + (0,-2)$) 
			{(A) $\|\mat{\sigma}-\mat{\sigma}_h\|_{\div}$};
		\node at ($(group c2r3.south) + (0,-2)$) {(B) $\|\vec{u}-\vec{u}_h\|$};
		\node at ($(group c3r3.south) + (0,-2)$) 
			{(C) $\|\mat{\omega} - \mat{\omega}_h\|$};
	\end{tikzpicture}
	\caption{Numerical results for the 2D polar fluid example with a 
		stressed configuration
		as presented in \cref{sec:ex_summary} for
		$\delta = 10$ (\ref*{plot:polar-extra-jm-hdiv-alpha1}),
		$\delta = 10^3$ (\ref*{plot:polar-extra-jm-hdiv-alpha2}), and
		$\delta = 10^5$ (\ref*{plot:polar-extra-jm-hdiv-alpha3}) with the 
		JMK scheme, the $\mathrm{HZ}_3$ scheme, and 
		the $\mathrm{AFW}_3$ scheme.
		The errors are displayed as in \cref{fig:le2d-errors}. 
		Observe that the strongly symmetric schemes JMK and $\mathrm{HZ}_3$
		produce errors that are independent of $\delta$, while
		the stress errors for $\mathrm{AFW}_3$ are well above solver tolerances
		and scale like $\delta$.}
	\label{fig:polarextra-errors}
\end{figure}

In summary, the stress errors for approximations to 
\cref{eq:linear_laws_strong} may be sensitive to shifts in the exact solution 
by a stress-free state. The discrete stresses from schemes that exactly
preserve angular momentum are unaffected by these shifts, while the stresses
from schemes that violate the preservation of angular momentum produce 
arbitrarily large errors. Based on these observations, 
we say a scheme is \textit{material robust} for the
linearized system \cref{eq:linear_laws_strong}
if the approximation to any stress-free state is stress-free pointwise.
As a consequence, the stress errors of a material robust scheme 
for \cref{eq:linear_laws_strong} remain bounded 
whenever the exact solution to \cref{eq:linear_laws_strong} is shifted by any 
zero-stress state. Thus, the results in this section suggest that 
the schemes preserving angular momentum are material robust, while 
the schemes that do not are not material robust. This finding will be confirmed
rigorously in the next section.

\section{Supporting theory}
\label{sec:theory}

We now seek a general theory to explain the numerical results above
and rigorously justify the material robustness of the strongly symmetric 
schemes. We first present a series of results in an abstract Hilbert space 
setting and then apply them to the particular examples above.
Let $\mathcal{V}$ and $\mathcal{Q}$ be Hilbert spaces 
with inner products $(\cdot,\cdot)_{\mathcal{V}}$ and 
$(\cdot,\cdot)_{\mathcal{Q}}$ and induced norms 
$\|\cdot\|_{\circ} := \sqrt{(\cdot,\cdot)_{\circ}}$ for 
$\circ \in \{ \mathcal{V}, \mathcal{Q} \}$.
Let $A(\cdot,\cdot) : \mathcal{V} \times \mathcal{V} \to \mathbb{R}$ and 
$B(\cdot,\cdot) : \mathcal{V} \times \mathcal{Q} \to \mathbb{R}$ be bounded 
bilinear forms.
For $F \in \mathcal{V}'$ and $G \in \mathcal{Q}'$, consider a 
generic saddle point problem: Find $u \in \mathcal{V}$ and $p \in \mathcal{Q}$ 
such that
\begin{subequations}
	\label{eq:saddle}
	\begin{alignat}{2}
		A(u, v) + B(v, p) &= F(v) \qquad & &\forall v \in \mathcal{V}, \\
		B(u, q) &= G(q) \qquad & &\forall q \in \mathcal{Q}.
	\end{alignat}
\end{subequations}
We assume that the bilinear forms $A(\cdot,\cdot)$ and $B(\cdot,\cdot)$
satisfy the usual Babu\v{s}ka-Brezzi conditions 
(see e.g. \cite[Theorem 4.2.3]{BoffiBrezziFortin13}) so
that solutions to \cref{eq:saddle} exist, are unique,
and depend continuously on $F$ and $G$. For instance, we assume that the
following inf-sup condition holds:
\begin{align}
	\label{eq:inf-sup-b-bilinear-continuous}
		\beta &:= \inf_{q \in \mathcal{Q}} \sup_{v \in \mathcal{V}} 
			\frac{B(v, q)}{\|v\|_{\mathcal{V}} \|q\|_{\mathcal{Q}}} > 0.
\end{align}
\noindent The particular examples we have seen in \cref{sec:general-setup}
are
\begin{itemize}
	\item Problem \cref{eq:linear_laws_weak_strong_sym}: 
		$\mathcal{V} = \Sigma^{\sym}$ and 
		$\mathcal{Q} = V$ defined in \cref{eq:sigma-sym-v-def}, 
		$A(\mat{\sigma}, \mat{\tau}) = a(\mat{\sigma}, \mat{\tau})$ defined in
		\cref{eq:a-bilinear}, and 
		$B(\mat{\tau}, \vec{v}) = b(\mat{\tau}, \vec{v})$ defined in
		\cref{eq:b-bilinear}.

	\item Problem \cref{eq:linear_laws_weak_weak_sym}: 
		$\mathcal{V} = \Sigma$ and, 
		$\mathcal{Q} =  V \times \Xi$ defined in 
		\cref{eq:sigma-sym-v-def,eq:sigma-xi-def}, 
		$A(\mat{\sigma}, \mat{\tau}) = a(\mat{\sigma}, \mat{\tau})$ defined in
		\cref{eq:a-bilinear}, and 
		$B(\mat{\tau}, (\vec{v}, \mat{\xi})) = b(\mat{\tau}, \vec{v}) 
			+ c(\mat{\tau}, \mat{\xi})$ defined in 
			\cref{eq:b-bilinear,eq:sigma-xi-def}.
\end{itemize}

Let $\mathcal{V}_h \subset \mathcal{V}$ and $\mathcal{Q}_h \subset \mathcal{Q}$ 
denote finite dimensional subspaces and consider the following conforming 
discretization of \cref{eq:saddle}: Find $u_h \in \mathcal{V}_h$ and 
$q_h \in \mathcal{Q}_h$ such that
\begin{subequations}
	\label{eq:saddle-discrete}
	\begin{alignat}{2}
		A(u_h, v_h) + B(v_h, p_h) &= F(v_h) \qquad 
			& &\forall v_h \in \mathcal{V}_h, \\
		B(u_h, q_h) &= G(q_h) \qquad & &\forall q_h \in \mathcal{Q}_h.
	\end{alignat}
\end{subequations}
Analogous to above, we assume that the discrete Babu\v{s}ka-Brezzi conditions 
hold (see e.g. \cite[Theorem 4.2.3]{BoffiBrezziFortin13}) so that solutions to 
\cref{eq:saddle-discrete} exist, are unique, and depend continuously 
on $F$ and $G$. For instance, we assume that the following discrete
inf-sup condition holds
\begin{align}
	\label{eq:inf-sup-b-bilinear-discrete}
	\beta_h &:= \inf_{q_h \in \mathcal{Q}_h} \sup_{v_h \in \mathcal{V}_h} 
		\frac{B(v_h, q_h)}{\|v_h\|_{\mathcal{V}} \|q_h\|_{\mathcal{Q}}} > 0.
\end{align}

\subsection{Structure-preservation}

The key property of the system \cref{eq:saddle} fundamental to material 
robustness is the following ``invariance" property: If $F(\cdot)$ is shifted
by $B(\cdot, r)$ for some $r \in \mathcal{Q}$, then only the Lagrange multiplier 
$p$ is shifted by $r$. More precisely, we have the following result.
\begin{lemma}
	\label{lem:saddle-invariance}
	Let $F \in \mathcal{V}'$ and $G \in \mathcal{Q}'$ be given, and let 
	$u \in \mathcal{V}$ and $p \in \mathcal{Q}$ satisfy \cref{eq:saddle}. 
	Given $r \in \mathcal{Q}$, let 
	$u_r \in \mathcal{V}$ and $p_r \in \mathcal{Q}$ satisfy
	\begin{subequations}
		\label{eq:saddle-perturbed}
		\begin{alignat}{2}
			A(u_r, v) + B(v, p_r) &= F(v) + B(v, r) \qquad 
				& &\forall v \in \mathcal{V}, \\
			B(u_r, q) &= G(q) \qquad & &\forall q \in \mathcal{Q}.
		\end{alignat}
	\end{subequations}
	Then, $u_r = u$ and $p_r = p + r$ for all $r \in \mathcal{Q}$.
\end{lemma}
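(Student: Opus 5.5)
The plan is to exhibit an explicit solution of \cref{eq:saddle-perturbed} and then appeal to uniqueness. The natural candidate is $(u, p + r)$, where $(u,p)$ solves \cref{eq:saddle}. Since \cref{eq:saddle-perturbed} is again a saddle point problem of the form \cref{eq:saddle}, its solution is unique: the data are $F + B(\cdot, r) \in \mathcal{V}'$ and $G \in \mathcal{Q}'$, and we have assumed the Babu\v{s}ka--Brezzi conditions hold for $A$ and $B$. The functional $v \mapsto B(v,r)$ lies in $\mathcal{V}'$ because $B$ is bounded, with norm at most $\|B\|\,\|r\|_{\mathcal{Q}}$. So this first step is only a check that the perturbed right-hand side is admissible.

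The verification uses bilinearity of $B$ in its second argument. For every $v \in \mathcal{V}$,
\begin{align*}
A(u, v) + B(v, p + r) = A(u,v) + B(v,p) + B(v,r) = F(v) + B(v,r),
\end{align*}
by the first equation of \cref{eq:saddle}. The second equation, $B(u,q) = G(q)$ for all $q \in \mathcal{Q}$, is the same in both systems. Hence $(u, p+r)$ solves \cref{eq:saddle-perturbed}.

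By uniqueness, $u_r = u$ and $p_r = p + r$ for every $r \in \mathcal{Q}$. Alternatively, one can subtract the two systems: $(u_r - u,\, p_r - p - r)$ solves \cref{eq:saddle} with $F = 0$ and $G = 0$, so it vanishes by the continuous-dependence (well-posedness) assumption. There is no genuine obstacle here; the only point needing care is that uniqueness comes from the assumed Babu\v{s}ka--Brezzi conditions, not from the inf-sup condition \cref{eq:inf-sup-b-bilinear-continuous} alone. The same argument applies verbatim to the discrete problem \cref{eq:saddle-discrete} with $r \in \mathcal{Q}_h$. It does not apply to a general $r \in \mathcal{Q} \setminus \mathcal{Q}_h$, since then $p + r \notin \mathcal{Q}_h$. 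This is precisely the mechanism behind the observed dependence on whether $\mat{\omega} \in \Xi_h$.
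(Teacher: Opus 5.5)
Your proof is correct and is exactly the paper's argument: verify directly that $(u, p + r)$ satisfies \cref{eq:saddle-perturbed}, then conclude by the uniqueness that follows from the assumed Babu\v{s}ka--Brezzi conditions. Your closing aside overreaches, however: $r \notin \mathcal{Q}_h$ does not by itself account for the dependence on $\mat{\omega} \in \Xi_h$ (in Example 1 the displacement component of $r$ is not in $V_h$, yet $\mat{\sigma}_h \equiv \mat{0}$). The paper instead explains it through the kernel inclusion of \cref{lem:saddle-invariance-discrete} for the strongly symmetric schemes, and through the estimate \cref{eq:weak-symmetric-a-priori-1} for the weakly symmetric ones, where $\nabla \cdot \Sigma_h = V_h$ means only the $\Xi$-component of $r$ affects the stress.
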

\begin{proof}
	Direct verification shows that the $u$ and $p + r$ satisfy 
	\cref{eq:saddle-perturbed}, and so the result follows by the uniqueness
	of solutions.
\end{proof}

In view of \cref{lem:saddle-invariance}, we can define one notion of a 
structure-preserving discretization as follows. For $r \in \mathcal{Q}$, let 
$u_{h, r} \in \mathcal{V}_h$ and $p_{h, r} \in \mathcal{Q}_h$ satisfy	
\begin{subequations}
	\label{eq:saddle-perturbed-discrete}
	\begin{alignat}{2}
		A(u_{h, r}, v) + B(v, p_{h, r}) &= F(v) + B(v, r) \qquad & 
		&\forall v \in \mathcal{V}_h, \\
		B(u_{h, r}, q) &= G(q) \qquad & &\forall q \in \mathcal{Q}_h.
	\end{alignat}
\end{subequations}
Then, we say the discretization $\mathcal{V}_h \times \mathcal{Q}_h$
of \cref{eq:saddle} is \textit{structure-preserving} if for any choice 
of $F \in \mathcal{V}'$ and $G \in \mathcal{Q}'$, 
there holds
\begin{align}
	\label{eq:structure-preserving-def}
	u_{h, r} = u_h \qquad \forall r \in \mathcal{Q}.
\end{align}
That is, the discrete scheme \cref{eq:saddle-discrete} possesses the same
invariance property as the continuous problem \cref{eq:saddle}.
Note that $u_{h, r} = u_h$ for all $r \in \mathcal{Q}_h$ by the same reasoning 
as in \cref{lem:saddle-invariance}, and so posing condition 
\cref{eq:structure-preserving-def} over all $r \in \mathcal{Q}$ 
is nontrivial. In \cite[eq. (1.4)]{JohnLinkeMerdonNeilanRebholz17} in the 
context of incompressible flow, property \cref{eq:structure-preserving-def} 
is called the ``fundamental invariance property''.

Another common notion of a structure-preserving scheme arises in the case
that $G \equiv 0$. Then, the exact solution satisfies
\begin{align}
	\label{eq:b-bilinear-kernel}
	u \in \ker \mathcal{B} 
		&:= \{v \in \mathcal{V} : B(v, q) = 0 \ \forall q \in \mathcal{Q}\},
\end{align}
while the discrete solution satisfies
\begin{align}
	\label{eq:b-bilinear-kernel-discrete}
	u_h \in \ker \mathcal{B}_h 
		&:= \{v_h \in \mathcal{V}_h : B(v_h, q_h) = 0 
			\ \forall q \in \mathcal{Q}_h\}.
\end{align}
Here, one may call a scheme structure-preserving if 
$\ker \mathcal{B}_h \subseteq \ker \mathcal{B}$. For example, in 
incompressible flow problems, the condition 
$\ker \mathcal{B}_h \subseteq \ker \mathcal{B}$ means that the discrete flow
is pointwise divergence-free (and hence incompressible). The following 
result shows that these apparently different
notions of a structure-preserving scheme actually coincide.
\begin{lemma}
	\label{lem:saddle-invariance-discrete}
	For $F \in \mathcal{V}'$, $G \in \mathcal{Q}'$, 
	and $r \in \mathcal{Q}$, let $u_{h} \in \mathcal{V}_h$ and 
	$p_h \in \mathcal{Q}_h$ satisfy \cref{eq:saddle-discrete} 
	and $u_{h, r} \in \mathcal{V}_h$ and $p_{h, r} \in \mathcal{Q}_h$
	satisfy \cref{eq:saddle-perturbed-discrete}. 
	Then, 
	\begin{align}
		\label{eq:saddle-perturbed-discrete-invariance}
		u_{h, r} = u_h \quad \forall r \in \mathcal{Q} 
			\iff \ker \mathcal{B}_h \subseteq \ker \mathcal{B}.
	\end{align}
\end{lemma}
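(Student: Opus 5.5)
We prove the two implications separately, using only the discrete Babu\v{s}ka--Brezzi conditions (existence and uniqueness for \cref{eq:saddle-discrete}) and linearity. Throughout, set $e_h := u_{h,r} - u_h$ and $\rho_h := p_{h,r} - p_h$. Subtracting \cref{eq:saddle-discrete} from \cref{eq:saddle-perturbed-discrete} gives
\begin{align*}
	A(e_h, v_h) + B(v_h, \rho_h) &= B(v_h, r) \qquad \forall v_h \in \mathcal{V}_h, \\
	B(e_h, q_h) &= 0 \qquad \forall q_h \in \mathcal{Q}_h .
\end{align*}
Thus $e_h \in \ker \mathcal{B}_h$, and $(e_h,\rho_h)$ is the discrete solution with data $F = B(\cdot, r)|_{\mathcal{V}_h}$ and $G = 0$.

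For ($\Leftarrow$), assume $\ker \mathcal{B}_h \subseteq \ker \mathcal{B}$ and test the first equation with $v_h = e_h$. Since $e_h \in \ker\mathcal{B}_h \subseteq \ker\mathcal{B}$, we have $B(e_h, r) = 0$ for the given $r \in \mathcal{Q}$. We also have $B(e_h,\rho_h)=0$ because $\rho_h \in \mathcal{Q}_h$. Hence $A(e_h,e_h) = 0$. The discrete kernel coercivity of $A$ on $\ker\mathcal{B}_h$, which is part of the assumed discrete Babu\v{s}ka--Brezzi conditions, then yields $e_h = 0$.

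A cleaner variant avoids invoking coercivity explicitly. On $\mathcal{V}_h$, the functional $v_h \mapsto B(v_h, r)$ vanishes on $\ker\mathcal{B}_h$. By the discrete inf-sup condition \cref{eq:inf-sup-b-bilinear-discrete} and the closed range theorem in finite dimensions, a functional on $\mathcal{V}_h$ that annihilates $\ker\mathcal{B}_h$ is of the form $B(\cdot, \tilde r_h)$ for some $\tilde r_h \in \mathcal{Q}_h$. Consequently $(0, \tilde r_h)$ solves the difference system, and uniqueness gives $e_h = 0$. This is the step I would write carefully: identifying $(\ker \mathcal{B}_h)^\perp$ in $\mathcal{V}_h'$ with the range of $q_h \mapsto B(\cdot, q_h)$. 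It holds because $\ker\mathcal{B}_h$ is exactly the annihilator of that range, and in finite dimensions the double annihilator returns the subspace.

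For ($\Rightarrow$), argue by contrapositive. Suppose some $w_h \in \ker\mathcal{B}_h$ satisfies $w_h \notin \ker\mathcal{B}$, so that $B(w_h, r) \neq 0$ for some $r \in \mathcal{Q}$. If $e_h = 0$ for this $r$, the difference system reduces to $B(v_h, \rho_h) = B(v_h, r)$ for all $v_h \in \mathcal{V}_h$. Taking $v_h = w_h$ gives $0 = B(w_h, \rho_h) = B(w_h, r) \neq 0$, a contradiction. Hence $u_{h,r} \neq u_h$ for this $r$, independently of $F$ and $G$.

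The main obstacle is therefore only the annihilator identification in the ($\Leftarrow$) direction. If one prefers, the coercivity route sidesteps it entirely, provided the kernel coercivity assumption is stated among the discrete Babu\v{s}ka--Brezzi conditions.
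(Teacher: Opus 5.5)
Your proof is correct. The $(\Rightarrow)$ direction is essentially the paper's: both arguments test the difference system with an element of $\ker\mathcal{B}_h$ and conclude that $u_{h,r}=u_h$ forces $B(\cdot,r)$ to vanish on $\ker\mathcal{B}_h$.

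For $(\Leftarrow)$ the routes differ. The paper restricts the difference system to the discrete kernel, obtaining $A(e_{h,r},v_h) = -B(v_h,r)$ for all $v_h\in\ker\mathcal{B}_h$. It then uses well-posedness of this reduced problem (invertibility of $A$ on $\ker\mathcal{B}_h$) to get ``$e_{h,r}=0$ iff $B(\cdot,r)$ annihilates $\ker\mathcal{B}_h$''. That single equivalence yields both implications at once.

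Your first route tests only with $v_h=e_h$, so it needs coercivity (or at least definiteness) of $A$ on $\ker\mathcal{B}_h$. At this point the paper assumes only the general Babu\v{s}ka--Brezzi conditions, which give invertibility on the kernel but not coercivity; coercivity enters only later, in \cref{eq:a-nonneg-spd-ker}. You can fix this by testing with every $v_h\in\ker\mathcal{B}_h$ instead, which gives $A(e_h,v_h)=0$ on the kernel. That is exactly the paper's proof.

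Your second route avoids the reduced problem entirely. The annihilator of $\ker\mathcal{B}_h$ in $\mathcal{V}_h'$ equals $\mathrm{range}\,\mathcal{B}_h^t$; this is pure finite-dimensional linear algebra, and the inf-sup condition is needed only for uniqueness of $\tilde r_h$, not its existence. This lets you exhibit $(0,\tilde r_h)$ as a solution of the difference system, so uniqueness of the full discrete problem is all you use. The argument is more elementary than the paper's. As a by-product it identifies the multiplier shift $p_{h,r}-p_h=\tilde r_h$, which is the content of the paper's \cref{lem:saddle-perturbed-discrete-pressure} with $\tilde r_h=\Phi_h r$.
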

\begin{proof}
	First note that the following holds by definition:
	\begin{align}
		\label{eq:b-discrete-kernel-zero-implies-kernel-inclusion}
		B(v_h, r) = 0 \quad \forall v_h \in \ker \mathcal{B}_h, 
			\ \forall r \in \mathcal{Q} 
		\iff \ker \mathcal{B}_h \subseteq \ker \mathcal{B}.
	\end{align}
	The differences $e_{h, r} : = u_{h} - u_{h, r}$ and 
	$\epsilon_{h, r} := p_{h} - p_{h, r}$ satisfy
	\begin{alignat*}{2}
		A(e_{h, r}, v_h) + B(v_h, \epsilon_{h, r}) &= -B(v_h, r) \qquad 
			& &\forall v_h \in \mathcal{V}_h, \\
		B(e_{h, r}, q_h) &= 0 \qquad & &\forall q_h \in \mathcal{Q}_h.
	\end{alignat*}
	Taking $v_h \in \ker \mathcal{B}_h$ then gives
	\begin{align*}
		A(e_{h, r}, v_h) = -B(v_h, r) \qquad \forall v_h \in \ker \mathcal{B}_h.
	\end{align*}
	By the well-posedness of \cref{eq:saddle-discrete}, the above problem 
	is well-posed (see e.g. \cite[Theorem 4.2.2]{BoffiBrezziFortin13}), and so
	$e_{h, r} \equiv 0$ if and only if 
	$B(v_h, r) = 0$ for all $v_h \in \ker \mathcal{B}_h$.
	Thanks to \cref{eq:b-discrete-kernel-zero-implies-kernel-inclusion}, 
	$e_{h, r} \equiv 0$ for all $r \in \mathcal{Q}$ if and only if 
	$\ker \mathcal{B}_h \subseteq \ker \mathcal{B}$.
\end{proof}

As in \cref{lem:saddle-invariance}, we can also obtain an expression for
the discrete pressure as follows.
\begin{lemma}
	\label{lem:saddle-perturbed-discrete-pressure}
	Let $p_h$ and $p_{h, r}$ be as in \cref{lem:saddle-invariance-discrete}
	and suppose that $\ker \mathcal{B}_h \subseteq \ker \mathcal{B}$. 
	Then, there exists a unique linear projection operator 
	$\Phi_h : \mathcal{Q} \to \mathcal{Q}_h$ such that
	\begin{align}
		\label{eq:pressure-fortin}
		B(v_h, q - \Phi_h q) = 0 \qquad \forall v_h \in \mathcal{V}_h, 
			\ \forall q \in \mathcal{Q},
	\end{align}
	and $p_{h, r} = p_h + \Phi_h r$.
\end{lemma}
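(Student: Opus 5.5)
Given $q \in \mathcal{Q}$, I plan to define $\Phi_h q \in \mathcal{Q}_h$ as the solution of an auxiliary discrete saddle point problem. I would take it to be the Lagrange multiplier component of: find $(w_h, \Phi_h q) \in \mathcal{V}_h \times \mathcal{Q}_h$ such that
\begin{align*}
	A(w_h, v_h) + B(v_h, \Phi_h q) &= B(v_h, q) \qquad \forall v_h \in \mathcal{V}_h, \\
	B(w_h, q_h) &= 0 \qquad \forall q_h \in \mathcal{Q}_h.
\end{align*}
This is \cref{eq:saddle-discrete} with data $F = B(\cdot, q)$ and $G = 0$. It is therefore uniquely solvable by the assumed discrete Babu\v{s}ka--Brezzi conditions, and $q \mapsto \Phi_h q$ is linear.

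The key step is showing that $w_h = 0$. By \cref{lem:saddle-invariance-discrete}, applied with $F \equiv 0$, $G \equiv 0$ (so $u_h = 0$) and $r = q$, the hypothesis $\ker \mathcal{B}_h \subseteq \ker \mathcal{B}$ gives $u_{h,q} = u_h = 0$. Here $u_{h,q}$ is exactly $w_h$. Plugging $w_h = 0$ into the first equation yields $B(v_h, q - \Phi_h q) = 0$ for all $v_h \in \mathcal{V}_h$, which is \cref{eq:pressure-fortin}.

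Next I would establish the projection property and uniqueness.
\begin{itemize}
	\item If $q \in \mathcal{Q}_h$, then $(0, q)$ solves the auxiliary problem, so uniqueness gives $\Phi_h q = q$.
	\item Suppose some other $\tilde{\Phi}_h q \in \mathcal{Q}_h$ also satisfies \cref{eq:pressure-fortin}. Then $B(v_h, \Phi_h q - \tilde{\Phi}_h q) = 0$ for all $v_h \in \mathcal{V}_h$. The discrete inf-sup condition \cref{eq:inf-sup-b-bilinear-discrete} then forces $\Phi_h q = \tilde{\Phi}_h q$.
\end{itemize}
Note that uniqueness holds even among arbitrary maps satisfying \cref{eq:pressure-fortin}, since the condition determines $\Phi_h q$ pointwise.

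Finally, I would identify $p_{h,r}$. By \cref{lem:saddle-invariance-discrete}, $u_{h,r} = u_h$. Subtracting the first equations of \cref{eq:saddle-discrete} and \cref{eq:saddle-perturbed-discrete} gives
\begin{align*}
	B(v_h, p_{h,r} - p_h) = B(v_h, r) = B(v_h, \Phi_h r) \qquad \forall v_h \in \mathcal{V}_h,
\end{align*}
where the second equality uses \cref{eq:pressure-fortin}. Since $p_{h,r} - p_h - \Phi_h r \in \mathcal{Q}_h$, the discrete inf-sup condition again gives $p_{h,r} = p_h + \Phi_h r$.

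The main obstacle is mild: realizing that the existence of $\Phi_h$ requires the kernel inclusion, through $w_h = 0$. Without it, the natural candidate, the $B$-orthogonal projection, need not satisfy \cref{eq:pressure-fortin} for every $v_h \in \mathcal{V}_h$, only modulo the $A$-term.
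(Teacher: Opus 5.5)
Your proof is correct. The uniqueness step and the identification $p_{h,r} = p_h + \Phi_h r$ match the paper's argument; the two proofs differ in how $\Phi_h$ is shown to exist.

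The paper simply cites \cite[Proposition 5.1.2]{BoffiBrezziFortin13}, which is essentially a duality statement about $B$ on $\mathcal{V}_h \times \mathcal{Q}_h$. The kernel inclusion says the functional $v_h \mapsto B(v_h, q)$ vanishes on $\ker \mathcal{B}_h$. In finite dimensions it therefore lies in $(\ker \mathcal{B}_h)^{\circ} = \operatorname{range} \mathcal{B}_h^t$, and injectivity of $\mathcal{B}_h^t$ (the discrete inf-sup condition) makes its preimage unique.

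You instead build $\Phi_h q$ as the multiplier of an auxiliary discrete saddle point problem with data $F = B(\cdot, q)$, $G = 0$. You then kill its primal component with \cref{lem:saddle-invariance-discrete}.

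What your route buys:
\begin{itemize}
\item It is self-contained within the paper.
\item Linearity comes directly from well-posedness, and idempotence from your check that $(0,q)$ solves the auxiliary problem. The paper does not spell out the projection property.
\end{itemize}

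What it costs:
\begin{itemize}
\item It routes through $A$ and the full discrete Babu\v{s}ka--Brezzi theory, including kernel coercivity. Yet $\Phi_h$ depends only on $B$, $\mathcal{V}_h$ and $\mathcal{Q}_h$; its independence of $A$ emerges only a posteriori, from uniqueness.
\item The duality argument makes this independence manifest. It also shows at once that the kernel inclusion is \emph{necessary} for $\Phi_h$ to exist: test \cref{eq:pressure-fortin} with $v_h \in \ker \mathcal{B}_h$.
\end{itemize}

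That necessity is a sharper form of your closing remark. Without the inclusion, no operator satisfying \cref{eq:pressure-fortin} exists at all, not merely the particular candidate you constructed.
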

\begin{proof}
	The existence of $\Phi_h$ is an immediate consequence of 
	\cite[Proposition 5.1.2]{BoffiBrezziFortin13}. For uniqueness, let
	$\Psi_h : \mathcal{Q} \to \mathcal{Q}_h$ be another linear operator
	satisfying \cref{eq:pressure-fortin}. Then, for any
	$q \in \mathcal{Q}$, we have
	\begin{align*}
		B(v_h, (\Phi_h - \Psi_h)q) = 0 \qquad \forall v_h \in \mathcal{V}_h.
	\end{align*}
	Thanks to the discrete inf-sup condition 
	\cref{eq:inf-sup-b-bilinear-discrete}, there holds
	\begin{align*}
		\beta_h \| (\Phi_h - \Psi_h) q\|_{\mathcal{Q}} 
			\leq \sup_{v_h \in \mathcal{V}_h} 
				\frac{B(v_h, (\Phi_h - \Psi_h) q)}{\|v_h\|_{\mathcal{V}_h}} 
			= 0,
	\end{align*} 
	and so $\Phi_h q = \Psi_h q$ for all $q \in \mathcal{Q}$.

	From the proof of \cref{lem:saddle-invariance-discrete}, we have
	\begin{align*}
		B(v_h, p_h - p_{h, r}) = -B(v_h, r) = -B(v_h, \Phi_h r) 
			\qquad \forall v \in \mathcal{V}_h,
	\end{align*}
	where we used \cref{eq:pressure-fortin}. Arguing as above, 
	we obtain $p_{h, r} = p_h + \Phi_h r$.
\end{proof}

\begin{remark}
	\label{rem:b-special-form}
	For some problems, including those in \cref{sec:examples},
	$B(\cdot, \cdot)$ is of the form
	\begin{align*}
		B(v, q) := (D v, q)_{\mathcal{Q}},
	\end{align*}
	where $D : \mathcal{V} \to \mathcal{Q}$ is a bounded linear operator.
	In this setting, $\Phi_h$ satisfies
	\begin{align*}
		(D v_h, q - \Phi_h q)_{\mathcal{Q}} = 0 \qquad 
			\forall v_h \in \mathcal{V}_h, \ \forall q \in \mathcal{Q}.
	\end{align*}
	Thus, $\Phi_h$ is the  $(\cdot,\cdot)_{\mathcal{Q}}$-orthogonal
	projection onto $D \mathcal{V}_h$. A common way to ensure that 
	$\ker \mathcal{B}_h \subseteq \ker \mathcal{B}$
	is to choose $\mathcal{Q}_h = D \mathcal{V}_h$,
	in which case $\Phi_h$ is the  $(\cdot,\cdot)_{\mathcal{Q}}$-orthogonal
	projection onto $\mathcal{Q}_h$.
\end{remark}

\subsection{Expressing the examples in the form \cref{eq:saddle-perturbed}}

All the examples in \cref{sec:examples} 
were constructed to be precisely of the form \cref{eq:saddle-perturbed} with 
$F(\cdot) \equiv 0$. First consider the strongly symmetric problem 
\cref{eq:linear_laws_weak_strong_sym}.
We choose $r = \vec{w}$ for some $\vec{w} \in \mathcal{Q} = V$. 
We cannot choose any $\vec{w}$, as the 
constitutive law \cref{eq:linear_laws_weak_strong_sym_1} requires 
that the RHS has a specific form. In particular, we require that
\begin{align*}
	(\nabla \cdot \mat{\tau}, \vec{w}) 
		= \langle \mat{\tau} \vec{n}, \vec{g} \rangle_{\partial \Omega}
				+ (\mat{F}, \mat{\tau}) 
		\qquad \forall \mat{\tau} \in \Sigma^{\sym}
\end{align*}
for some suitable $\vec{g}$ and $\mat{F}$.
Supposing that $\vec{w}$ is smooth enough, we integrate the LHS by parts
to obtain 
\begin{align*}
	\langle \mat{\tau} \vec{n}, \vec{w} \rangle_{\partial \Omega}
	- (\symgrad(\vec{w}), \mat{\tau}) 
		= \langle \mat{\tau} \vec{n}, \vec{g} \rangle_{\partial \Omega}
				+ (\mat{F}, \mat{\tau}) 
		\qquad \forall \mat{\tau} \in \Sigma^{\sym},
\end{align*}
and so $\vec{g} = \vec{w}|_{\partial \Omega}$ and 
$\symgrad(\vec{w}) = -\mat{F}$. Since all domains in 
\cref{sec:examples} are contractible, such a choice of $\vec{w}$
is possible only if Saint-Venant's compatibility condition holds 
(see e.g. \cite[Theorem 3.2]{Amrouche06}):
$\nabla^{\top} \times (\nabla \times \mat{F}) = \mat{0}$, where the first
curl is taken row-wise and the $\nabla^{\top} \times$ denotes the columnwise curl. 
Provided that $\mat{F}$ satisfies this condition, the equation 
$\symgrad(\vec{w}) = -\mat{F}$ can be integrated to find $\vec{w}$.
Of course, this is equivalent to saying that $\vec{w}$ gives rise 
to a no-stress configuration; i.e.~$\mat{\sigma} \equiv \mat{0}$ and 
$\vec{u} = \vec{w}$ satisfy \cref{eq:linear_constitutive_law}. 

\begin{itemize}
	\item For linear isotropic materials in \cref{sec:lin_el}, 
	$\mat{F} \equiv \mat{0}$, and so the only choices for $\vec{w}$ are
	rigid body motions, for which \cref{eq:lin_el_exact} is one option.

	\item For the transversely isotropic solid in \cref{sec:trans_iso},
	$\mat{F} = \delta \vec{\nu} \otimes \vec{\nu}$, and so we choose  
	$\vec{\nu}$ to satisfy the Saint-Venant's compatibility condition.

	\item For the polar fluids in \cref{sec:polar,sec:ex_summary}, we have
	$\mat{F} = (K_F \nabla \vec{\nu}^{\top} \nabla \vec{\nu})^D/(2\mu)
	- (g_{\div}/d) \mat{\mathbb{I}}$. In \cref{sec:polar-2d}, we first choose 
	$\vec{\nu} = [x, \ y]^{\top}$ so that 
	$(\nabla \vec{\nu})^{\top} \nabla \vec{\nu} = \mat{\mathbb{I}}$ and then find 
	$\vec{w}$ such that $\symgrad(\vec{w}) = h(x, y) \mat{\mathbb{I}}$.
	Then, $K_F = h(x, y)$ and $g_{\div} = \nabla \cdot \vec{w}$.
	In \cref{sec:polar-3d}, we find $\vec{\nu}$ such that
	$(\nabla \vec{\nu})^{\top} \nabla \vec{\nu}$ satisfies Saint-Venant's
	compatibility condition.	
\end{itemize}
Thus, the RHS of \cref{eq:linear_laws_weak_strong_sym_1} for all the examples
in \cref{sec:examples} is of the form $b(\mat{\tau}, \vec{w})$ for suitably
chosen $\vec{w}$.

For the weakly symmetric problem \cref{eq:linear_laws_weak_weak_sym}, 
we have $r = (\vec{w}, \mat{\eta}) \in \mathcal{Q} = V \times \Xi$. 
As above, the choice of $\vec{w}$
and $\mat{\eta}$ must match the constitutive law 
\cref{eq:linear_laws_weak_weak_sym_1}:
\begin{align*}
	(\nabla \cdot \mat{\tau}, \vec{w}) + (\mat{\tau}, \mat{\eta}) = 
		\langle \mat{\tau} \vec{n}, \vec{g} \rangle_{\partial \Omega}
				+ (\mat{F}, \mat{\tau}) 
		\qquad \forall \mat{\tau} \in \Sigma.
\end{align*}
Assuming $\vec{w}$ is sufficiently smooth and integrating by parts gives
\begin{align*} 
		\langle \mat{\tau} \vec{n}, \vec{w} \rangle_{\partial \Omega} 
		-(\mat{\tau}, \nabla \vec{w})
		+ (\mat{\tau}, \mat{\eta}) = 
		\langle \mat{\tau} \vec{n}, \vec{g} \rangle_{\partial \Omega}
				+ (\mat{F}, \mat{\tau}) 
		\qquad \forall \mat{\tau} \in \Sigma,
\end{align*} 
Thus, we must have that $\vec{g} = \vec{w}$ and 
$-\nabla \vec{w} + \mat{\eta} = \mat{F}$.
Since $\mat{F}$ is symmetric, we have $\symgrad(\vec{w}) = -\mat{F}$
and $\mat{\eta}  = \anti(\nabla \vec{w})$.  Thus, $\vec{w}$ is precisely
as above with $\mat{\eta}  = \anti(\nabla \vec{w})$, and so
the RHS of \cref{eq:linear_laws_weak_weak_sym_1} for all the examples
in \cref{sec:examples} is of the form 
$b(\mat{\tau}, \vec{w}) + c(\mat{\tau}, \mat{\eta})$ for suitably
chosen $\vec{w}$ and $\mat{\eta}$.

\subsection{Material robustness in the examples}
\label{sec:structure-preservation-examples}

Since all the numerical examples are of the form 
\cref{eq:saddle-perturbed-discrete}, we compare the numerical results from 
\cref{sec:examples} to \cref{lem:saddle-invariance-discrete}. For the 
strongly symmetric discrete formulation 
\cref{eq:linear_laws_weak_strong_sym_fem}, 
the JMK and $\mathrm{HZ}_3$ schemes satisfy
$\nabla \cdot \Sigma^{\sym}_h = V_h$, and so 
\begin{align*}
	\ker \mathcal{B}_h = \{ \mat{\tau}_h \in \Sigma^{\sym}_h : 
		(\nabla \cdot \mat{\tau}_h, \vec{v}_h)_{L^2(\Omega)} = 0 
			\ \forall \vec{v}_h \in V_h \}
	\subset \ker \mathcal{B} = \{ \mat{\tau} \in \Sigma^{\sym} :
	 	\nabla \cdot \mat{\tau} \equiv \vec{0} \}.	
\end{align*}
\Cref{lem:saddle-invariance-discrete} then shows that $\mat{\sigma}_h \equiv 0$
in exact arithmetic for all the examples in 
\cref{sec:lin_el,sec:trans_iso,sec:polar} with the 
JMK or $\mathrm{HZ}_3$ schemes. Thus, any scheme of the form 
\cref{eq:linear_laws_weak_strong_sym_fem} with 
$\nabla \cdot \Sigma^{\sym}_h = V_h$ is material robust
and the numerical results are consistent with 
the theory (up to roundoff errors and solver tolerances).

For the weakly symmetric discrete formulation 
\cref{eq:linear_laws_weak_strong_sym_fem}, the situation is more complicated.
Now, the discrete kernel $\ker \mathcal{B}_h$ has the form
\begin{align*}
	\ker \mathcal{B}_h = \{ \mat{\tau}_h \in \Sigma^{\sym}_h : 
		(\nabla \cdot \mat{\tau}_h, \vec{v}_h)_{L^2(\Omega)} 
			+ (\mat{\tau}_h, \mat{\xi}_h )_{L^2(\Omega)} = 0 
		\ \forall \vec{v}_h \in V_h, \ \forall \mat{\xi}_h \in \Xi_h \}.
\end{align*}
For all the schemes used in \cref{sec:examples}, we still have 
$\nabla \cdot \Sigma_h^{\sym} = V_h$ which gives
\begin{align*}
	\ker \mathcal{B}_h = \{  \mat{\tau}_h \in \Sigma^{\sym}_h : 
		\nabla \cdot \mat{\tau}_h \equiv \vec{0} \text{ and } 
		(\mat{\tau}_h, \mat{\xi}_h )_{L^2(\Omega)} = 0 
		\ \forall \mat{\xi}_h \in \Xi_h \}.
\end{align*}
However, none of the above schemes satisfy $\anti(\Sigma_h) \subseteq \Xi_h$, 
and so $\ker \mathcal{B}_h \nsubseteq \ker \mathcal{B}$. That is,
there are discrete tensors $\mat{\tau}_h \in \ker \mathcal{B}_h$ that are not
pointwise symmetric. Consequently, \cref{lem:saddle-invariance} does not apply
and we cannot expect $\mat{\sigma}_h$ to be zero. In the transversely isotropic 
example in \cref{sec:trans_iso}, PEERS and $\mathrm{AFW}_k$, $k \leq 2$,
both produced nonzero $\mat{\sigma}_h$. For the polar fluid example in 
\cref{sec:polar}, all the weakly symmetric schemes produced nonzero 
$\mat{\sigma}_h$, and so the weakly symmetric 
schemes considered in \cref{sec:general-setup-weak-sym} are not 
material robust. 

In summary, we have the following result:
\begin{theorem}
	\label{thm:material-robustness}
	If a strongly symmetric discretization 
	\cref{eq:linear_laws_weak_strong_sym_fem} satisfies 
	\begin{align}
		\label{eq:strong_sym_kernel_inclusion}
		\{ \mat{\tau}_h \in \Sigma^{\sym}_h : 
			(\nabla \cdot \mat{\tau}_h, \vec{v}_h)_{L^2(\Omega)} = 0 
			\ \forall \vec{v}_h \in V_h \}
		\subset \{ \mat{\tau} \in \Sigma^{\sym} :
	 		\nabla \cdot \mat{\tau} \equiv \vec{0} \},
	\end{align}
	then it is material robust.
	Similarly, if a weakly symmetric discretization 
	\cref{eq:linear_laws_weak_weak_sym_fem} satisfies
	\begin{multline}
		\label{eq:weak_sym_kernel_inclusion}
		\{ \mat{\tau}_h \in \Sigma_h : 
		(\nabla \cdot \mat{\tau}_h, \vec{v}_h)_{L^2(\Omega)} 
			+ (\mat{\tau}_h, \mat{\xi}_h )_{L^2(\Omega)} = 0 
			\ \forall \vec{v}_h \in V_h, \ \forall \mat{\xi}_h \in \Xi_h \} \\
		\subset \{ \mat{\tau} \in \Sigma^{\sym} :
	 		\nabla \cdot \mat{\tau} \equiv \vec{0} \},
	\end{multline}
	then it is material robust.
\end{theorem}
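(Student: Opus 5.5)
Both claims reduce to \cref{lem:saddle-invariance-discrete} once ``material robust'' is written in the abstract saddle point language. Material robustness means: whenever the exact solution of \cref{eq:linear_laws_strong} is a stress-free state, $\mat{\sigma} \equiv \mat{0}$, the discrete stress also vanishes pointwise. So the plan is to (i) show that any stress-free exact solution gives a discrete problem of the form \cref{eq:saddle-perturbed-discrete} with $F \equiv 0$, $G \equiv 0$, and (ii) apply \cref{lem:saddle-invariance-discrete} with the unperturbed discrete solution identically zero.

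For step (i), let $(\mat{\sigma}, \vec{u}) = (\mat{0}, \vec{w})$ solve \cref{eq:linear_laws_strong}. Then $\vec{f} = \nabla \cdot \mat{0} = \vec{0}$ and $\symgrad(\vec{w}) = -\mat{F}$ with $\vec{g} = \vec{w}|_{\partial\Omega}$. Since the continuous problem \cref{eq:linear_laws_weak_strong_sym} is well posed and its first equation is satisfied by $(\mat{0}, \vec{w})$, we have
\[
\langle \mat{\tau}\vec{n}, \vec{g}\rangle_{\partial\Omega} + (\mat{F}, \mat{\tau}) = b(\mat{\tau}, \vec{w}) \quad \forall \mat{\tau} \in \Sigma^{\sym}.
\]
This identity holds for all of $\Sigma^{\sym}$, hence also on $\Sigma^{\sym}_h$, and it is precisely the computation in the preceding subsection. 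In the weakly symmetric case, the same argument with the solution $(\mat{0}, \vec{w}, \anti(\nabla\vec{w}))$ of \cref{eq:linear_laws_weak_weak_sym} (cf.\ \cref{rem:xi-lag-mult-antisym}) gives the right-hand side $b(\mat{\tau}, \vec{w}) + c(\mat{\tau}, \anti(\nabla \vec{w}))$ for all $\mat{\tau} \in \Sigma$. In both cases the discrete problem is therefore \cref{eq:saddle-perturbed-discrete} with $F \equiv 0$, $G \equiv 0$, and $r = \vec{w}$ or $r = (\vec{w}, \anti(\nabla\vec{w}))$, respectively.

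For step (ii), with $F \equiv 0$ and $G \equiv 0$, uniqueness for \cref{eq:saddle-discrete} (the assumed discrete Babu\v{s}ka--Brezzi conditions) gives $u_h = 0$. The hypothesis \cref{eq:strong_sym_kernel_inclusion}, respectively \cref{eq:weak_sym_kernel_inclusion}, should be exactly $\ker\mathcal{B}_h \subseteq \ker\mathcal{B}$.
\begin{itemize}
	\item In the strongly symmetric case this is literal, since $\ker\mathcal{B} = \{\mat{\tau} \in \Sigma^{\sym} : \nabla\cdot\mat{\tau} \equiv \vec{0}\}$.
	\item In the weakly symmetric case, $\ker\mathcal{B} = \{\mat{\tau} \in \Sigma : \nabla\cdot\mat{\tau} \equiv \vec{0},\ (\mat{\tau}, \mat{\xi}) = 0 \ \forall \mat{\xi} \in \Xi\}$. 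Because $\Xi$ contains all $L^2$ skew fields, the orthogonality condition is equivalent to pointwise symmetry, so $\ker\mathcal{B}$ is again $\{\mat{\tau} \in \Sigma^{\sym} : \nabla\cdot\mat{\tau} \equiv \vec{0}\}$.
\end{itemize}
This identification of $\ker\mathcal{B}$ in the weakly symmetric case is the only point requiring a small argument, and I would verify it carefully; it is elementary via density of the skew fields in $\Xi$. \Cref{lem:saddle-invariance-discrete} then yields $\mat{\sigma}_h = u_{h,r} = u_h = 0$, i.e.\ material robustness.

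A remark on the ``consequence'' stated alongside the definition: for a general exact solution shifted by a stress-free state, linearity splits the problem into the unshifted problem plus a shift of the form above. That shift contributes zero to the discrete stress, so stress errors are unaffected. This does not need to be part of the proof, but it follows in the same way.
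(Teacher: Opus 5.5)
Your proposal is correct and follows the paper's argument. You recast a stress-free exact solution as the shifted discrete problem \cref{eq:saddle-perturbed-discrete} with $F \equiv 0$, $G \equiv 0$ and $r = \vec{w}$ (resp.\ $r = (\vec{w}, \anti(\nabla \vec{w}))$), identify the hypotheses with $\ker \mathcal{B}_h \subseteq \ker \mathcal{B}$ via the kernel description in \cref{sec:well-posedness-mixed}, and apply \cref{lem:saddle-invariance-discrete}, exactly as the paper does. Reading $b(\mat{\tau}, \vec{w}) = \langle \mat{\tau}\vec{n}, \vec{g}\rangle_{\partial\Omega} + (\mat{F}, \mat{\tau})$ directly off the weak formulation is slightly cleaner than the paper's integration by parts, since it needs no smoothness of $\vec{w}$. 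In the weakly symmetric kernel identification, no density argument is required: you can test with $\mat{\xi} = \anti(\mat{\tau}) \in \Xi$ itself.
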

\noindent Condition \cref{eq:strong_sym_kernel_inclusion} may be interpreted as 
ensuring that in the absence of body forces $\vec{f} = \vec{0}$, the scheme 
\cref{eq:linear_laws_weak_strong_sym_fem} produces a stress with zero 
linear momentum, i.e. $\div \mat{\sigma} = \vec{0}$. Condition 
\cref{eq:weak_sym_kernel_inclusion} additionally ensures that the angular 
momentum is conserved, i.e. $\mat{\sigma}^{\top} = \mat{\sigma}$.

We have yet to explain the following behavior.
For the linear isotropic solid 
in \cref{sec:lin_el}, all the weakly symmetric schemes gave 
$\mat{\sigma}_h \equiv \mat{0}$, while for the transversely isotropic solid,
$\mathrm{AFW}_3$ gave $\mat{\sigma}_h \equiv \mat{0}$. Recall that the 
key observation was that the rotation tensor $\mat{\omega}$
satisfied $\mat{\omega} \in \Xi_h$ in these cases. To rigorously explain the 
behavior in this case, we turn a priori error estimates.

\subsection{A priori error estimates}

As noted in the previous section, \cref{lem:saddle-invariance-discrete}
is not sufficient to explain the behavior observed in \cref{sec:examples}
in all cases, particularly the implication that $\mat{\sigma} \equiv \mat{0}$
and $\mat{\omega} \in \Xi_h$ guarantees $\mat{\sigma}_h \equiv \mat{0}$. 
To explain this case, we again return to the abstract setting
and establish a priori error estimates,
which requires some additional notation. We denote the norms of the 
bilinear forms $A(\cdot,\cdot)$ and $B(\cdot,\cdot)$ by
\begin{align}
	\label{eq:bilinear-forms-bounded}
	\|A\| := \sup_{\substack{v, w  \in \mathcal{V} \\ 
			\|v\|_{\mathcal{V}} = \|w\|_{\mathcal{V}} = 1}} |A(u, v)|
	\quad \text{and} \quad 
	\|B\| := \sup_{\substack{v \in \mathcal{V}, q \in \mathcal{Q} \\ 
			\|v\|_{\mathcal{V}} = \|q\|_{\mathcal{Q}}= 1 } } |B(v, q)|.
\end{align}
Let $\mathcal{B} : \mathcal{V} \to \mathcal{Q}'$ and 
$\mathcal{B}^t : \mathcal{Q} \to \mathcal{V}'$
denote the linear operators associated with the bilinear form 
$B(\cdot,\cdot)$; i.e.,
\begin{align*}
	\mathcal{B}(v)(q) := B(v, q) 
	\quad \text{and} \quad  
	\mathcal{B}^t(q)(v) := B(v, q)
	\qquad \forall v \in \mathcal{V}, 
		\ \forall q \in \mathcal{Q}.
\end{align*}
Let $\mathcal{B}_h : \mathcal{V}_h \to \mathcal{Q}_h'$ 
and $\mathcal{B}_h^t : \mathcal{Q}_h \to \mathcal{V}_h'$
be defined analogously. Note that the definition of $\ker \mathcal{B}$ 
\cref{eq:b-bilinear-kernel} and $\ker \mathcal{B}_h$ 
\cref{eq:b-bilinear-kernel-discrete} coincide with the usual notion of the 
kernel of a linear operator. We similarly use $\ker \mathcal{B}^t$ and 
$\ker \mathcal{B}_h^t$ to denote the corresponding kernels. 
Given $F \in \mathcal{V}'$ and $G \in \mathcal{Q}'$, we define
the following affine spaces:
\begin{align}
	\label{eq:zstarf-def}
	\mathcal{Z}_h^*(F) &:= \{ q_h \in \mathcal{Q}_h : 
		B(v_h, q_h) = F(v_h) \ \forall v_h \in \mathcal{V}_h \}, \\
	\label{eq:zg-def}
	\mathcal{Z}_h(G) &:= \{ v_h \in \mathcal{V}_h : 
		B(v_h, q_h) = G(q_h) \ \forall q_h \in \mathcal{Q}_h \}.
\end{align} 
Finally, we assume that $A(\cdot,\cdot)$ is symmetric and satisfies
\begin{subequations}
	\label{eq:a-nonneg-spd-ker}
	\begin{alignat}{2}
		A(v, v) &\geq 0 \qquad & &\forall v \in \mathcal{V}, \\
		A(z, z) & \geq \alpha \|z\|_{\mathcal{V}}^2 \qquad 
			& &\forall z \in \ker \mathcal{B}, \\
		A(z_h, z_h) & \geq \alpha_h \|z_h\|_{\mathcal{V}}^2 \qquad 
			& &\forall z_h \in \ker \mathcal{B}_h, 
	\end{alignat}
\end{subequations}
for some $\alpha, \alpha_h > 0$. Note that all the examples above 
satisfy \cref{eq:a-nonneg-spd-ker}. Strictly speaking, 
conditions \cref{eq:a-nonneg-spd-ker} can be weakened 
(see e.g. \cite[Theorem 4.2.2]{BoffiBrezziFortin13}) at the expense of harsher 
dependence on the constants in all the forthcoming estimates.

Without any additional conditions on $\mathcal{V}_h$ and 
$\mathcal{Q}_h$, we recall the standard error estimate from 
\cite[Theorem 5.2.2]{BoffiBrezziFortin13}:
\begin{subequations}
\begin{multline}
	\|u - u_h\|_{\mathcal{V}} \leq 
		2\left( \frac{\|A\|}{ \alpha_h } 
				+ \frac{ \|B\|}{\beta_h} \sqrt{\frac{\|A\|}{\alpha_h}} \right) 
			\inf_{v_h \in \mathcal{V}_h} \|u - v_h\|_{\mathcal{V}} 
		+ \frac{\|b\|}{\alpha_h} 
			\inf_{q_h \in \mathcal{Q}_h} \|p - q_h\|_{\mathcal{Q}}
\end{multline}
and
\begin{multline}
\|p - p_h\|_{\mathcal{Q}} \leq 
		\left( \frac{2}{ \beta_h } \sqrt{\frac{\|A\|^3}{\alpha_h}} 
				+ \frac{\|A\| \|B\|}{\beta_h^2} \right) 
			\inf_{v_h \in {\mathcal{V}}_h} \|u - v_h\|_{\mathcal{V}} \\
		+ \frac{3 \|B\|}{\beta_h } \sqrt{\frac{\|A\|}{\alpha_h}}
		\inf_{q_h \in \mathcal{Q}_h} \|p - q_h\|_{\mathcal{Q}}.
\end{multline}
\end{subequations}
We note that these estimates are insufficient to explain the 
behavior of the numerical examples. In particular, these
estimates only guarantee that $\mat{\sigma}_h \equiv \mat{0}$ for the
strongly symmetric schemes if $\mat{\sigma} \in \Sigma_h$ and $\vec{u} \in V_h$, 
while the weakly symmetric schemes additionally require that 
$\mat{\omega} \in \Xi_h$.
However, our discrete spaces have additional structure which can be
used to obtain better error estimates. We first
consider the structure-preserving case 
$\ker \mathcal{B}_h \subseteq \ker \mathcal{B}$, which are satisfied
by the strongly symmetric schemes considered here. 
Then, we will consider the case
that $\mathcal{Q}_h$ is a product of two spaces and the discretizations are 
only structure-preserving on one of the spaces, as is the case for
the weakly symmetric schemes considered here.

\subsubsection{Structuring-preserving discretizations}

We first assume that the discretization is structure preserving so that
$\ker \mathcal{B}_h \subseteq \ker \mathcal{B}$. The first error estimate
follows from standard arguments 
(see e.g. \cite[Chapter 5.2.2]{BoffiBrezziFortin13}).
\begin{lemma}
	\label{lem:best-approx-kernel-inclusion}
	Suppose that $\ker \mathcal{B}_h \subseteq \ker \mathcal{B}$. 
	Let $u \in \mathcal{V}$ and
	$p \in \mathcal{Q}$ satisfy \cref{eq:saddle} and
	$u_h \in \mathcal{V}_h$ and $p_h \in \mathcal{Q}_h$ satisfy 
	\cref{eq:saddle-discrete}. 
	Then, there holds
	\begin{subequations}
		\label{eq:best-approx-kernel-inclusion}
		\begin{align}
			\label{eq:best-approx-kernel-inclusion-u}
			\|u - u_h\|_{\mathcal{V}} &\leq 
				2\sqrt{\frac{\|A\|}{\alpha_h}}
				\inf_{w_h \in \mathcal{Z}_h( \mathcal{B} u)} 
				\|u - w_h\|_{\mathcal{V}}, \\
			\label{eq:best-approx-kernel-inclusion-p}
			\|p - p_h\|_{\mathcal{Q}} &\leq 
			\frac{2}{\beta_h }\sqrt{\frac{ \|A\|^3}{\alpha_h}} 
				\inf_{w_h \in \mathcal{Z}_h(\mathcal{B} u)} 
					\|u - w_h\|_{\mathcal{V}}
			+ \inf_{r_h \in \mathcal{Z}_h^*(\mathcal{B}^t p)} 
				\|p - r_h\|_{\mathcal{Q}}.
		\end{align}
	\end{subequations}
\end{lemma}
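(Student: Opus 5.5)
The plan is to run the standard kernel-based argument of \cite[Chapter 5.2.2]{BoffiBrezziFortin13}, using the inclusion $\ker \mathcal{B}_h \subseteq \ker \mathcal{B}$ to remove the consistency term that normally couples the velocity error to the best approximation of $p$.

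\textbf{Velocity estimate.} Fix $w_h \in \mathcal{Z}_h(\mathcal{B} u)$. Since $B(u, q_h) = B(w_h, q_h)$ for all $q_h \in \mathcal{Q}_h$ and $u_h \in \mathcal{Z}_h(G) = \mathcal{Z}_h(\mathcal{B}u)$, the difference $z_h := u_h - w_h$ lies in $\ker \mathcal{B}_h$. Subtracting the first equations of \cref{eq:saddle} and \cref{eq:saddle-discrete} gives, for all $v_h \in \mathcal{V}_h$,
\begin{align*}
A(u - u_h, v_h) + B(v_h, p - p_h) = 0.
\end{align*}
Testing with $v_h = z_h \in \ker \mathcal{B}_h \subseteq \ker \mathcal{B}$ makes both $B(z_h, p)$ and $B(z_h, p_h)$ vanish. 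Hence $A(u - u_h, z_h) = 0$, which rearranges to $A(z_h, z_h) = A(u - w_h, z_h)$. This is exactly where the structure-preserving hypothesis is used: without it, a term $B(z_h, p - q_h)$ would survive.

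To extract a bound, use that $A$ is symmetric and nonnegative, so Cauchy--Schwarz holds in the $A$-seminorm $|v|_A := A(v,v)^{1/2}$. This gives
\begin{align*}
|z_h|_A \le |u - w_h|_A \le \sqrt{\|A\|}\, \|u - w_h\|_{\mathcal{V}},
\end{align*}
and the discrete kernel coercivity then yields $\|z_h\|_{\mathcal{V}} \le \sqrt{\|A\|/\alpha_h}\, \|u - w_h\|_{\mathcal{V}}$. By the triangle inequality,
\begin{align*}
\|u - u_h\|_{\mathcal{V}} \le \bigl(1 + \sqrt{\|A\|/\alpha_h}\bigr) \|u - w_h\|_{\mathcal{V}}.
\end{align*}
Since $\alpha_h \le \|A\|$ (apply coercivity and boundedness to any nonzero kernel element), the factor is at most $2\sqrt{\|A\|/\alpha_h}$. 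Taking the infimum over $w_h$ gives \cref{eq:best-approx-kernel-inclusion-u}.

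\textbf{Pressure estimate.} Fix $r_h \in \mathcal{Z}_h^*(\mathcal{B}^t p)$, so $B(v_h, r_h) = B(v_h, p)$ for all $v_h \in \mathcal{V}_h$. Then
\begin{align*}
B(v_h, p_h - r_h) = B(v_h, p_h - p) = A(u - u_h, v_h).
\end{align*}
The discrete inf-sup condition \cref{eq:inf-sup-b-bilinear-discrete} gives $\beta_h \|p_h - r_h\|_{\mathcal{Q}} \le \|A\|\, \|u - u_h\|_{\mathcal{V}}$. Inserting \cref{eq:best-approx-kernel-inclusion-u} and applying the triangle inequality $\|p - p_h\|_{\mathcal{Q}} \le \|p - r_h\|_{\mathcal{Q}} + \|r_h - p_h\|_{\mathcal{Q}}$ produces the constant $\frac{2}{\beta_h}\sqrt{\|A\|^3/\alpha_h}$, which gives \cref{eq:best-approx-kernel-inclusion-p}.

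\textbf{Remaining checks.} The affine sets $\mathcal{Z}_h(\mathcal{B}u)$ and $\mathcal{Z}_h^*(\mathcal{B}^t p)$ must be nonempty for the infima to make sense. The first contains $u_h$ and the second contains $p_h + \Phi_h$-type corrections, both by the discrete inf-sup condition (cf.\ \cref{lem:saddle-perturbed-discrete-pressure}).

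The main obstacle is the velocity step. One has to make sure the consistency term genuinely cancels, which needs $B(z_h, p) = 0$ for the continuous $p$ and not only for elements of $\mathcal{Q}_h$; this is precisely the kernel inclusion. One also has to get the factor $2\sqrt{\|A\|/\alpha_h}$ rather than $\|A\|/\alpha_h$, which requires using the $A$-seminorm Cauchy--Schwarz inequality instead of plain continuity of $A$.
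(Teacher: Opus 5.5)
Your proof is correct and follows essentially the same route as the paper. You reduce to $A(u_h - w_h, v) = A(u - w_h, v)$ on $\ker\mathcal{B}_h$ via Galerkin orthogonality tested with $z_h$, where the paper obtains it via $F$; the remaining steps (Cauchy--Schwarz in the $A$-seminorm, $\alpha_h \le \|A\|$, and the discrete inf-sup condition applied to $B(v_h, p_h - r_h) = A(u - u_h, v_h)$) are the same. One small correction to your side remark: $\mathcal{Z}_h^*(\mathcal{B}^t p)$ is nonempty because the kernel inclusion makes $B(\cdot,p)$ vanish on $\ker\mathcal{B}_h$, which gives $\Phi_h p \in \mathcal{Z}_h^*(\mathcal{B}^t p)$; the discrete inf-sup condition alone does not give this.
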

\begin{proof}
	Let $w_h \in \mathcal{Z}_h(G)$. Then, $u_h - w_h \in \ker \mathcal{B}_h$, 
	and there holds
	\begin{align*}
		A(u_h - w_h, v) = F(v) - A(w_h, v) = A(u - w_h, v) 
			\qquad \forall v \in \ker \mathcal{B}_h.
	\end{align*}
	Consequently, \cite[Lemma 4.3.1]{BoffiBrezziFortin13} gives
	\begin{align*}
		A(u_h - w_h, u_h - w_h) &= A(u - w_h, u_h - w_h) \\
		&\leq \sqrt{A(u - w_h, u - w_h)} \sqrt{A(u_h - w_h, u_h - w_h)},
	\end{align*}
	and so
	\begin{align*}
		\alpha_h \|u_h - w_h\|_{\mathcal{V}}^2 \leq A(u_h - w_h, u_h - w_h) 
			\leq A(u - w_h, u - w_h) 
			\leq \|A\| \|u - w_h\|_{\mathcal{V}}^2.
	\end{align*}
	Noting that $\mathcal{Z}_h(G) = \mathcal{Z}_h(\mathcal{B} u)$
	and $\|A\| \geq \alpha_h$, we apply the 
	triangle inequality to obtain \cref{eq:best-approx-kernel-inclusion-u}.	
	Now, for any $r_h \in \mathcal{Z}_h^*(\mathcal{B}^t p)$, we have
	\begin{align*}
		B(v_h, p_h - r_h) = A(u - u_h, v_h) \qquad 
			\forall v_h \in \mathcal{V}_h.
	\end{align*}
	The inf-sup condition and \cref{eq:best-approx-kernel-inclusion-u} then 
	gives 
	\begin{align*}
		\|p_h - r_h\|_Q \leq \frac{2}{\beta_h }\sqrt{\frac{ \|A\|^3}{\alpha_h}} 
			 \inf_{w_h \in \mathcal{Z}_h(\mathcal{B} u)} 
			 	\|u - w_h\|_{\mathcal{V}}.
	\end{align*}
	Taking the infimum over all such $r_h$ and applying the triangle inequality
	completes the proof of \cref{eq:best-approx-kernel-inclusion-p}.
\end{proof}

Note that the infimums appearing on the RHS of 
\cref{eq:best-approx-kernel-inclusion-u} and
\cref{eq:best-approx-kernel-inclusion-p} are restricted to the affine 
spaces $\mathcal{Z}_h(\mathcal{B} u)$ and $\mathcal{Z}_h^*(\mathcal{B}^t p)$.
In some instances, these infimums can be estimated directly to obtain
convergence rates 
(see e.g. \cite[Sections 5 \& 6]{BabuskaSuri92} for the infimum in
\cref{eq:best-approx-kernel-inclusion-u} corresponding to the approximation
properties of divergence-free, continuous piecewise polynomial vector fields).
In other instances, the approximation properties of these affine spaces is less 
clear. 

A classical technique in the literature to obtain
an upper bound for the terms in \cref{eq:best-approx-kernel-inclusion-u} and
\cref{eq:best-approx-kernel-inclusion-p} is to use a Fortin operator 
(see e.g. \cite[Chapter 5.4.3]{BoffiBrezziFortin13}). 
Let $\pi_F : \mathcal{V} \to \mathcal{V}_h$
be a linear operator satisfying
\begin{subequations}
	\label{eq:v-fortin}
	\begin{alignat}{2}
		\label{eq:v-fortin-b-projection}
		b(v - \pi_F v, q_h) &= 0 \qquad 
			& & \forall q_h \in \mathcal{Q}_h, \ \forall v \in \mathcal{V}, \\
		\label{eq:v-fortin-stability}
		\|\pi_F v\|_{\mathcal{V}} &\leq C_F \|v\|_{\mathcal{V}} \qquad 
			& &\forall v \in \mathcal{V},
	\end{alignat}
\end{subequations}
for some $C_F > 0$. Since we are assuming that the discrete inf-sup condition
\cref{eq:inf-sup-b-bilinear-discrete} holds, such an operator always exists
with $C_F \leq \|B\| / \beta_h$ (see e.g. 
\cite[Remark 5.1.10]{BoffiBrezziFortin13}). However,
for some problems, particularly on anisotropic domains, $C_F$ may be smaller
(see \cite[Remark 4.1]{JohnLinkeMerdonNeilanRebholz17}
and references therein). Moreover, let $C_{\Phi}$ be the stability constant
of the operator $\Phi_h$ in \cref{lem:saddle-perturbed-discrete-pressure}:
\begin{align}
	\label{eq:q-fortin-stability-constant}
	C_{\Phi} := \sup_{\substack{q \in \mathcal{Q} \\ \|q\|_{\mathcal{Q}} = 1}}
		\|\Phi_h q\|_{\mathcal{Q}}.
\end{align}
Note that the inf-sup constant gives $C_{\Phi} \leq \|B\| / \beta_h$, but
$C_{\Phi}$ may be smaller; e.g. $C_{\Phi} = 1$ in the example in
\cref{rem:b-special-form}.
With these operators in hand, we have the following result.
\begin{lemma}
	\label{lem:best-approx-kernels}
	For all $u \in \mathcal{V}$, there holds
	\begin{align}
		\label{eq:best-approx-kernel-v}
		\inf_{w_h \in \mathcal{Z}_h(\mathcal{B }u)} \|u - w_h\|_{\mathcal{V}} 
			\leq (1 + C_F) \inf_{v_h \in \mathcal{V}_h} 
				\|u - v_h\|_{\mathcal{V}}.
	\end{align}
	Moreover, if  $\ker \mathcal{B}_h \subseteq \ker \mathcal{B}$, then for 
	all $p \in \mathcal{Q}$, there holds
	\begin{align}
		\label{eq:best-approx-kernel-q}
		\inf_{r_h \in \mathcal{Z}_h^*(\mathcal{B}^t p)} 
			\|p - r_h\|_{\mathcal{Q}} 
		\leq  (1 + C_{\Phi}) \inf_{q_h \in \mathcal{Q}_h} 
			\|p - q_h\|_{\mathcal{Q}}.
	\end{align}
\end{lemma}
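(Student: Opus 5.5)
For \cref{eq:best-approx-kernel-v}, I would use the Fortin operator directly. Fix $u \in \mathcal{V}$ and an arbitrary $v_h \in \mathcal{V}_h$, and set
\begin{align*}
	w_h := v_h + \pi_F(u - v_h).
\end{align*}
By \cref{eq:v-fortin-b-projection} (read with $B$ in place of $b$) and linearity, $B(w_h, q_h) = B(v_h, q_h) + B(u - v_h, q_h) = B(u, q_h)$ for all $q_h \in \mathcal{Q}_h$. Hence $w_h \in \mathcal{Z}_h(\mathcal{B}u)$. Moreover $u - w_h = (u - v_h) - \pi_F(u - v_h)$, so \cref{eq:v-fortin-stability} gives $\|u - w_h\|_{\mathcal{V}} \leq (1 + C_F)\|u - v_h\|_{\mathcal{V}}$. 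Taking the infimum over $v_h$ proves the first claim. This step is routine.

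For \cref{eq:best-approx-kernel-q}, I would proceed in the same way, with the operator $\Phi_h$ of \cref{lem:saddle-perturbed-discrete-pressure} in place of $\pi_F$. That lemma is stated under the hypothesis $\ker \mathcal{B}_h \subseteq \ker \mathcal{B}$, which is available here. Its existence part, which relies on the discrete inf-sup condition, gives a linear $\Phi_h : \mathcal{Q} \to \mathcal{Q}_h$ with $B(v_h, q - \Phi_h q) = 0$ for all $v_h \in \mathcal{V}_h$. Given any $q_h \in \mathcal{Q}_h$, set
\begin{align*}
	r_h := q_h + \Phi_h(p - q_h) \in \mathcal{Q}_h.
\end{align*}
Then $B(v_h, r_h) = B(v_h, q_h) + B(v_h, p - q_h) = B(v_h, p)$ for all $v_h \in \mathcal{V}_h$, so $r_h \in \mathcal{Z}_h^*(\mathcal{B}^t p)$. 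Also $p - r_h = (p - q_h) - \Phi_h(p - q_h)$, and the definition \cref{eq:q-fortin-stability-constant} gives $\|p - r_h\|_{\mathcal{Q}} \leq (1 + C_\Phi)\|p - q_h\|_{\mathcal{Q}}$. Taking the infimum over $q_h$ finishes the proof.

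The only point needing care is the role of the kernel-inclusion hypothesis. The construction of $\Phi_h$ in \cref{lem:saddle-perturbed-discrete-pressure} is tied to that hypothesis, and it also guarantees that $C_\Phi$ is the finite quantity in \cref{eq:q-fortin-stability-constant}, bounded by $\|B\|/\beta_h$. I would simply invoke the lemma as stated rather than re-derive $\Phi_h$, and check that linearity of $\Phi_h$ is used when splitting $\Phi_h(p - q_h)$. No further obstacle is expected.
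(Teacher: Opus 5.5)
Your proposal is correct and follows the paper's argument. The first bound is the standard Fortin-operator construction $w_h = v_h + \pi_F(u - v_h)$, which the paper cites from Ern--Guermond rather than writing out. The second uses exactly the paper's choice $r_h = q_h + \Phi_h(p - q_h)$, and your estimate $\|p - r_h\|_{\mathcal{Q}} \le \|p - q_h\|_{\mathcal{Q}} + \|\Phi_h(p-q_h)\|_{\mathcal{Q}}$ is the correct form of the step the paper writes with $\|r_h\|_{\mathcal{Q}}$ in place of $\|s_h\|_{\mathcal{Q}}$. One correction: the existence of $\Phi_h$ is what genuinely requires $\ker\mathcal{B}_h \subseteq \ker\mathcal{B}$, since $v_h \mapsto B(v_h,p)$ must vanish on $\ker\mathcal{B}_h$ to lie in the range of $\mathcal{B}_h^t$; the discrete inf-sup condition supplies only uniqueness and the bound $C_\Phi \le \|B\|/\beta_h$, not existence as you state.
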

\begin{proof}
	Inequality \cref{eq:best-approx-kernel-v} can be found in 
	e.g. \cite[Lemma 50.3]{ErnGuermondII21}. The proof of 
	\cref{eq:best-approx-kernel-q} is identical. We include it here for 
	completeness. Let $q_h \in \mathcal{Q}_h$ and define 
	$s_h = \Phi_h (p - q_h)$.
	Then, $r_h := s_h + q_h \in \mathcal{Z}_h^*(\mathcal{B}^t p)$ since
	$B(v_h, r_h) = B(v_h, p) = (\mathcal{B}^t p)(v_h)$ for all 
	$v_h \in \mathcal{V}_h$. Moreover, we have
	\begin{align*}
		\|p - r_h\|_{\mathcal{Q}} \leq \|p - q_h\|_{\mathcal{Q}} 
				+ \|r_h\|_{\mathcal{Q}}
			\leq (1 + C_{\Phi}) \|p - q_h\|_{\mathcal{Q}}.
	\end{align*}
	The result now follows by first bounding the LHS below
	by the infimum over all elements in $\mathcal{Z}_h^*(\mathcal{B}^t p)$
	and then taking the infimum over all $q_h \in \mathcal{Q}_h$.
\end{proof}

Combining \cref{lem:best-approx-kernel-inclusion} and
\cref{lem:best-approx-kernels} gives the following result.
\begin{corollary}
	\label{cor:best-approx-kernel-inclusion-full-inf}
	Suppose that $\ker \mathcal{B}_h \subseteq \ker \mathcal{B}$. 
	Let $u \in \mathcal{V}$ and
	$p \in \mathcal{Q}$ satisfy \cref{eq:saddle} and
	$u_h \in \mathcal{V}_h$ and $p_h \in \mathcal{Q}_h$ satisfy 
	\cref{eq:saddle-discrete}. 
	Then, there holds
	\begin{subequations}
		\label{eq:best-approx-kernel-inclusion-full-inf}
		\begin{align}
			\label{eq:best-approx-kernel-inclusion-full-inf-u}
			\|u - u_h\|_{\mathcal{V}} &\leq 
				2(1 + C_F)\sqrt{\frac{\|A\|}{\alpha_h}}
				\inf_{v_h \in \mathcal{V}_h} 
					\|u - v_h\|_{\mathcal{V}}, \\
			\label{eq:best-approx-kernel-inclusion-full-inf-p}
			\|p - p_h\|_{\mathcal{Q}} &\leq 
			\frac{2(1 + C_F)}{\beta_h }\sqrt{\frac{ \|A\|^3}{\alpha_h}} 
				\inf_{v_h \in \mathcal{V}_h} 
					\|u - v_h\|_{\mathcal{V}}
			+ (1 + C_{\Phi}) \inf_{r_h \in \mathcal{Q}_h} 
				\|p - r_h\|_{\mathcal{Q}},
		\end{align}
	\end{subequations}
	where $C_F$ is defined in \cref{eq:v-fortin-stability}
	and $C_{\Phi}$ in \cref{eq:q-fortin-stability-constant}
\end{corollary}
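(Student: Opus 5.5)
The corollary follows by substituting the bounds of \cref{lem:best-approx-kernels} into the right-hand sides of \cref{lem:best-approx-kernel-inclusion}. The hypothesis $\ker \mathcal{B}_h \subseteq \ker \mathcal{B}$ is needed in two places. It is required by \cref{lem:best-approx-kernel-inclusion}. It is also required by the second estimate \cref{eq:best-approx-kernel-q}, which relies on the operator $\Phi_h$ from \cref{lem:saddle-perturbed-discrete-pressure}. So both lemmas apply under the stated assumptions.

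\textbf{Velocity estimate.} I start from \cref{eq:best-approx-kernel-inclusion-u}. Its right-hand side contains $\inf_{w_h \in \mathcal{Z}_h(\mathcal{B} u)} \|u - w_h\|_{\mathcal{V}}$. By \cref{eq:best-approx-kernel-v}, this infimum is at most $(1 + C_F)\inf_{v_h \in \mathcal{V}_h}\|u - v_h\|_{\mathcal{V}}$. Multiplying by $2\sqrt{\|A\|/\alpha_h}$ gives \cref{eq:best-approx-kernel-inclusion-full-inf-u} directly.

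\textbf{Pressure estimate.} I apply the same substitution to the first term of \cref{eq:best-approx-kernel-inclusion-p}. This produces the coefficient $\frac{2(1+C_F)}{\beta_h}\sqrt{\|A\|^3/\alpha_h}$. For the second term, I bound $\inf_{r_h \in \mathcal{Z}_h^*(\mathcal{B}^t p)}\|p - r_h\|_{\mathcal{Q}}$ by \cref{eq:best-approx-kernel-q}. This yields $(1 + C_\Phi)\inf_{r_h \in \mathcal{Q}_h}\|p - r_h\|_{\mathcal{Q}}$. Adding the two bounds gives \cref{eq:best-approx-kernel-inclusion-full-inf-p}.

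\textbf{Checks.} There is no genuine obstacle; the only points to verify are the following.

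First, the affine sets must be nonempty, so that the infima are meaningful. $\mathcal{Z}_h(\mathcal{B}u)$ contains $\pi_F u$ by \cref{eq:v-fortin-b-projection}. $\mathcal{Z}_h^*(\mathcal{B}^t p)$ contains $\Phi_h p$ by \cref{eq:pressure-fortin}.

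Second, the Fortin operator $\pi_F$ with constant $C_F$ must exist. This follows from the discrete inf-sup condition \cref{eq:inf-sup-b-bilinear-discrete}.

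Third, $G = \mathcal{B}u$ as a functional on $\mathcal{Q}_h$. This holds because $u$ satisfies \cref{eq:saddle}, and it is already noted in the proof of \cref{lem:best-approx-kernel-inclusion}.
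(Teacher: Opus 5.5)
Your proposal is correct and is exactly the paper's argument: the corollary follows by inserting the bounds \eqref{eq:best-approx-kernel-v} and \eqref{eq:best-approx-kernel-q} of Lemma~\ref{lem:best-approx-kernels} into the two estimates of Lemma~\ref{lem:best-approx-kernel-inclusion}, and the kernel inclusion hypothesis is what allows both lemmas to be applied. Your additional checks are valid and make explicit what the paper leaves implicit: the affine sets are nonempty because they contain $\pi_F u$ and $\Phi_h p$, and $G = \mathcal{B}u$ holds on $\mathcal{Q}_h$.
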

\noindent A similar result to \cref{cor:best-approx-kernel-inclusion-full-inf} 
appears in \cite[Theorem 5.2.4]{BoffiBrezziFortin13} 
and \cite[Theorem 2]{FalkOsborn80} but 
with harsher dependence on the constants since $A(\cdot,\cdot)$ is not assumed 
to be symmetric and nonnegative.

\subsubsection{Partially structure-preserving discretizations}

We now suppose that $\mathcal{Q}$ is of the form 
$\mathcal{Q} = \mathcal{Q}_1 \times \mathcal{Q}_2$, where $\mathcal{Q}_i$
is a Hilbert spaces with inner product 
$(\cdot,\cdot)_{\mathcal{Q}_i}$ and induced norm $\|\cdot\|_{\mathcal{Q}_i}$,
$i \in \{1,2\}$. We similarly suppose that the discrete space
$\mathcal{Q}_h \subset \mathcal{Q}$ is of the form
$\mathcal{Q}_h = \mathcal{Q}_{1, h} \times \mathcal{Q}_{2, h}$,
where $\mathcal{Q}_{i, h} \subset \mathcal{Q}_{i}$, $i \in \{1,2\}$.

We consider a bilinear form $B(\cdot,\cdot)$ and linear 
functional $G$ of the form
\begin{alignat*}{2}
	B(v, p) &= B_1(v, p_1) + B_2(v, p_2) \qquad 
		& &\forall p = (p_1, p_2) \in \mathcal{Q}_1 \times \mathcal{Q}_2, \
			\forall v \in \mathcal{V}, \\
	G(p) &= G_1(p_1) + G_2(p_2) \qquad 
		& &\forall p = (p_1, p_2) \in \mathcal{Q}_1 \times \mathcal{Q}_2,
\end{alignat*}
where $B_i(\cdot,\cdot) : \mathcal{V} \times \mathcal{Q}_i \to \mathbb{R}$ are 
continuous bilinear forms with norms $\|B_i\|$
and $G_i(\cdot)$ are continuous linear functionals on $\mathcal{Q}_i$, 
$i \in \{1,2\}$. We define the operators $\mathcal{B}_i$, $\mathcal{B}_i^t$, 
$\mathcal{B}_{i, h}$, and $\mathcal{B}_{i, h}^t$ and
affine spaces $\mathcal{Z}_{i, h}(G_i)$ and $\mathcal{Z}_{i, h}^*(F)$ 
analogously to above.

If we assume that the discretization is only structure-preserving with respect
to $\mathcal{B}_1$ in the sense that 
$\ker \mathcal{B}_{1, h} \subseteq \ker \mathcal{B}_1$ but make no additional
assumption on $\ker \mathcal{B}_{2, h}$, then we have the following error 
estimates.

\begin{lemma}
	Suppose that $\ker \mathcal{B}_{1, h} \subseteq \ker \mathcal{B}_1$. 
	Let $u \in \mathcal{V}$ and
	$p = (p_1, p_2) \in \mathcal{Q}_1 \times \mathcal{Q}_2$ satisfy 
	\cref{eq:saddle} and
	$u_h \in \mathcal{V}_h$ and 
	$p_h = (p_{1, h}, p_{2, h}) \in \mathcal{Q}_{1,h} \times \mathcal{Q}_{2, h}$
	satisfy \cref{eq:saddle-discrete}. Then, there holds
	\begin{subequations}
		\label{eq:best-approx-kernel-partial-inclusion}
		\begin{align} 
		\label{eq:best-approx-kernel-partial-inclusion-u}
		\begin{aligned}
		\|u - u_h\|_{\mathcal{V}} &\leq 2\left( \frac{\|A\|}{\alpha_h} + 
			\frac{\|B_2\|}{ \beta_h} \sqrt{\frac{\|A\|}{\alpha_h}} \right)  
			\inf_{w_h \in \mathcal{Z}_{1, h}(G_1)}\|u - w_h\|_{\mathcal{V}} \\
		&\qquad + \frac{\|B_2\|}{\alpha_h} 
			\inf_{r_{2, h} \in \mathcal{Q}_{2, h}} 
			\|p_2 - r_{2, h}\|_{\mathcal{Q}_2},
		\end{aligned}
	\end{align}
	\begin{align}
		\label{eq:best-approx-kernel-partial-inclusion-p1}
		\begin{aligned}
		\|p_1 - p_{1, h}\|_{\mathcal{Q}_1} &\leq 
		\left( \frac{2\|A\|^2}{\alpha_h \beta_h} 
			+ \frac{4\|B_2\|}{\beta_h^2} \sqrt{\frac{\|A\|^3}{\alpha_h}}
			+ \frac{\|A\| \|B_2\|^2}{\beta_h^3} \right)
		 \inf_{w_h \in \mathcal{Z}_{1, h}(G_1)}\|u - w_h\|_{\mathcal{V}} \\
		 & \qquad
		 + \inf_{r_{1, h} \in \mathcal{Z}_{1, h}^*(\mathcal{B}_1^t p_1)}
		 	\|p_1 - r_{1, h}\|_{\mathcal{Q}_1} \\
		& \qquad
		+ \frac{\|B_2\|}{\beta_h}\left( \frac{\|A\|}{\alpha_h} 
			+ \frac{3 \|B_2\|}{\beta_h} \sqrt{\frac{\|A\|^3}{\alpha_h}} 
		\right)
		\inf_{r_{2, h} \in \mathcal{Q}_{2, h}}
			\|p_2 - r_{2,  h}\|_{\mathcal{Q}_2},
		\end{aligned}
	\end{align} 
	and
	\begin{align}
		\label{eq:best-approx-kernel-partial-inclusion-p2}
		\begin{aligned}
		\|p_2 - p_{2, h}\|_{\mathcal{Q}_2} &\leq \left( 
			\frac{2}{\beta_h} \sqrt{\frac{\|A\|^3}{\alpha_h}}
		+ \frac{\|A\| \|B_2\|}{\beta_h^2}  \right) 
			\inf_{w_h \in \mathcal{Z}_{1, h}(G_1)} \|u - w_h\|_{\mathcal{V}} \\
		&\qquad
		+ \frac{3 \|B_2\|}{\beta_h} \sqrt{\frac{\|A\|}{\alpha_h}}  
		  \inf_{r_{2, h} \in \mathcal{Q}_{2, h}}
		  	\|p_2 - r_{2,  h}\|_{\mathcal{Q}_2}.
		\end{aligned}
	\end{align} 
	\end{subequations}
\end{lemma}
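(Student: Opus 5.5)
The plan is to mimic the proof of \cref{lem:best-approx-kernel-inclusion}, treating $B_2$ as a consistency perturbation. Fix $w_h \in \mathcal{Z}_{1,h}(G_1)$; this means $B_1(w_h, q_{1,h}) = G_1(q_{1,h})$ for all $q_{1,h}$, but we need no control on $B_2$. First I correct $w_h$ so that it also satisfies the second constraint. By the discrete inf-sup condition \cref{eq:inf-sup-b-bilinear-discrete}, restricted to test pairs $(0, q_{2,h})$, there is $\delta_h \in \mathcal{V}_h$ with $B_1(\delta_h, \cdot) = 0$ on $\mathcal{Q}_{1,h}$ and $B_2(\delta_h, q_{2,h}) = G_2(q_{2,h}) - B_2(w_h, q_{2,h})$. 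Moreover $\|\delta_h\|_{\mathcal{V}} \le \beta_h^{-1}\|B_2\|\,\|u - w_h\|_{\mathcal{V}}$, because $G_2 = B_2(u,\cdot)$. Then $\tilde w_h := w_h + \delta_h \in \mathcal{Z}_h(G)$ and
\[
\|u - \tilde w_h\|_{\mathcal{V}} \le \Bigl(1 + \tfrac{\|B_2\|}{\beta_h}\Bigr)\|u - w_h\|_{\mathcal{V}} .
\]

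Next, set $e_h := u_h - \tilde w_h \in \ker \mathcal{B}_h$. For any $v_h \in \ker\mathcal{B}_h$ the error equation reads
\[
A(u_h - u, v_h) = B(v_h, p - p_h) = B_1(v_h, p_1) + B_2(v_h, p_2 - r_{2,h}) \qquad \forall r_{2,h} \in \mathcal{Q}_{2,h}.
\]
Here we used $B(v_h, p_h) = 0$ and $B_2(v_h, r_{2,h}) = 0$. The key point is that $B_1(v_h, p_1) = 0$. Indeed $\ker\mathcal{B}_h \subseteq \ker\mathcal{B}_{1,h}$, since test pairs $(q_{1,h},0)$ are admissible, and $\ker\mathcal{B}_{1,h} \subseteq \ker\mathcal{B}_1$ by hypothesis. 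This is exactly where the partial structure preservation enters, and it removes any best-approximation term in $p_1$ from the $u$-estimate.

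Testing with $v_h = e_h$ and using coercivity on $\ker\mathcal{B}_h$ together with the symmetric Cauchy--Schwarz bound of \cite[Lemma 4.3.1]{BoffiBrezziFortin13} gives
\[
\alpha_h\|e_h\|_{\mathcal{V}}^2 \le \sqrt{\|A\|}\,\sqrt{A(e_h,e_h)}\;\|u - \tilde w_h\|_{\mathcal{V}} + \|B_2\|\,\|e_h\|_{\mathcal{V}}\,\|p_2 - r_{2,h}\|_{\mathcal{Q}_2}.
\]
Dividing appropriately and then applying the triangle inequality with $u - \tilde w_h$ yields \cref{eq:best-approx-kernel-partial-inclusion-u}. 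The exact constants $2(\|A\|/\alpha_h + \ldots)$ come from using $\|A\|\ge\alpha_h$ and the correction factor above; I expect some bookkeeping to match them, possibly by estimating $A(e_h,e_h)^{1/2}$ more crudely.

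For the multipliers, take $r_{1,h} \in \mathcal{Z}^*_{1,h}(\mathcal{B}_1^t p_1)$ and any $r_{2,h}$. Then for all $v_h$,
\[
B(v_h, (p_{1,h} - r_{1,h},\, p_{2,h} - r_{2,h})) = A(u - u_h, v_h) + B_2(v_h, p_2 - r_{2,h}).
\]
The inf-sup condition then bounds the combined discrete difference by $\beta_h^{-1}\bigl(\|A\|\,\|u-u_h\|_{\mathcal{V}} + \|B_2\|\,\|p_2 - r_{2,h}\|_{\mathcal{Q}_2}\bigr)$. Inserting the $u$-estimate and applying the triangle inequality gives \cref{eq:best-approx-kernel-partial-inclusion-p2}. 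For $p_1$ the triangle inequality produces the same bound plus the term $\inf_{r_{1,h}}\|p_1 - r_{1,h}\|_{\mathcal{Q}_1}$, which gives \cref{eq:best-approx-kernel-partial-inclusion-p1}.

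The main obstacle I foresee is reproducing the precise constants, especially the $\beta_h^{-2}$ and $\beta_h^{-3}$ terms in \cref{eq:best-approx-kernel-partial-inclusion-p1}. These suggest the authors bound $p_1$ by first bounding $p_2 - p_{2,h}$ and then substituting that bound. If the plan above yields better constants, that still proves the stated inequalities. Otherwise I would follow that two-step route: first estimate $p_{2,h}$, then treat $B_2(v_h, p_2 - p_{2,h})$ as a known right-hand side when applying inf-sup for $p_{1,h} - r_{1,h}$.
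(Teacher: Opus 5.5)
Your route differs from the paper's. The paper never lifts $w_h$ into $\mathcal{Z}_h(G)$. Instead it observes that $(u_h-w_h,\,p_{2,h}-r_{2,h})$ solves a saddle point problem on $\mathcal{Z}_{1,h}(0)\times\mathcal{Q}_{2,h}$ with data $A(u-w_h,\cdot)+B_2(\cdot,p_2-r_{2,h})$ and $B_2(u-w_h,\cdot)$; the $B_1$ terms vanish by exactly your kernel-inclusion argument. It then proves that $B_2$ is inf-sup stable on this pair with constant $\beta_h$, using the lifting you use for $\delta_h$. Finally it quotes \cite[Theorem 5.2.1]{BoffiBrezziFortin13}, whose sharp constants for symmetric semidefinite $A$ give the $u$- and $p_2$-bounds at once, and $p_1$ follows by the two-step route you guessed.

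Your Strang-type argument on the full kernel $\ker\mathcal{B}_h$ is a legitimate, more self-contained alternative. Bounding $(p_{1,h}-r_{1,h},p_{2,h}-r_{2,h})$ jointly by the full inf-sup is actually better for $p_1$ than the paper's two-step route. One small correction: producing $\delta_h$ with $B_1(\delta_h,\cdot)=0$ on $\mathcal{Q}_{1,h}$ needs surjectivity of the full $\mathcal{B}_h$ onto $\mathcal{Q}_h'$, applied to $(q_{1,h},q_{2,h})\mapsto B_2(u-w_h,q_{2,h})$. An inf-sup restricted to pairs $(0,q_{2,h})$ does not give it.

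The gap is in the constants, which are part of the statement. Write $\kappa=\|A\|/\alpha_h\ge1$, $t=\|B_2\|/\beta_h\ge1$, $W=\|u-w_h\|_{\mathcal{V}}$ and $P=\|p_2-r_{2,h}\|_{\mathcal{Q}_2}$.

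In the $u$-step, the ``cruder'' estimate $A(e_h,e_h)^{1/2}\le\|A\|^{1/2}\|e_h\|$ gives $(1+\kappa)(1+t)W$, which is not bounded by $2(\kappa+t\sqrt\kappa)W$ in general. Instead keep $A(e_h,e_h)=X^2$ on the left and use $\|e_h\|\le X/\sqrt{\alpha_h}$. This gives $\|u-u_h\|\le(1+\sqrt\kappa)(1+t)W+\|B_2\|P/\alpha_h$, which suffices.

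The $p_2$ step genuinely fails as planned. Inserting $\|A\|\,\|u-u_h\|$ into the inf-sup bound gives a $W$-coefficient $\frac{\|A\|}{\beta_h}(1+\sqrt\kappa)(1+t)$, which always exceeds the stated $\frac{2\|A\|\sqrt\kappa}{\beta_h}+\frac{\|A\|\|B_2\|}{\beta_h^2}$. It also gives a $P$-coefficient $1+t+t\kappa$, which exceeds $3t\sqrt\kappa$ once $\kappa>9$. Your proposed fallback targets $p_1$, not this.

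The missing idea is the one behind the sharp multiplier bound in the cited theorem. Use $A(u-u_h,v_h)\le\|A\|^{1/2}A(u-u_h,u-u_h)^{1/2}\|v_h\|$ together with
\[
A(u-u_h,u-u_h)=A(u-\tilde w_h,u-\tilde w_h)+2B_2(e_h,p_2-r_{2,h})-A(e_h,e_h)\le\|A\|\,\|u-\tilde w_h\|_{\mathcal{V}}^2+\|B_2\|^2P^2/\alpha_h .
\]
This yields $\beta_h\|(p_{1,h}-r_{1,h},\,p_{2,h}-r_{2,h})\|_{\mathcal{Q}}\le\|A\|(1+t)W+\|B_2\|(1+\sqrt\kappa)P$. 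That implies the stated $p_2$ estimate, and the same bound for $\|p_{1,h}-r_{1,h}\|_{\mathcal{Q}_1}$ beats the paper's $p_1$ constants.

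Finally, the last coefficient of the stated $p_1$ bound contains $\sqrt{\|A\|^3/\alpha_h}$. This is dimensionally inconsistent and appears to be a typo for $\sqrt{\|A\|/\alpha_h}$, which is what the paper's own substitution produces, so do not try to match it literally.
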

\begin{proof}
	Let $w_h \in \mathcal{Z}_{1, h}(G_1)$ and $r_{2, h} \in \mathcal{Q}_{2, h}$.
	Then, there holds
	\begin{align*}
		A(u_h - w_h, v_h) + B_2(v_h, p_{2, h} - r_{2, h}) &= A(u - w_h, v_h) 
		+ B_2(v_h, p_2 - r_{2, h}), \\
		B_{2}(u_h - w_h, q_{2, h}) &= B_2(u - w_h, q_{2, h}),
	\end{align*}
	for all $v_h \in \mathcal{Z}_{1, h}(0)$ and 
	$q_{2, h} \in \mathcal{Q}_{2, h}$. First note that the following 
	inf-sup condition restricted to 
	$\mathcal{Z}_{1, h}(0) \times \mathcal{Q}_{2, h}$ holds for $B_2$:
	\begin{align}
		\label{eq:inf-sup-h-b-bilinear-kernel}
		\inf_{q_{2, h} \in \mathcal{Q}_{2, h} } 
		\sup_{ v_h \in \mathcal{Z}_{1, h}(0) } 
		\frac{ B_2(v_h, q_{2, h}) }{ \|v_h\|_{\mathcal{V}} 
			\|q_{2, h}\|_{\mathcal{Q}_2} } 
		\geq \beta_h.  
	\end{align}
	Indeed, a consequence full inf-sup condition 
	\cref{eq:inf-sup-b-bilinear-discrete} is that for every 
	$q_{2, h} \in \mathcal{Q}_{2, h}$, there exists $v_h \in \mathcal{V}_h$
	such that 
	\begin{align*}
		B_1(v_h, r_{1, h}) + B_2(v_h, r_{2, h}) 
			= (q_{2, h}, r_{2, h})_{\mathcal{Q}_2} 
		\qquad \forall r_{i,h} \in \mathcal{Q}_{i, h}, i \in \{1,2\},
	\end{align*}
	and $\|v_h\|_{\mathcal{V}} \leq \beta_h^{-1} \|q_{2, h}\|_{\mathcal{Q}_2}$
	(see e.g. \cite[Remark 5.1.10]{BoffiBrezziFortin13}). Clearly, 
	$v_h \in \mathcal{Z}_{1, h}(0)$, and so 
	\cref{eq:inf-sup-h-b-bilinear-kernel} follows.

	We now apply \cite[Theorem  5.2.1]{BoffiBrezziFortin13} to obtain
	\begin{align*}
		\|u_h - w_h\|_{\mathcal{V}} &\leq \left( \frac{\|A\|}{\alpha_h} + 
		\frac{2 \|B_2\|}{ \beta_h} \sqrt{\frac{\|A\|}{\alpha_h}} \right)  
			\|u - w_h\|_{\mathcal{V}} +  
		\frac{\|B_2\|}{\alpha_h}  \|p_2 - r_{2, h}\|_{\mathcal{Q}_2} \\
		\|p_{2, h} - r_{2, h}\|_{\mathcal{Q}_2} &\leq \left( 
			\frac{2}{\beta_h} \sqrt{\frac{\|A\|^3}{\alpha_h}}
		+ \frac{\|A\| \|B_2\|}{\beta_h^2}  \right) \|u - w_h\|_{\mathcal{V}} 
		  + \frac{2 \|B_2\|}{\beta_h} \sqrt{\frac{\|A\|}{\alpha_h}}  
		  \|p_2 - r_{2,  h}\|_{\mathcal{Q}_2}. 
	\end{align*}
	Using the triangle inequality and taking the infimum over all $w_h$
	and $r_{2, h}$ then gives 
	\cref{eq:best-approx-kernel-partial-inclusion-u,%
	eq:best-approx-kernel-partial-inclusion-p2}.

	We now turn to $p_1 - p_{1, h}$. For any 
	$r_{1, h} \in \mathcal{Z}_{1, h}^*(\mathcal{B}_1^t p_1)$, there
	holds
	\begin{align*}
		B_1(v_h, p_{1, h} - r_{1, h}) = A(u - u_h, v_h) 
			+ B_2(v_h, p_2 - p_{2, h})
		\qquad \forall v_h \in \mathcal{V}_h.
	\end{align*}
	Applying the inf-sup condition \cref{eq:inf-sup-b-bilinear-discrete},
	we obtain
	\begin{align*}
		\|p_{1, h} - r_{1, h}\|_{\mathcal{Q}_1} 
			\leq \frac{\|A\|}{\beta_h} \|u - u_h\|_{\mathcal{V}}
				+ \frac{\|B_2\|}{\beta_h} \|p_2 - p_{2, h}\|_{\mathcal{Q}_2}.
	\end{align*}
	Inequality \cref{eq:best-approx-kernel-partial-inclusion-p1} now follows.
\end{proof}

\begin{remark}
	\label{rem:best-approx-kernel-inclusion-full-inf}
	As in \cref{cor:best-approx-kernel-inclusion-full-inf}, we could further 
	replace the infimums in \cref{eq:best-approx-kernel-partial-inclusion} 
	by infimums over the full spaces and pick up a factor of 
	$(1 + C_F)$ and $(1 + C_{\Phi_1})$, where 
	$\Phi_1 : \mathcal{Q}_1 \to \mathcal{Q}_{1, h}$ is the operator
	in \cref{lem:saddle-perturbed-discrete-pressure} for $B_1$. 	
\end{remark}

\subsection{Applying error estimates to examples}

We now apply the error estimates above to the examples.
Note that $\lambda \in [0, 1] \cup \{\infty\}$, and so we have
$\|A\| \leq M/\mu$ and $\alpha_h \geq \alpha/\mu$, for constants
$M$ and $\alpha$ independent of $h$, $\mu$, and $\lambda$ 
(see \cref{lem:a-elliptic-elasticity} below). Moreover,
the ratio $\|B\|/\beta_h$ is bounded uniformly in $h$ as we have
chosen stable elements. Consequently, we will only highlight the
dependence of the constants in the forthcoming error estimates on
$\mu$.

For the symmetric elements, again assuming that 
$\nabla \cdot \Sigma_h^{\sym} = V_h$,
we apply \cref{eq:best-approx-kernel-inclusion} to obtain
\begin{subequations}
	\label{eq:symmetric-a-priori}
	\begin{align}
		\label{eq:symmetric-a-priori-1}
		\| \mat{\sigma} - \mat{\sigma}_h \|_{\mathrm{div}} 
			&\leq C \inf_{\mat{\tau}_h \in \Sigma_h^{\sym}} 
				\| \mat{\sigma} - \mat{\tau}_h \|_{\mathrm{div}} \\
		\label{eq:symmetric-a-priori-2}
		\|\vec{u} - \vec{u}_h\| &\leq C \left(  
				\frac{1}{\mu} \inf_{\mat{\tau}_h \in \Sigma_h^{\sym}} 
					\| \mat{\sigma} - \mat{\tau}_h \|_{\mathrm{div}} 
		+  \inf_{\vec{v}_h \in V_h} \| \vec{u} - \vec{v}_h\| \right).
	\end{align}
\end{subequations}
Whenever $\mat{\sigma} \in \Sigma^{\sym}_h$, we 
obtain $\mat{\sigma}_h = \mat{\sigma}$ from \cref{eq:symmetric-a-priori-1},
further demonstrating the material robustness property as 
$\mat{0} \in \Sigma^{\sym}_h$.
More generally, \cref{eq:symmetric-a-priori-1} shows that the stress 
error is independent of the error in $\vec{u}$, which is consistent with the 
behavior in \cref{sec:ex_summary} where $\mat{\sigma} \neq \mat{0}$ but 
is independent of the scaling parameter $\delta$.

For the weakly symmetric elements, also with $\nabla \cdot \Sigma_h = V_h$, 
we apply \cref{eq:best-approx-kernel-partial-inclusion} and 
\cref{rem:best-approx-kernel-inclusion-full-inf} to obtain 
\begin{subequations}
	\label{eq:weak-symmetric-a-priori}
	\begin{align}
		\label{eq:weak-symmetric-a-priori-1}
		\| \mat{\sigma} - \mat{\sigma}_h \|_{\mathrm{div}} &\leq 
		C \left(  \inf_{\mat{\tau}_h \in \Sigma_h} 
			\| \mat{\sigma} - \mat{\tau}_h \|_{\mathrm{div}} 
			+ \mu \inf_{\mat{\zeta}_h \in \Xi_h } 
			\| \mat{\omega} - \mat{\zeta}_h \| \right) \\
		\label{eq:weak-symmetric-a-priori-2}
		\|\vec{u} - \vec{u}_h\| &\leq C \left( 
			\frac{1}{\mu} \inf_{\mat{\tau}_h \in \Sigma_h} 
			\| \mat{\sigma} - \mat{\tau}_h \|_{\mathrm{div}} 
			+ \inf_{\vec{v}_h \in V_h} \| \vec{u} - \vec{v}_h\| 
			+ \inf_{\mat{\zeta}_h \in \Xi_h } \| \mat{\omega} - \mat{\zeta}_h \| 
			\right)  \\
		\label{eq:weak-symmetric-a-priori-3}
		\|\mat{\omega} - \mat{\omega}_h\| &\leq C \left( 
			\frac{1}{\mu} \inf_{\mat{\tau}_h \in \Sigma_h} 
				\| \mat{\sigma} - \mat{\tau}_h \|_{\mathrm{div}} 
			+ \inf_{\mat{\zeta}_h \in \Xi_h} \| \mat{\omega} - \mat{\zeta}_h \|
			\right).
	\end{align}	
\end{subequations}
Note that inequality \cref{eq:weak-symmetric-a-priori-1} captures
the behavior we observed in \cref{sec:examples} --- if 
$\mat{\sigma} \equiv \mat{0}$ and $\mat{\omega} \in \Xi_h$, 
then $\mat{\sigma}_h \equiv \mat{0}$. If $\mat{\omega} \notin \Xi_h$, then
the error estimates do not guarantee that $\mat{\sigma}_h \equiv \mat{0}$,
and we indeed observed nonzero $\mat{\sigma}_h$ in this case, indicating 
a lack of material robustness.

The construction of the examples in \cref{sec:examples} may also 
be interpreted as follows.
We first find a displacement $\vec{u}^*$ in the kernel of the constitutive 
law (i.e.~\cref{eq:linear_constituitive_law_intro} holds with 
$\vec{u}^*$ and  $\mat{\sigma}^* = \mat{0}$) 
such that $\mat{\omega}^* := \anti (\nabla \vec{u}^*) \notin \Xi_h$. 
We then choose a solution containing a large component of $\mat{\omega}^*$ 
(i.e. $\mat{\omega} = \delta \mat{\omega}^* + \mat{\xi}$ with 
$\|\mat{\sigma}\| + \|\mat{\xi}\| \ll \delta \|\mat{\omega}^*\|$)
to demonstrate that the weakly symmetric schemes produce large errors. 
This construction is in line with the theory, as
the lack of the invariance property \cref{eq:structure-preserving-def}
and the error estimates \cref{eq:weak-symmetric-a-priori}
allow the weakly symmetric schemes to produce a much larger stress error 
(on the order of $\delta$ as $\delta \to \infty$) than the strongly symmetric 
schemes.

\subsection{General comments}
\label{sec:theory-general-comments}

The error estimates in \cref{eq:best-approx-kernel-partial-inclusion}
provide a general recipe for demonstrating ``non-robustness" of a scheme
that is not fully structure-preserving. For example, consider
a stable discretization of the saddle-point system \cref{eq:saddle} with
$\|A\| \leq C \upsilon$, $\alpha_h \geq \alpha \upsilon$ for some 
positive parameter $\upsilon$, and 
$\|B\|/\beta_h$ bounded uniformly in $h$ and $\upsilon$.
Then, \cref{eq:best-approx-kernel-partial-inclusion} reads
\begin{subequations}
	\label{eq:general-param-a-priori}
	\begin{align}
		\label{eq:general-param-a-priori-1}
		\| u - u_h \|_{\mathcal{V}} &\leq 
		C \left(  \inf_{w_h \in \mathcal{V}_h} 
			\| u - w_h \|_{\mathcal{V}} 
			+ \upsilon^{-1} \inf_{ r_{2, h} \in \mathcal{Q}_{2, h} } 
			\| p_{2} - r_{2, h} \|_{\mathcal{Q}_2} \right) \\
		\|p_1 - p_{1, h} \|_{\mathcal{Q}_1} &\leq C \left( 
			\upsilon \inf_{w_h \in \mathcal{V}_h} 
				\| u - w_h \|_{\mathcal{V}} 
			+ \inf_{r_{1, h} \in \mathcal{Q}_{1, h}} 
				\| p_1 - r_{1, h} \|_{\mathcal{Q}_1}
			+ \inf_{r_{2, h} \in \mathcal{Q}_{2, h} } 
				\| p_{2} - r_{2, h} \|_{\mathcal{Q}_2} 
			\right)  \\
		\|p_2 - p_{2,h} \|_{\mathcal{Q}_2} &\leq C \left( 
			\upsilon \inf_{w_h \in \mathcal{V}_h} 
				\| u - w_h \|_{\mathcal{V}} 
			+ \inf_{r_{2, h} \in \mathcal{Q}_{2, h} } 
				\| p_{2} - r_{2, h} \|_{\mathcal{Q}_2} 
			\right),
	\end{align}	
\end{subequations}
where $C$ is independent of $\upsilon$ and $h$. We will consider
$u_h$ the primary variable of interest. There are two ways to 
make the error estimate \cref{eq:general-param-a-priori-1} blow up
in a meaningful way:
\begin{enumerate}
	\item[(i)] Fix $\upsilon$ and find a sequence of solutions
	$u^k, p_1^k, p_2^k$ such that $\|u^k\|_{\mathcal{V}}$ is bounded,
	$p_2^k \notin \mathcal{Q}_{2, h}$,
	and $\|p_2^k\|_{\mathcal{Q}_2} \to \infty$ as $k \to \infty$. 

	\item [(ii)] For a sequence $\upsilon_k \to 0$, find
	solutions $u^k, p_1^k, p_2^k$ such that
	$\|u_k\|_{\mathcal{V}}$ is bounded and 
	$\inf_{ r_{2, h} \in \mathcal{Q}_{2, h} } 
			\| p_{2}^k - r_{2, h} \|_{\mathcal{Q}_2}$
	is bounded away from zero.
\end{enumerate}
In both cases, the best approximation term for $u_k$ in 
\cref{eq:general-param-a-priori-1} will remain bounded, 
while the remaining term in \cref{eq:general-param-a-priori-1} will blow up.
In contrast, for either scenario (i) or (ii), 
the error $\|u^k - u_h^k \|_{\mathcal{V}}$ for a structure preserving scheme 
($\ker \mathcal{B}_h \subset \ker \mathcal{B}$) will not blow up
thanks to \cref{eq:best-approx-kernel-inclusion-full-inf} 
since $\|u^k\|_{\mathcal{V}}$ is bounded. The examples in 
\cref{sec:examples} fit into (i). Whether scenario (i) or (ii)
can occur for realistic problem settings is application dependent.
We note that scenario (i) with $u^k \equiv 0$, 
$\mathcal{Q}_{1} = \mathcal{Q}_{1, h} = \{0\}$, and $\upsilon = 1$ is 
described in \cite[Remark 50.8]{ErnGuermondII21} in the 
general setting of structure-preserving versus standard discretizations.

In the context of incompressible flow, two commonly used terms
are ``pressure-robust'' \cite{JohnLinkeMerdonNeilanRebholz17} 
and ``Reynolds-(semi)-robust'' \cite{Schroder18} to describe various
properties of the discretizations.
Pressure robustness typically means that
the discrete velocity (and possibly other variables of interest) are
invariant if the external forcing is modified by the gradient of a suitable
scalar-valued function. Thus, pressure robustness is precisely the
invariance property in \cref{eq:structure-preserving-def}. Recall that 
\cref{lem:saddle-invariance-discrete} shows that 
the kernel inclusion $\ker \mathcal{B}_h \subset \ker \mathcal{B}$ is 
necessary for the discrete scheme to possess this invariance property
without modifying the discrete problem. Other methods which modify the 
functional $F(\cdot)$ in \cref{eq:saddle-perturbed-discrete-invariance} can 
also achieve the invariance property without requiring
$\ker \mathcal{B}_h \subset \ker \mathcal{B}$; see 
\cite[Section 5.2]{JohnLinkeMerdonNeilanRebholz17} for a discussion.
Reynolds robustness in the context of error estimates typically means
that the velocity error (and possibly the errors of other variables)
does not blow up (or not blow up too fast) as the viscosity tends to zero. 
Thus, schemes that are partially structure preserving 
(see e.g. \cite{Gopalakrishnan20} for a nonconforming scheme) 
are permitted provided that $\upsilon$ in \cref{eq:general-param-a-priori} is the 
inverse of the viscosity.

\section{Transient Example}
\label{sec:conclusions}

We conclude with some comparisons of the JMK and $\mathrm{AFW}_1$ schemes 
applied to a transient polar fluid.
Time-dependent problems are generally outside the framework presented in 
\cref{sec:theory}, as the structure of the variational problem has 
an additional term (see e.g. \cref{eq:time-dependent-strong-2} below).
Nevertheless, we will shortly observe that strongly enforcing symmetry
of the Cauchy stress tensor remains crucial for obtaining accurate 
solutions. We consider the following time-dependent extension 
of the polar fluid in \cref{sec:polar-2d} on $\Omega = (0, 1)^2$:
\begin{subequations}
	\label{eq:transient-polar}
	\begin{alignat}{2}
		\label{eq:transient-polar-1}
		\partial_t \vec{u}  - \div \mat{\sigma} &= 0 
			\qquad & &\text{in } \Omega \times (0, T), \\
		\label{eq:transient-polar-2}
		\frac{1}{2\mu} \mat{\sigma} - \symgrad(\vec{u}) 
			 &= \min\{1, t\} \left[  
				\frac{K_F}{2\mu} 
					\left( \nabla \vec{\nu}^{\top} \nabla\vec{\nu} \right)^D
				- \frac{1}{d} (\div \vec{u}_0) \mat{\mathbb{I}} \right] 
			\qquad & &\text{in } \Omega \times (0, T), \\
		\label{eq:transient-polar-3}
		\vec{u} &= \min\{ 1, t \} \vec{u}_0 \qquad & &
			\text{on } \partial \Omega \times (0, T), 
	\end{alignat}	
\end{subequations}
where 
\begin{equation*}
	\vec{\nu} = \begin{bmatrix}
	x \\ x + y
	\end{bmatrix}, 
	\quad 
	K_F(x,y) = \delta \sin(x)\cosh(y),
	\quad \text{and} \quad
	\vec{u}_0 (x,y) = \delta \begin{bmatrix}
		-\cos(x)\cosh(y) \\
		\sin(x)\sinh(y)
	\end{bmatrix}.
\end{equation*}
We take $\mu = 1$ and $\delta = 10^3$.
Note that we have assumed that we are in a regime where the inertial term 
$\vec{u} \cdot \nabla \vec{u}$ is negligible, due to the small velocity and/or 
high viscosity (after nondimensionalization). The fluid is initially at rest 
and the boundary conditions and scaling of $\mat{F}$ are chosen to 
drive the system to an equilibrium state corresponding to the same stress-free 
state in \cref{sec:polar-2d}.

The corresponding semi-discrete variational formulation with strongly imposed 
symmetry is:
For all $t \in (0, T)$, find $\mat{\sigma}_h(t) \in \Sigma^{\sym}_h$ and 
$\vec{u}_h(t) \in V_h$ such that
\begin{subequations}
	\label{eq:time-dependent-strong}
	\begin{alignat}{2}
		a(\mat{\sigma}_h(t), \mat{\tau}_h) + b(\mat{\tau}_h, \vec{u}_h(t)) 
			&= \langle \mat{\tau}_h \vec{n}, 
					\vec{u}_D(t) \rangle_{\partial \Omega} 
				+ (\mat{F}(t), \mat{\tau}_h)_{L^2(\Omega)} 
			\qquad & &\forall \mat{\tau}_h \in \Sigma_h^{\sym}, \\
		\label{eq:time-dependent-strong-2}
		b(\mat{\sigma}_h(t), \vec{v}_h) 
			- (\partial_t \vec{u}_h(t), \vec{v}_h) &= 0 
			\qquad & &\forall \vec{v}_h \in V_h,
	\end{alignat}	
\end{subequations}
where $\mat{F}(t)$ is the anisotropic forcing term on the RHS of 
\cref{eq:transient-polar-2}, $\vec{u}_D(t)$ is the boundary 
condition of the RHS of \cref{eq:transient-polar-3},  
$\mat{\sigma}_h(0) = \mat{0}$, and $\vec{u}_h(0) = \vec{0}$.
Similarly, the semi-discrete variational formulation with weakly imposed 
symmetry is:
For all $t \in (0, T)$, find $\mat{\sigma}_h(t) \in \Sigma_h$,  
$\vec{u}_h(t) \in V_h$, and $\mat{\omega}_h(t) \in \Xi_h$ such that 
\begin{subequations}
	\label{eq:time-dependent-weak}
	\begin{alignat}{2}
		a(\mat{\sigma}_h(t), \mat{\tau}_h) + b(\mat{\tau}_h, \vec{u}_h(t)) 
				+ c(\mat{\tau}_h, \mat{\omega}_h(t)) 
			&= \langle \mat{\tau}_h\vec{n}, 
					\vec{u}_D(t) \rangle_{\partial \Omega} 
				+ (\mat{F}(t), \mat{\tau}_h)_{L^2(\Omega)} 
			\qquad & &\forall \mat{\tau}_h \in \Sigma_h, \\
		b(\mat{\sigma}_h(t), \vec{v}_h) 
			- (\partial_t \vec{u}_h(t), \vec{v}_h)  &= 0 
			\qquad & &\forall \vec{v}_h \in V_h, \\		
		c(\mat{\sigma}_h(t), \mat{\xi}_h) &= 0 
			\qquad & &\forall \mat{\xi}_h \in \Xi_h,
	\end{alignat}
\end{subequations}
where $\mat{\sigma}_h(0) = \mat{0}$, $\vec{u}_h(0) = \vec{0}$, and 
$\mat{\omega}(0) = \mat{0}$. Since 
\cref{eq:time-dependent-strong,eq:time-dependent-weak} are systems of 
differential-algebraic equations, we discretize in time with a 2-stage 
Gauss-Radau IIA implicit Runge-Kutta method, automated by the 
\texttt{Irksome} package \cite{FarrellKirbyMM21,KirbyMaclachlan25},
with a time step size of $1/100$.
We choose the JMK scheme for \cref{eq:time-dependent-strong} and the 
$\mathrm{AFW}_1$ scheme for \cref{eq:time-dependent-weak}, both on 
an unstructured mesh of the unit square generated with
ngsPETSc with $\texttt{maxh} = 1/40$. The stage-coupled system 
is solved via sparse direct LU factorization using
MUMPS \cite{AmestoyDuff01}.

Snapshots of the magnitudes of the discrete stresses at 
$t \in \{0.5, 1, 1.5\}$ are displayed in \cref{fig:time-dependent}.
The discrete stress solutions for $t > 1.5$, not shown, are nearly identical
to the ones at $t=1.5$.
Until $t=1$, both solutions are visually identical. After $t=1$,
the material robust scheme $\mathrm{JMK}$ relaxes to a zero stress state,
the exact steady-state stress,
while the scheme $\mathrm{AFW}_1$ lacking material robustness relaxes to a 
solution that is far from a zero-stress state. Thus, one should also exercise 
caution when using schemes that are not material robust for transient problems.

\begin{figure}[htbp]
    \centering
	\begin{subfigure}[t][]{0.3\textwidth}
	    \includegraphics[width=0.9\textwidth]{%
			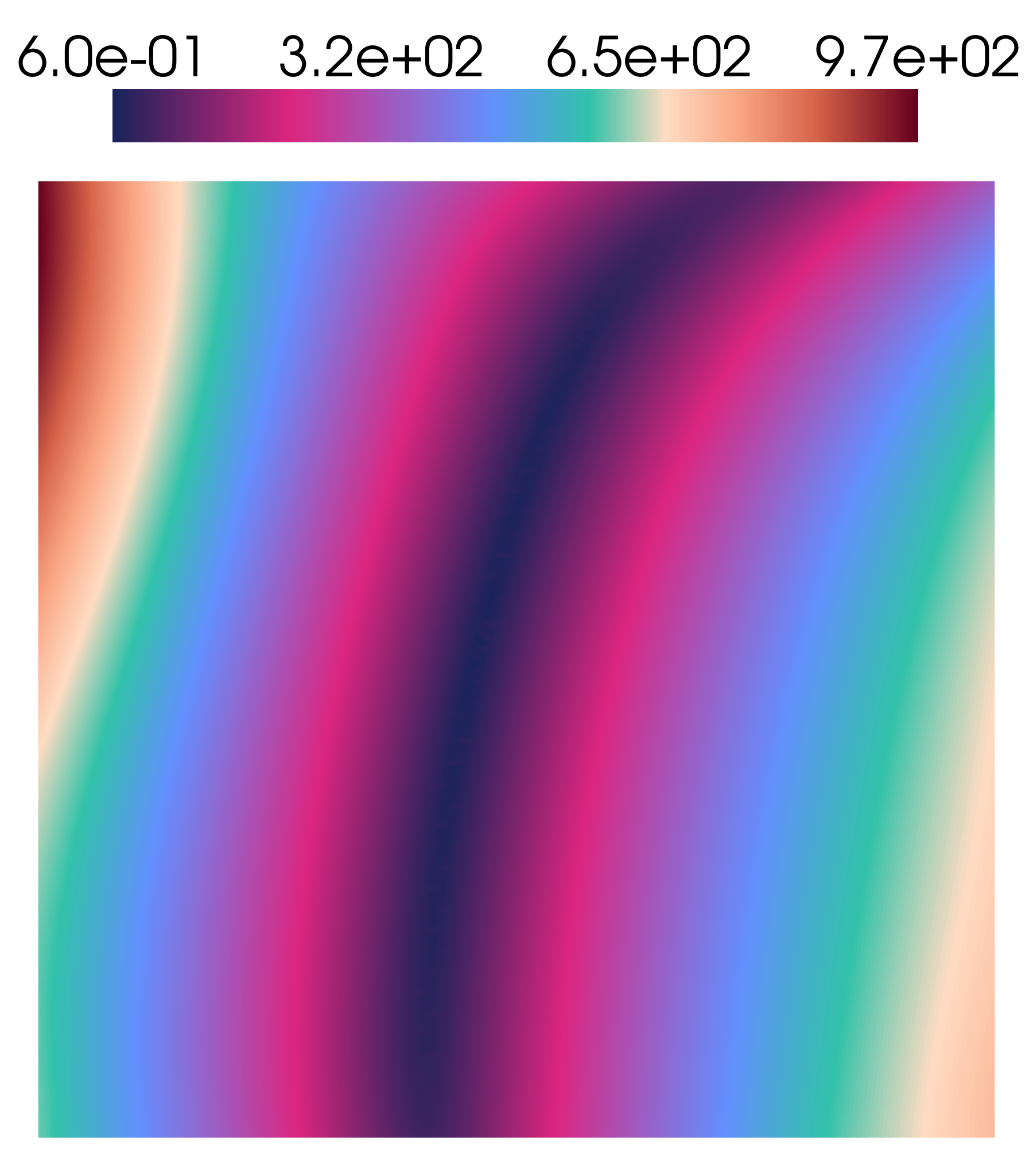}
	    \centering
	    \caption{$t=0.5,\; \mathrm{JMK}$}
    \end{subfigure}
    \begin{subfigure}[t][]{0.3\textwidth}
	    \includegraphics[width=0.9\textwidth]{%
			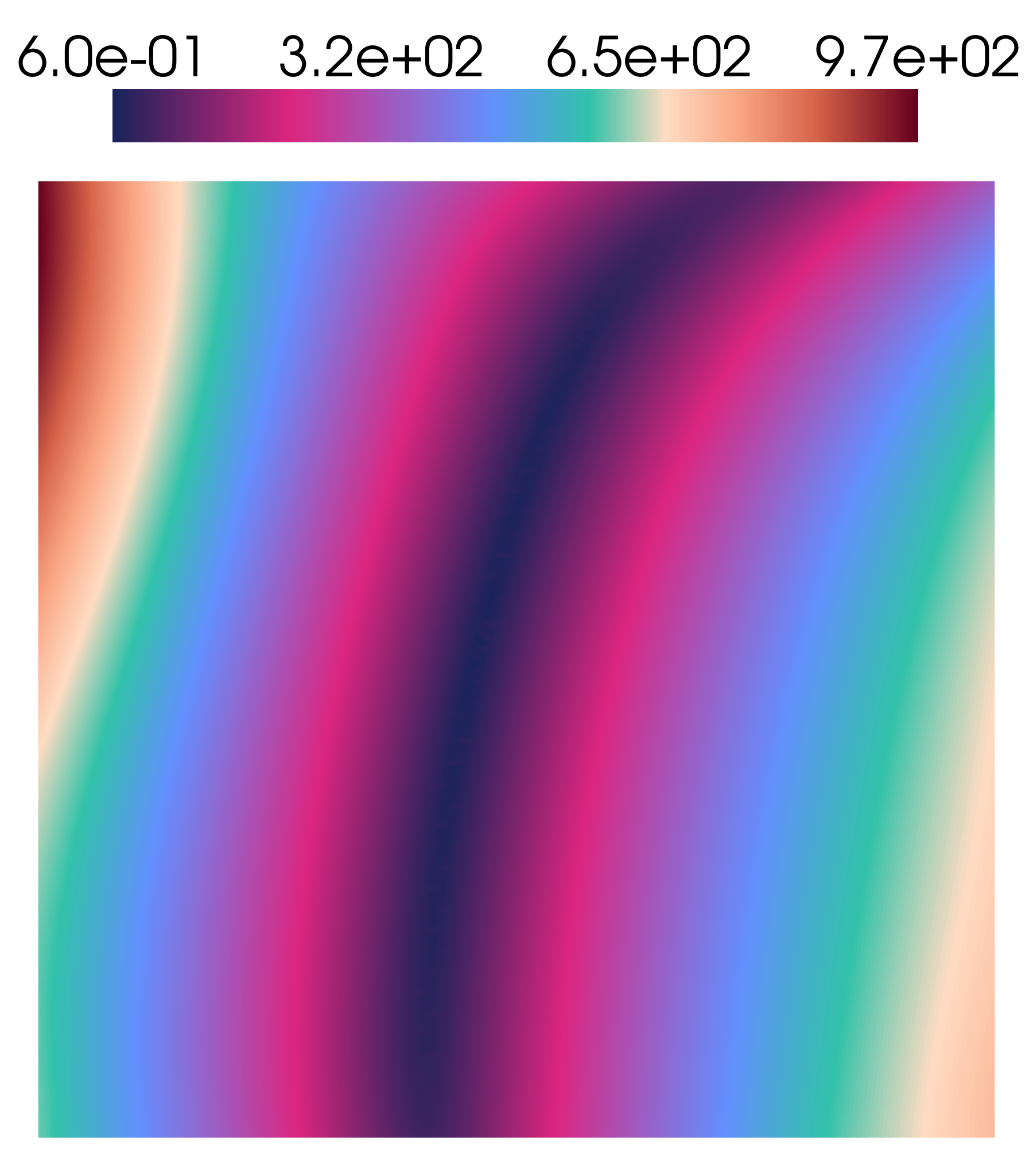}
	    \centering
	    \caption{$t=1.0,\; \mathrm{JMK}$}
    \end{subfigure}
    \begin{subfigure}[t][]{0.3\textwidth}
	    \includegraphics[width=0.9\textwidth]{%
			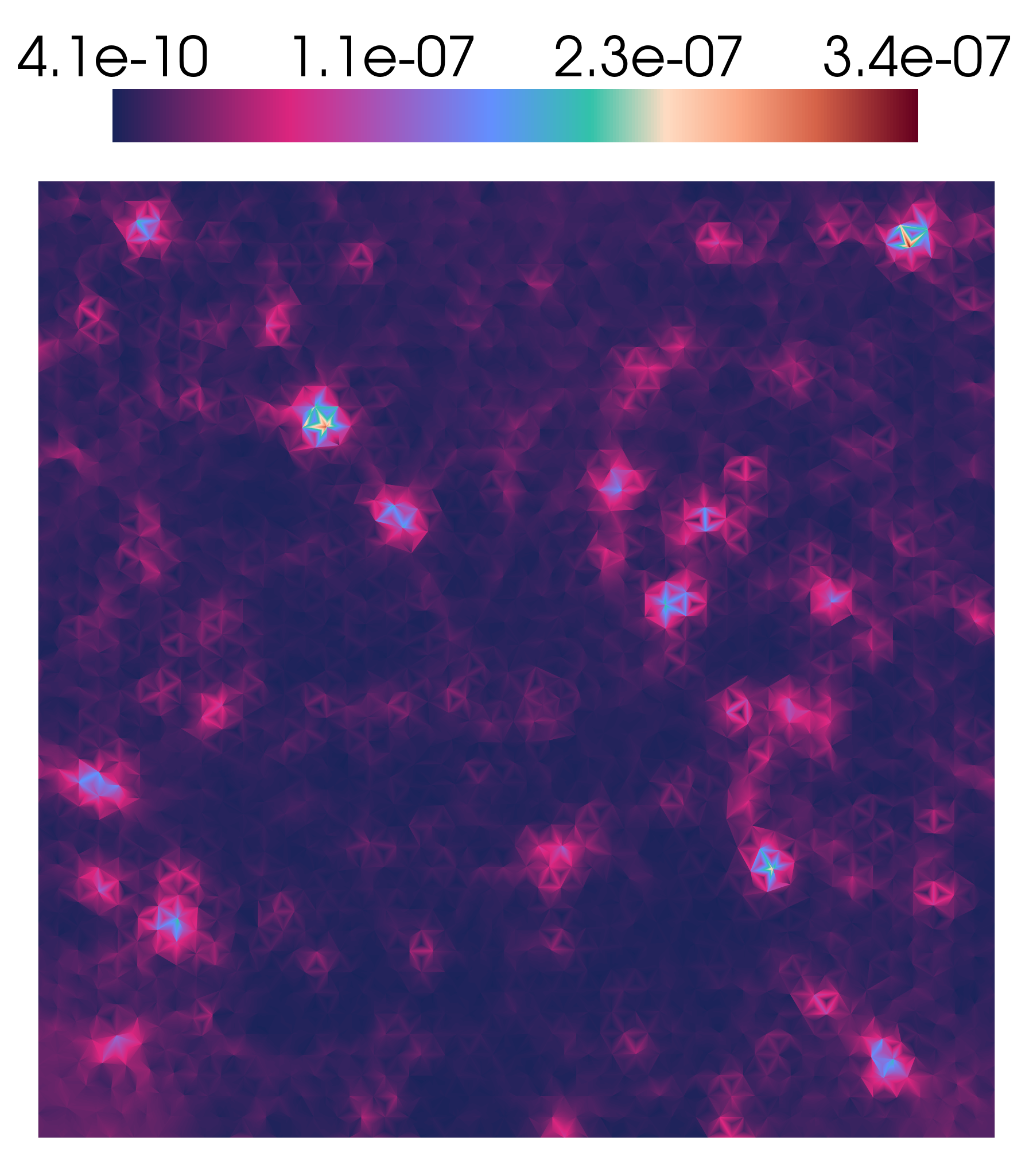}
	    \centering
	    \caption{$t=1.5,\; \mathrm{JMK}$}
    \end{subfigure} \\
    \begin{subfigure}[t][]{0.3\textwidth}
	    \includegraphics[width=0.9\textwidth]{%
			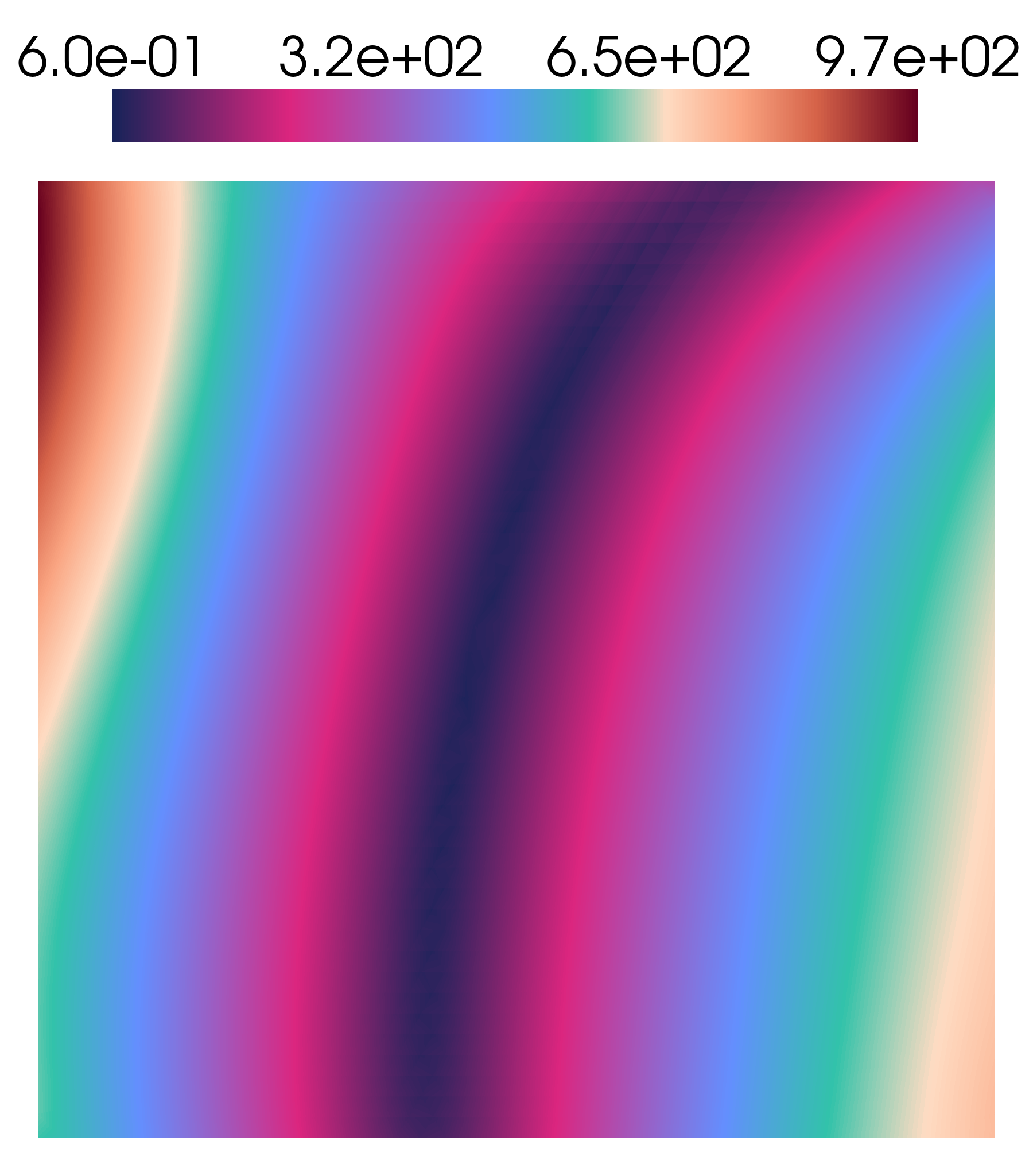}
	    \centering
	    \caption{$t=0.5,\; \mathrm{AFW}_1$}
    \end{subfigure}
	\begin{subfigure}[t][]{0.3\textwidth}
	    \includegraphics[width=0.9\textwidth]{%
			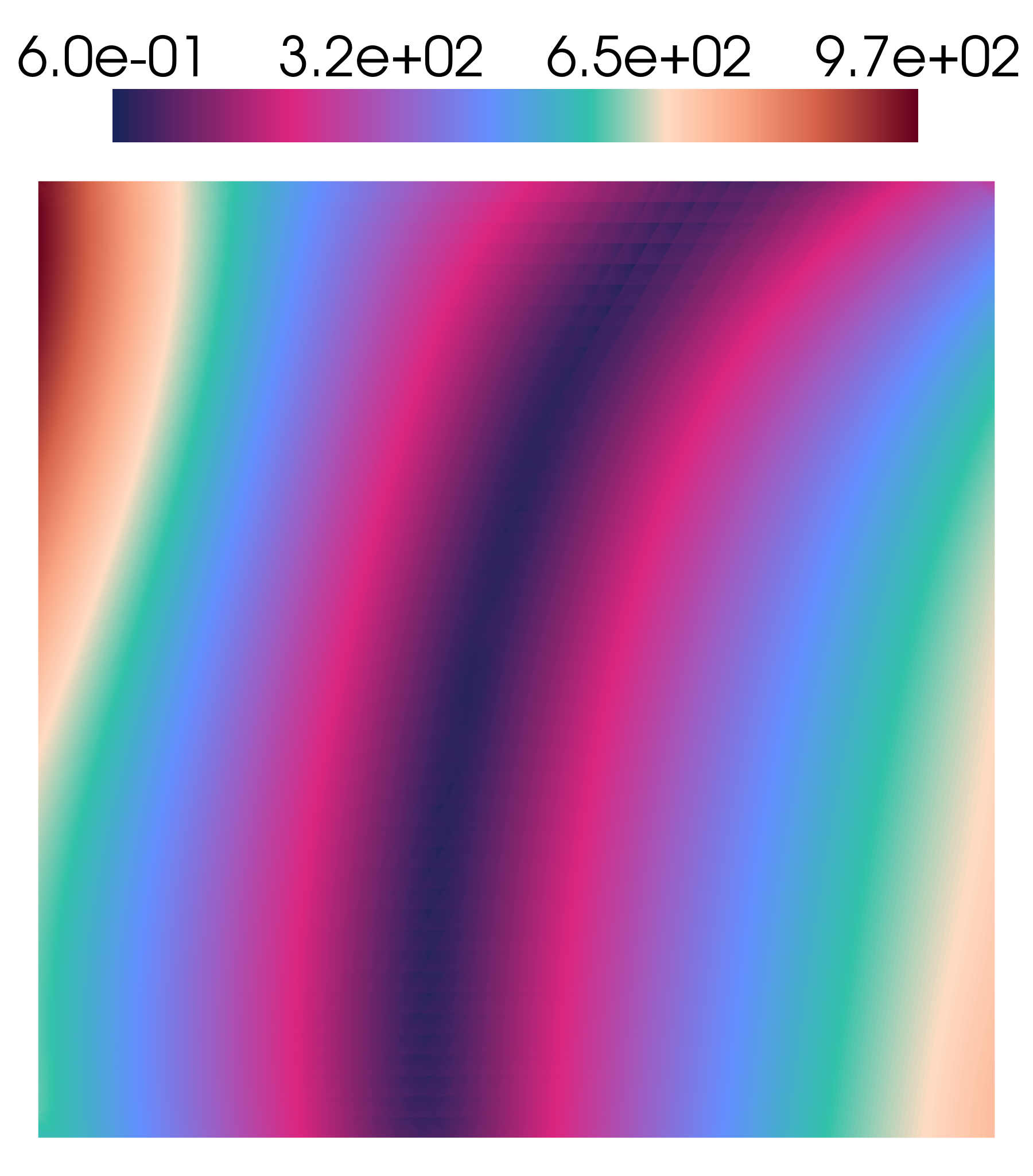}
	    \centering
	    \caption{$t=1.0,\; \mathrm{AFW}_1$}
    \end{subfigure}
	\begin{subfigure}[t][]{0.3\textwidth}
	    \includegraphics[width=0.9\textwidth]{%
			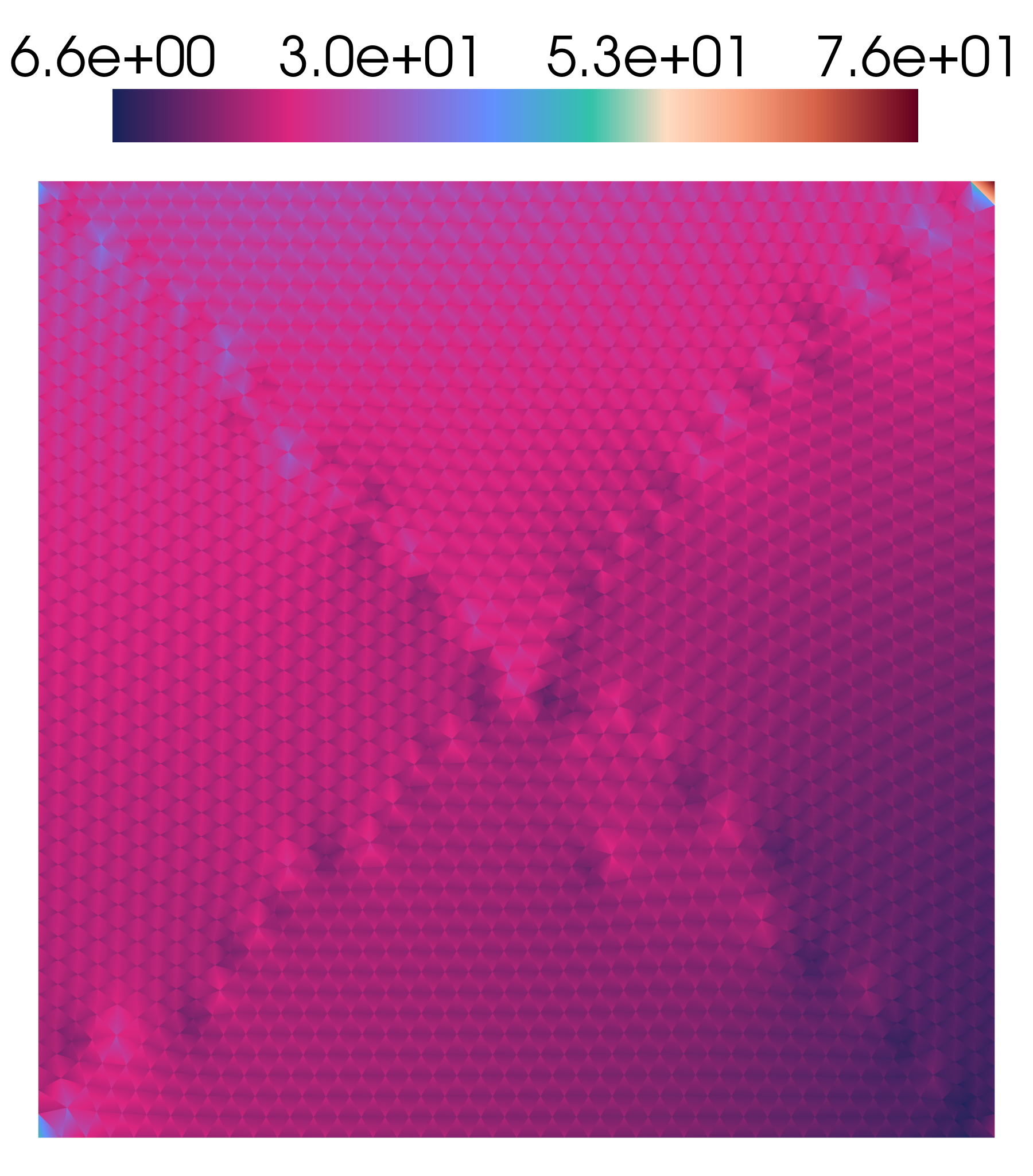}
	    \centering
	    \caption{$t=1.5,\; \mathrm{AFW}_1$}
    \end{subfigure}	
	\caption{Magnitudes of the stresses for 
	discretizations of a transient 2D polar fluid
	\cref{eq:transient-polar}. First row: the strongly 
	symmetric scheme JMK \cref{eq:time-dependent-strong}.
	Second row: the weakly symmetric scheme $\mathrm{AFW}_1$ 
	\cref{eq:time-dependent-weak}. Note that the schemes produce 
	visually identical stress magnitudes until $t=1$ (both rows have
	the same colorbar for these plots). After $t=1$, the 
	strongly symmetric scheme relaxes to a zero stress state,
	the exact steady state stress, while the weakly symmetric scheme 
	relaxes to a state far from zero. Note that the colorbars for 
	$t=1.5$ are different.}
	\label{fig:time-dependent}
\end{figure}

\appendix

\renewcommand{\thesection}{\Alph{section}} 
\makeatletter
\def\@seccntformat#1{\@ifundefined{#1@cntformat}%
	{\csname the#1\endcsname.\hspace{0.5em}}
	{\csname #1@cntformat\endcsname}}
\newcommand\section@cntformat{\appendixname\ \thesection.\hspace{0.5em}}
\makeatother

\section{Well-posedness of \cref{eq:linear_laws_weak_strong_sym} 
and \cref{eq:linear_laws_weak_weak_sym}}
\label{sec:well-posedness-mixed}

We briefly summarize the well-posedness of \cref{eq:linear_laws_weak_strong_sym} 
and \cref{eq:linear_laws_weak_weak_sym}. 
Using the notation of \cref{sec:theory},  note that the kernels of the 
$\mathcal{B}$ operators of \cref{eq:linear_laws_weak_strong_sym} 
and \cref{eq:linear_laws_weak_weak_sym} take the form
\begin{align*}
	\{ \mat{\tau} \in \Sigma^{\sym} : 
		 	(\div \mat{\tau}, \vec{v})_{L^2(\Omega)} = 0 
			\ \forall \vec{v} \in V \} 
		&= \{ \mat{\tau} \in  \Sigma^{\sym} : \div \mat{\tau} \equiv \vec{0} \},
	\\
	\{ \mat{\tau} \in \Sigma : 
			(\div \mat{\tau}, \vec{v})_{L^2(\Omega)} + 
			(\mat{\tau}, \mat{\xi} )_{L^2(\Omega)} = 0 
			\ \forall \vec{v} \in V, \ \forall \mat{\xi} \in \Xi \} 
		&= \{ \mat{\tau} \in  \Sigma^{\sym} : \div \mat{\tau} \equiv \vec{0} \}.
\end{align*}
The following result shows that the bilinear form $a(\cdot,\cdot)$ 
defined in \cref{eq:a-bilinear} is elliptic on the kernel.

\begin{lemma}
	\label{lem:a-elliptic-elasticity}
	For all $\mat{\tau} \in H(\div; \Omega, \mathbb{R}^{d \times d})$,
	there holds
	\begin{align}
		\label{eq:korn-deviatoric}
		\left\| \mat{\tau} \right\|_{L^2(\Omega)} 
		\leq C \left( \| \mat{\tau}^D \|_{L^2(\Omega)}  
			+ \| \div \mat{\tau}\|_{L^2(\Omega)}
			+ \left| \int_{\Omega} \tr \mat{\tau} \d{x} \right| \right).
	\end{align}
	Thus, for all $\mat{\tau} \in \Sigma$ with 
	$\div \mat{\tau} \equiv \vec{0}$, there holds 
	\begin{align}
		\label{eq:a-elliptic-elasticity}
		\frac{1}{2\mu} \| \mat{\tau}\|_{\div}^2 \leq
			C_{\lambda} a(\mat{\tau}, \mat{\tau}) \qquad \text{where} \quad 
			C_{\lambda} := \begin{cases}
				C & \text{if } \lambda = \infty, \\
				1 + \frac{d \lambda}{2\mu} & \text{if } 
				\lambda \in [0, \infty).
			\end{cases},
	\end{align}
	where $C$ is independent of $\mu$ and $\lambda$.
\end{lemma}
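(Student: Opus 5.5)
The inequality \cref{eq:korn-deviatoric} is the classical ``deviatoric'' estimate (cf.\ \cite[Proposition 9.1.1]{BoffiBrezziFortin13}), and the plan is to prove it by splitting $\mat{\tau}$ into its deviatoric and trace parts. Since $\mat{\tau} = \mat{\tau}^D + d^{-1}(\tr\mat{\tau})\mat{\mathbb{I}}$, it suffices to bound $\|\tr \mat{\tau}\|$. Write $\tr\mat{\tau} = q + m$ with $m := |\Omega|^{-1}\int_\Omega \tr\mat{\tau}\d{x}$ and $q$ mean-free. The constant part is controlled by $\|m\| \le |\Omega|^{-1/2}|\int_\Omega \tr\mat{\tau}\d{x}|$.

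For the mean-free part I will use the surjectivity of the divergence (Bogovski\u{\i} / Ne\v{c}as inequality) on the Lipschitz domain. It gives $\vec{v}\in H^1_0(\Omega;\mathbb{R}^d)$ with $\div\vec{v} = q$ and $\|\vec{v}\|_{H^1}\le C\|q\|$. Then $(\tr\mat{\tau}, q) = (\mat{\tau}, (\div \vec{v})\mat{\mathbb{I}})$. To pass to $\nabla\vec{v}$, I rewrite this as
\begin{align*}
d^{-1}(\tr\mat{\tau},\tr\nabla\vec{v}) = (\mat{\tau},\nabla\vec{v}) - (\mat{\tau}^D,\nabla\vec{v}),
\end{align*}
using $\mat{\tau}:\nabla\vec{v} = \mat{\tau}^D:\nabla\vec{v} + d^{-1}\tr\mat{\tau}\,\div\vec{v}$.

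Since $\vec{v}$ vanishes on $\partial\Omega$, integration by parts gives $(\mat{\tau},\nabla\vec{v}) = -(\div\mat{\tau},\vec{v})$. Because $\int_\Omega q = 0$, we also have $(\tr\mat{\tau},q) = \|q\|^2$. Combining these,
\begin{align*}
\|q\|^2 \le d\,(\|\div\mat{\tau}\| + \|\mat{\tau}^D\|)\,\|\vec{v}\|_{H^1} \le C(\|\div\mat{\tau}\| + \|\mat{\tau}^D\|)\|q\|.
\end{align*}
Adding the bounds for $q$, $m$ and $\mat{\tau}^D$ yields \cref{eq:korn-deviatoric}. This is the only step needing care: the boundary term vanishes because $\vec{v}\in H^1_0$, so no constraint on $\mat{\tau}\vec{n}$ is required.

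For \cref{eq:a-elliptic-elasticity}, take $\mat{\tau}\in\Sigma$ with $\div\mat{\tau}\equiv\vec{0}$, so that $\|\mat{\tau}\|_{\div} = \|\mat{\tau}\|$.

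If $\lambda=\infty$, then $\int_\Omega\tr\mat{\tau}=0$ by the definition of $\Sigma$. The estimate \cref{eq:korn-deviatoric} then gives $\|\mat{\tau}\|^2\le C\|\mat{\tau}^D\|^2 = 2\mu C\, a(\mat{\tau},\mat{\tau})$, as required.

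If $\lambda<\infty$, no Korn-type argument is needed. By orthogonality of the deviatoric and trace parts, $\|\mat{\tau}\|^2 = \|\mat{\tau}^D\|^2 + d^{-1}\|\tr\mat{\tau}\|^2$, so
\begin{align*}
\frac{1}{2\mu}\|\mat{\tau}\|^2 = \frac{1}{2\mu}\|\mat{\tau}^D\|^2 + \frac{2\mu+d\lambda}{2\mu}\cdot\frac{1}{d(2\mu+d\lambda)}\|\tr\mat{\tau}\|^2 \le \Bigl(1+\frac{d\lambda}{2\mu}\Bigr)a(\mat{\tau},\mat{\tau}).
\end{align*}
Here we used $1\le 1+d\lambda/(2\mu)$ on the deviatoric term.
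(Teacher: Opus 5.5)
Your proof is correct and follows the paper's argument. The paper obtains \cref{eq:korn-deviatoric} by citing \cite[Proposition 9.1.1]{BoffiBrezziFortin13} plus the triangle inequality, and your Bogovski\u{\i}/Ne\v{c}as argument (splitting off the mean of $\tr\mat{\tau}$ and testing with $\vec{v}\in H^1_0$, $\div\vec{v}=q$) is the standard proof of that cited proposition; your two cases, using $\int_\Omega \tr\mat{\tau}\d{x}=0$ for $\lambda=\infty$ and the $1+d\lambda/(2\mu)$ rescaling of the trace term for $\lambda<\infty$, match the paper's, and you also handle the mean of the trace more cleanly than the paper's intermediate bound, which subtracts $\int_\Omega \tr\mat{\tau}\d{x}$ from $\mat{\tau}$ without the factor $(d|\Omega|)^{-1}\mat{\mathbb{I}}$.
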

\begin{proof}
	Let $\mat{\tau} \in H(\div; \Omega, \mathbb{R}^{d \times d})$. Thanks
	to \cite[Proposition 9.1.1]{BoffiBrezziFortin13}, there holds
	\begin{align*}
		\left\| \mat{\tau} 
			- \int_{\Omega} \tr \mat{\tau} \d{x} \right\|_{L^2(\Omega)} 
		\leq C \left( \| \mat{\tau}^D \|_{L^2(\Omega)} 
			+ \| \div \mat{\tau}\|_{L^2(\Omega)} \right),
	\end{align*}
	and so \cref{eq:korn-deviatoric} follows from the triangle inequality.
	For $\lambda = \infty$, inequality \cref{eq:a-elliptic-elasticity} 
	follows from \cref{eq:korn-deviatoric} on recalling
	that if $\lambda = \infty$ and $\tau \in \Sigma$, then 
	$\int_{\Omega} \tr \mat{\tau} \d{x} = 0$. For $\lambda \in [0, \infty)$,
	we have 
	\begin{align*}
		\frac{1}{2\mu} \tau : \tau 
			= \frac{1}{2\mu} \tau^D : \tau^D 
				+ \frac{1}{2\mu d} (\tr \tau)^2 
			= \frac{1}{2\mu} \tau^D : \tau^D 
				+ \left(1 + \frac{d \lambda}{2\mu} \right) 
					\frac{1}{d(2\mu + d\lambda)} (\tr \tau)^2,
	\end{align*}
	and so \cref{eq:a-elliptic-elasticity} follows on integrating over $\Omega$.
\end{proof}
The remaining ingredients for well-posedness are the inf-sup conditions
\begin{align}
	\beta^{\sym} := \inf_{\vec{v} \in V} 
		\sup_{\mat{\tau} \in \Sigma^{\sym}}
		\frac{b(\mat{\tau}, \vec{v})}{ \| \mat{\tau} \|_{\div} 
			\|\vec{v}\| }
	 > 0 
	 \quad \text{and} \quad 
	 \beta := \inf_{\substack{\vec{v} \in V \\ \mat{\xi} \in \Xi}} 
		\sup_{\mat{\tau} \in \Sigma}
		\frac{b(\mat{\tau}, \vec{v}) + c(\mat{\tau}, \mat{\xi})}{ 
			\| \mat{\tau} \|_{\div} (\|\vec{v}\| + \|\mat{\omega}\|) }
	 > 0.
\end{align}
That $\beta^{\sym} > 0$ follows from the existence of a bounded right
inverse of the divergence operator, which we 
briefly outline here. Let $\vec{v} \in V$ and extend $\vec{v}$
to be zero outside $\Omega$. Let $B \supset \Omega$ be a sufficiently
large ball and let $\vec{u} \in \vec{H}^1_0(B)$ be the weak solution to 
following PDE:
\begin{align*}
	\div \symgrad(\vec{u}) = \vec{v} \quad \text{in } B 
	\quad \text{and} \quad \vec{u}|_{\partial B} = \vec{0}. 
\end{align*}
By elliptic regularity (see e.g. \cite[p. 263, Lemma 3.2]{Necas12}), 
$\vec{u} \in \vec{H}^2(B)$ with 
$\|\vec{u}\|_{H^2(B)} \leq C \|\vec{v}\|_{L^2(\Omega)}$, and so 
$\div \symgrad(\vec{u}) = \vec{v}$. Taking 
$\mat{\tau} = \symgrad(\vec{u})$ then satisfies
$\div \mat{\tau} = \vec{v}$ and 
$\|\mat{\tau}\|_{\div} \leq C\|\vec{v}\|$, and so $\beta^{\sym} > C^{-1}$. 
The proof that $\beta > 0$ is more involved and can be found in 
\cite[Proposition 9.3.2]{BoffiBrezziFortin13}.

In the literature, the assumption that $\vec{g} \equiv \vec{0}$ in 
\cref{eq:linear_u_dirichlet} is often made for linear elasticity problems
where also $\mat{F} \equiv \mat{0}$. In this case, one can test
\cref{eq:linear_laws_weak_strong_sym_1} or 
\cref{eq:linear_laws_weak_weak_sym_1} with $\mat{\tau} = \mat{\mathbb{I}}$
to see that $\int_{\Omega} \tr \mat{\tau} \d{x} = 0$. Many authors
therefore make the choice 
$\Sigma^{\sym} = H_*(\div; \Omega, \mathbb{R}_{\sym}^{d \times d})$
and $\Sigma = H_*(\div; \Omega, \mathbb{R}^{d \times d})$
so that the coercivity estimate in \cref{eq:a-elliptic-elasticity} 
is independent of $\lambda$.
The resulting schemes are then claimed to be ``locking-free,''
as the error estimates can be shown to be well-behaved as 
$\lambda \to \infty$. However, this choice of $\Sigma^{\sym}$
and $\Sigma$ only remains valid more generally if one assumes that
$\int_{\partial \Omega} \vec{g} \cdot \vec{n} \d{s} 
= -\int_{\Omega} \tr \mat{F} \d{x}$
for $\lambda < \infty$ (recall that this is precisely the compatibility
condition for $\lambda = \infty$). If $\vec{g}$ and $\mat{F}$ do
not satisfy this compatibility condition, then one must
take $\Sigma^{\sym}$ and $\Sigma$ as in \cref{eq:sigma-sym-v-def}
and \cref{eq:sigma-xi-def} and the constant 
$C_{\lambda}$ in the coercivity estimate \cref{eq:a-elliptic-elasticity}
blows up as $\lambda \to \infty$. Consequently, one obtains error estimates
that are not uniform in $\lambda$.

\section{Linear solvers used in \cref{sec:examples}}
\label{sec:linear-solvers}

To solve the linear systems arising from 
\cref{eq:linear_laws_weak_strong_sym_fem}
and \cref{eq:linear_laws_weak_weak_sym_fem}, which are symmetric,
we employ MINRES with a standard symmetric block-diagonal
preconditioning strategy. For the strongly symmetric elements
\cref{eq:linear_laws_weak_strong_sym_fem}, consider the bilinear form 
\begin{align}
	\label{eq:al-precon}
	a(\mat{\sigma}_h, \mat{\tau}_h) 
		+ \frac{\mathds{1}_{\lambda > 10^{10}}}{200 d \mu}
			(\tr \mat{\sigma}_h, \tr \mat{\tau}_h)_{L^2(\Omega)}
		+ \frac{\gamma}{2\mu} 
			(\div \mat{\sigma}_h, \div \mat{\tau}_h )_{L^2(\Omega)}
		+ \frac{2\mu}{\gamma} (\vec{u}_h, \vec{v}_h)_{L^2(\Omega)},
\end{align}
where $\mathds{1}$ is the indicator function and $\gamma$ is parameter
that we take to be $10^3$. Note that for $\lambda \leq 10^{10}$, this 
preconditioner corresponds to a standard augmented Lagrangian preconditioner
\cite{BenziOlshanskii06,Hiptmair96,Vassilevski92}. 
The additional term for  $\lambda > 10^{10}$ ensures
that form in invertible in floating point arithmetic for 
$\lambda \in (10^{10}, \infty]$. For the weakly symmetric
elements \cref{eq:linear_laws_weak_weak_sym_fem}, we add 
the term $2\mu(\mat{\omega}_h, \mat{\xi}_h)_{L^2(\Omega)}$ to
\cref{eq:al-precon}. The resulting preconditioner corresponds to a 
mix of augmented Lagrangian preconditioning and operator preconditioning 
\cite{Hiptmair06,Kirby10,MardalWinther11}. 
Instead of using \cref{eq:al-precon} directly, we apply 
one multigrid V-cycle with vertex patch preconditioner analogous to 
\cite{ArnoldFalkWinther00} for the stress terms and invert the 
displacement/velocity mass matrix directly.
We terminate MINRES when the preconditioned residual decreases by a factor of 
$10^{14}$.

For the case $\lambda = \infty$, we do not impose the constraint 
$\int_{\Omega} \tr \mat{\tau}_h \d{x} = 0$ in the implementation.
Consequently, the systems \cref{eq:linear_laws_weak_strong_sym_fem}
and \cref{eq:linear_laws_weak_weak_sym_fem} will have a one dimensional
nullspace and $\mat{\sigma}_h$ is unique up to a scalar multiple of 
$\mat{\mathbb{I}}$. 
Nevertheless, the preconditioner \cref{eq:al-precon} is
invertible, and MINRES still converges. In this case, we also monitor
the inexactness condition in \cite{LiuRoosta23} with tolerance $10^{-14}$.
We post-process the final MINRES iterate to
satisfy 
$\int_{\Omega} \tr \mat{\sigma}_h \d{x} = \int_{\Omega} \tr \mat{\sigma} \d{x}$.

\section{Pressure-robustness}
\label{sec:pressure-robustness}

As mentioned in \cref{sec:theory-general-comments}, the 
structure-preserving property \cref{eq:structure-preserving-def}
is typically called ``pressure robustness'' in the context of 
incompressible flow \cite{JohnLinkeMerdonNeilanRebholz17}. Here,
we show that one obtains similar numerical results as in 
\cref{sec:examples} for Example 1.1 in \cite{JohnLinkeMerdonNeilanRebholz17}.
In particular, consider the following Stokes problem on $\Omega = (0, 1)^2$: 
Find $\vec{u} \in H^1_0(\Omega; \mathbb{R}^2)$ and $p \in L^2_0(\Omega)$ such that 
\begin{subequations}
	\label{eq:stokes}
	\begin{alignat}{2}
		(\symgrad(\vec{u}), \symgrad(\vec{v}))_{L^2(\Omega)} 
			- (\div \vec{v}, p)_{L^2(\Omega)}  
				&= (\vec{f}, \vec{v})_{L^2(\Omega)}
			\qquad & &\forall \vec{v} \in H^1_0(\Omega; \mathbb{R}^2), \\
		-(\div \vec{u}, q)_{L^2(\Omega)} &= 0 
			\qquad & &\forall q \in L^2_0(\Omega),
	\end{alignat}
\end{subequations}
where the forcing term $\vec{f}$ is chosen such that the exact solution is 
given by
\begin{equation*}
      \vec{f} =
      \begin{pmatrix}
        0 \\
        \Ra(1-y+3y^2)
      \end{pmatrix}, \qquad
    \vec{u} = \begin{pmatrix}
        0 \\
        0
    \end{pmatrix}, \qquad
    p = \Ra\left(y^3-\frac{1}{2}y^2 + y - \frac{7}{12}\right)
\end{equation*}
for a positive parameter $\Ra > 0$. Here, $L^2_0(\Omega)$ denotes
mean-free $L^2(\Omega)$ functions. Note that 
\begin{align*}
	(\vec{f}, \vec{v})_{L^2(\Omega)} = -(\div \vec{v}, p)_{L^2(\Omega)},
\end{align*}
which is precisely of the form \cref{eq:saddle-perturbed} with 
$F \equiv 0$ and $G \equiv 0$. 

We mesh $\Omega$ as in \cref{sec:examples} with $\texttt{maxh}=1/8$.
We discretize \cref{eq:stokes} with the lowest-order Hood--Taylor (HT) 
element
$\mathcal{CG}_2(\mathcal{T}_h)^2 \times \mathcal{CG}_1(\mathcal{T}_h)$
and the quadratic Scott--Vogelius (SV) element
$\mathcal{CG}_2(\mathcal{T}_h^{\text{bary}})^2 
\times \mathcal{P}_1(\mathcal{T}_h^{\text{bary}})$,
where $\mathcal{T}_h^{\text{bary}}$ is the barycentric refinement of the mesh. 
Both elements are inf-sup stable, but the HT elements are not 
structure-preserving (pressure-robust), while the SV elements are 
structure-preserving.
We note that numerical results for the HT element also appear in 
\cite{JohnLinkeMerdonNeilanRebholz17}, but a structure-preserving 
method in the sense of \cref{eq:structure-preserving-def} was not considered 
for this example in \cite{JohnLinkeMerdonNeilanRebholz17}.
The direct LU solver MUMPS \cite{AmestoyDuff01} is used to invert the linear 
systems.
The numerical results for $\Ra \in \{ 10, 10^3, 10^5\}$
in \cref{fig:pressure-robustness} are analogous to the results in 
\cref{sec:examples}: The structure-preserving SV elements 
produce small velocity errors that are on the order of machine 
precision for $\Ra = 1$ and scale linearly in $\Ra$, while 
the HT elements produce nonnegligible velocity errors that also scale with $\Ra$,
but converge to zero quadratically as the mesh is refined. Note that 
the theory in \cref{sec:theory} applies here as well, and so the SV element
produces a zero velocity in exact arithmetic. However, the effect of roundoff 
errors and scaling the right-hand side of the linear system by $\Ra$ together 
produce the behavior in \cref{fig:pressure-robustness}.

\begin{figure}[htbp]
	\centering
	\begin{tikzpicture}
		\centering
		\begin{groupplot}[%
			group style={group size=3 by 2, 
				         horizontal sep=45pt, 
						 vertical sep=50pt},
			width=0.32\linewidth,
			height=0.32\linewidth,
			domain=2:5, 
			xtick={0,1,2,3,4,5}, 
			ymode=log, 
			xlabel={refinement level},
			ymajorgrids=true,
			grid style=dashed,
			every axis plot/.append style={line width=1.1pt},
			cycle list={
				{mark=*, alpha1, mark options={fill=alpha1}},
				{mark=triangle*, alpha2, mark options={fill=alpha2}},
				{mark=square*, alpha3, mark options={fill=alpha3}},
				{mark=diamond*, alpha4, mark options={fill=alpha4}},
				{mark=pentagon*, alpha5, mark options={fill=alpha5}}
			},
		]
		\nextgroupplot[ylabel={SV}]
		\addplot+[discard if not={Ra}{10.0}] 
				table [x=ref, y=velocity_error, col sep=comma] 
				{data/no_flow_sv.csv};
				\label{plot:no-flow-sv-vel-alpha1}
		\addplot+[discard if not={Ra}{1000.0}] 
				table [x=ref, y=velocity_error, col sep=comma] 
				{data/no_flow_sv.csv};
				\label{plot:no-flow-sv-vel-alpha2}
		\addplot+[discard if not={Ra}{100000.0}] 
				table [x=ref, y=velocity_error, col sep=comma] 
				{data/no_flow_sv.csv};
				\label{plot:no-flow-sv-vel-alpha3}
		\nextgroupplot
		\addplot+[discard if not={Ra}{10.0}] 
				table [x=ref, y=pressure_error, col sep=comma] 
				{data/no_flow_sv.csv};
				\label{plot:no-flow-sv-pre-alpha1}
		\addplot+[discard if not={Ra}{1000.0}] 
				table [x=ref, y=pressure_error, col sep=comma] 
				{data/no_flow_sv.csv};
				\label{plot:no-flow-sv-pre-alpha2}
		\addplot+[discard if not={Ra}{100000.0}] 
				table [x=ref, y=pressure_error, col sep=comma] 
				{data/no_flow_sv.csv};
				\label{plot:no-flow-sv-pre-alpha3}
		\nextgroupplot
		\addplot+[discard if not={Ra}{10.0}] 
				table [x=ref, y=divergence_error, col sep=comma] 
				{data/no_flow_sv.csv};
				\label{plot:no-flow-sv-div-alpha1}
		\addplot+[discard if not={Ra}{1000.0}] 
				table [x=ref, y=divergence_error, col sep=comma] 
				{data/no_flow_sv.csv};
				\label{plot:no-flow-sv-div-alpha2}
		\addplot+[discard if not={Ra}{100000.0}] 
				table [x=ref, y=divergence_error, col sep=comma] 
				{data/no_flow_sv.csv};
				\label{plot:no-flow-sv-div-alpha3}
		\nextgroupplot[ylabel={HT}]
		\addplot+[discard if not={Ra}{10.0}] 
				table [x=ref, y=velocity_error, col sep=comma] 
				{data/no_flow_ht.csv};
				\label{plot:no-flow-ht-vel-alpha1}
		\addplot+[discard if not={Ra}{1000.0}] 
				table [x=ref, y=velocity_error, col sep=comma] 
				{data/no_flow_ht.csv};
				\label{plot:no-flow-ht-vel-alpha2}
		\addplot+[discard if not={Ra}{100000.0}] 
				table [x=ref, y=velocity_error, col sep=comma] 
				{data/no_flow_ht.csv};
				\label{plot:no-flow-ht-vel-alpha3}
		\nextgroupplot
		\addplot+[discard if not={Ra}{10.0}] 
				table [x=ref, y=pressure_error, col sep=comma] 
				{data/no_flow_ht.csv};
				\label{plot:no-flow-ht-pre-alpha1}
		\addplot+[discard if not={Ra}{1000.0}] 
				table [x=ref, y=pressure_error, col sep=comma] 
				{data/no_flow_ht.csv};
				\label{plot:no-flow-ht-pre-alpha2}
		\addplot+[discard if not={Ra}{100000.0}] 
				table [x=ref, y=pressure_error, col sep=comma] 
				{data/no_flow_ht.csv};
				\label{plot:no-flow-ht-pre-alpha3}
		\nextgroupplot
		\addplot+[discard if not={Ra}{10.0}] 
				table [x=ref, y=divergence_error, col sep=comma] 
				{data/no_flow_ht.csv};
				\label{plot:no-flow-ht-div-alpha1}
		\addplot+[discard if not={Ra}{1000.0}] 
				table [x=ref, y=divergence_error, col sep=comma] 
				{data/no_flow_ht.csv};
				\label{plot:no-flow-ht-div-alpha2}
		\addplot+[discard if not={Ra}{100000.0}] 
				table [x=ref, y=divergence_error, col sep=comma] 
				{data/no_flow_ht.csv};
				\label{plot:no-flow-ht-div-alpha3}	
		\end{groupplot}	
		\node at ($(group c1r2.south) + (0,-2)$) 
			{(A) $\|\vec{u}-\vec{u}_h\|_{1}$};
		\node at ($(group c2r2.south) + (0,-2)$) {(B) $\|p-p_h\|$};
		\node at ($(group c3r2.south) + (0,-2)$) 
			{(C) $\|\div\vec{u}_h\|$};
	\end{tikzpicture}
	\caption{Numerical results for the Stokes problem \cref{eq:stokes}
	for $\mathrm{Ra} = 10$ (\ref*{plot:no-flow-ht-vel-alpha1}),
		$\mathrm{Ra} = 10^3$ (\ref*{plot:no-flow-ht-vel-alpha2}),
		and $\mathrm{Ra} = 10^5$ (\ref*{plot:no-flow-ht-vel-alpha3})
	with the SV (first row) and HT (second row) schemes.
	(A) the velocity errors in $H^1(\Omega; \mathbb{R}^2)$,
	(B) the pressure errors in $L^2(\Omega)$, and 
	(C) the divergence errors of the velocity in $L^2(\Omega)$.
	Observe that the non-structure-preserving scheme HT produces significantly 
	larger velocity errors in both (A) and (C) than the structure-preserving 
	SV scheme.}
	\label{fig:pressure-robustness}
\end{figure}

\bibliographystyle{elsarticle-num-names}
\bibliography{references}
\end{document}